\numberwithin{equation}{section}
\theoremstyle{plain}
\newtheorem{thm}{Theorem}[section]
\newtheorem{lemma}[thm]{Lemma}
\newtheorem{prop}[thm]{Proposition}
\newtheorem{cor}[thm]{Corollary}
\newtheorem{conj}[thm]{Conjecture}
\theoremstyle{definition}
\newtheorem{definition}[thm]{Definition}
\newtheorem{example}[thm]{Example}
\theoremstyle{remark}
\newtheorem{rmk}[thm]{Remark}
\DeclareMathOperator{\G}{G}
\DeclareMathOperator{\IG}{IG}
\DeclareMathOperator{\OG}{OG}
\DeclareMathOperator{\QH}{QH}
\DeclareMathOperator{\Ext}{Ext}
\DeclareMathOperator{\RHom}{RHom}
\DeclareMathOperator{\Jac}{Jac}
\DeclareMathOperator{\Pic}{{\mathrm Pic}}
\DeclareMathOperator{\FS}{FS}
\newcommand{\bA}{{\mathbb A}}
\newcommand{\bL}{{\mathbb L}}
\newcommand{\bR}{{\mathbb R}}
\newcommand{\bP}{{\mathbb P}}
\newcommand{\bQ}{{\mathbb Q}}
\newcommand{\bZ}{{\mathbb Z}}
\renewcommand{\SS}{{\mathbb S}}
\newcommand{\Db}{{\mathbf D^{\mathrm{b}}}}
\newcommand{\fa}{{\mathfrak{a}}}
\newcommand{\fanp}{{\fa_\nu^+}}
\newcommand{\famp}{{\fa_\mu^+}}
\newcommand{\famm}{{\fa_\mu^-}}
\newcommand{\cA}{\mathcal{A}}
\newcommand{\cC}{\mathcal{C}}
\newcommand{\cO}{\mathcal{O}}
\newcommand{\cR}{\mathcal{R}}
\newcommand{\cU}{\mathcal{U}}
\newcommand{\cV}{\mathcal{V}}
\newcommand{\cT}{\mathcal{T}}
\newcommand{\rC}{\mathrm{C}}
\newcommand{\rS}{\mathrm{S}}
\newcommand{\sY}{\mathsf{Y}}
\newcommand{\sYu}{\mathsf{Y}^{\mathrm{u}}}
\newcommand{\sYmu}{\mathsf{Y}^{\mathrm{mu}}}
\newcommand{\bmu}{\boldsymbol{\mu}}
\newcommand{\num}{c}
\begin{document}

\title{On residual categories for Grassmannians}

\author{Alexander Kuznetsov}

\address{
\parbox{0.95\textwidth}{
Algebraic Geometry Section, Steklov Mathematical Institute of Russian Academy of Sciences,
8 Gubkin str., Moscow 119991 Russia
\\[3pt]
Interdisciplinary Scientific Center J.-V. Poncelet (CNRS UMI 2615), Moscow
\\[3pt]
Laboratory of Algebraic Geometry, National Research University Higher School of Economics, Moscow
\bigskip
}
}

\email{akuznet@mi-ras.ru}

\author{Maxim Smirnov}
\address{
Universit\"at Augsburg,
Institut f\"ur Mathematik,
Universit\"atsstr.~14,
86159 Augsburg,
Germany
}
\email{maxim.smirnov@math.uni-augsburg.de}

\dedicatory{To the memory of Boris~Dubrovin}

\maketitle

\bigskip

\begin{abstract}
We define and discuss some general properties of residual categories of Lefschetz decompositions in triangulated categories.
In the case of the derived category of coherent sheaves on the Grassmannian $\G(k,n)$ we conjecture that the residual
category associated with Fonarev's Lefschetz exceptional collection is generated by a completely orthogonal exceptional collection.
We prove this conjecture for $k = p$, a prime number, modulo completeness of Fonarev's collection
(and for $p = 3$ we check this completeness).
\end{abstract}

\section{Introduction}

Let $X$ be a smooth projective variety over a field $\Bbbk$ and let $\Db(X)$ be its bounded derived category of coherent sheaves.
The study of semiorthogonal decompositions of~$\Db(X)$ has received a lot of attention in the past decades.
A particular instance of a semiorthogonal decomposition is a full exceptional collection.

\begin{definition}
A collection of objects $(E_1, \dots , E_n)$ in $\Db(X)$ is called \textsf{exceptional} if
\begin{align*}
&\RHom(E_i, E_i) = \Bbbk \quad \text{for all }\, i , \\
&  \RHom(E_i, E_j) = 0 \quad \text{for } i>j.
\end{align*}
An exceptional collection $(E_1, \dots , E_n)$ in $\Db(X)$ is called \textsf{full},
if the smallest full triangulated subcategory containing all $E_i$ is $\Db(X)$.
In this case we write
\begin{equation*}
\Db(X) = \big \langle E_1 , \dots , E_n \big \rangle.
\end{equation*}
\end{definition}

\begin{example}

The simplest, and historically first, example of a variety $X$ admitting an exceptional collection is a projective space.
In his celebrated paper \cite{Be}, A.~Beilinson showed that the lines bundles $(\cO, \cO(1), \dots , \cO(n))$
form a full exceptional collection in~$\Db(\bP^n)$, i.e.
\begin{equation}
\label{eq:db-pn}
\Db(\bP^n) = \big \langle \cO , \cO(1), \dots , \cO(n) \big \rangle.
\end{equation}
\end{example}

Many people have contributed to the further development of this subject.
We refrain from giving more details here and refer the interested reader to \cite{Hu, Ku14} and references therein.
In what follows we assume some familiarity with the subject.

In this paper we are interested in a special type of exceptional collections that was introduced in~\cite{Ku07,Ku08a}.
Let $X$ be a smooth projective variety over a field and let~$\cO_X(1)$ be a line bundle on $X$.

\begin{definition}\label{Def.: Lefschetz collections}

(i) A \textsf{Lefschetz collection} with respect to $\cO_X(1)$ is an exceptional collection, which has a block structure
\begin{equation*}
\underbrace{E_1 , E_2, \dots, E_{\sigma_0}}_{\text{block 0}}; \,
\underbrace{E_1(1) , E_2(1), \dots, E_{\sigma_1}(1)}_{\text{block 1}}; \,
\dots; \,
\underbrace{E_1(m-1) , E_2(m-1), \dots, E_{\sigma_{m-1}}(m-1)}_{\text{block $m-1$}}
\end{equation*}
where $\sigma = (\sigma_0 \ge \sigma_1 \ge \dots \ge \sigma_{m-1} \ge 0$) is a non-increasing sequence of non-negative integers
called the \textsf{support partition} of the collection. Semicolons are used in the above notation to separate the blocks.
The block $(E_1 , E_2, \dots, E_{\sigma_0})$ is called \textsf{the starting block}. We use notation $(E_\bullet,\sigma)$ for a Lefschetz collection with support partition $\sigma$.

(ii)
If $\sigma_0 = \sigma_1 = \dots = \sigma_{m-1}$, then the corresponding Lefschetz collection is called \textsf{rectangular}.
Otherwise, its \textsf{rectangular part} is the subcollection
\begin{multline*}
E_1, E_2, \dots, E_{\sigma_{m-1}};\ E_1(1), E_2(1), \dots, E_{\sigma_{m-1}}(1);\ \dots;\
\\
E_1(m-1), E_2(m-1), \dots, E_{\sigma_{m-1}}(m-1).
\end{multline*}

(iii)
The subcategory of $\Db(X)$ orthogonal to the rectangular part of a given Lefschetz collection
is called its \textsf{residual category}:
\begin{equation*}
\cR_{E_\bullet} =
\Big\langle
E_1, E_2, \dots, E_{\sigma_{m-1}};\ \dots;\ E_1(m-1), E_2(m-1), \dots, E_{\sigma_{m-1}}(m-1)
\Big\rangle^\perp.
\end{equation*}
The residual category is zero if and only if $(E_{\bullet}, \sigma)$ is full and rectangular.
\end{definition}

Later, in Definition~\ref{definition:residual} we will give a generalization of these notions to the more general case of Lefschetz decompositions of polarized triangulated categories, but for the purposes of the Introduction the above definition is sufficient.

A Lefschetz collection is obviously determined by its starting block
and its support partition $\sigma$. It is not straightforwardly evident, but true (see~\cite[Lemma~2.18]{Ku08b})
that it is even determined by its starting block only (so that the support partition can be recovered from it);
although it is very far from being true that each exceptional collection extends to a Lefschetz one.
Anyway, with respect to inclusion of the starting block, the set of Lefschetz collections
(with fixed line bundle $\cO_X(1)$) is partially ordered, and one can give the following definition.

\begin{definition}
A Lefschetz collection is called \textsf{minimal}, if it is minimal
with respect to the partial order given by inclusion of starting blocks.
\end{definition}

\begin{example}
\label{example:pn}
The simplest example of a Lefschetz collection is given by the exceptional collection~\eqref{eq:db-pn} on the projective space $\bP^n$.
Here the starting block is $(\cO)$, the support partition is $\sigma = (1,1, \dots, 1)$,
and so the collection is rectangular and minimal, and the residual category vanishes.
\end{example}

Let $T$ be the tangent bundle of $\bP^n$;
it is a simple exercise to check that for each~\mbox{$0 \le i \le n$} taking $(\cO, T(-1), \dots, \Lambda^iT(-i))$ as a starting block,
gives a Lefschetz collection in $\Db(\bP^n)$ with the support partition $\sigma = (i+1,1,\dots,1)$.
Its residual category is generated by~$(\cO(-i), \dots, \cO(-1))$.

\begin{example}
\label{example:qn}
Now assume that the base field $\Bbbk$ is algebraically closed of characteristic distinct from~$2$
and let~$Q^n \subset \bP^{n+1}$ be a smooth quadric.
Kapranov constructed in~\cite{Ka} an exceptional collection in $\Db(Q^n)$ that takes the form
\begin{equation*}
\Db(Q^n) =
\begin{cases}
\big \langle \cO, \rS ; \cO(1); \dots; \cO(n-1)  \big \rangle, & \text{if $n$ is odd,}\\
\big \langle \cO, \rS_-,\rS_+; \cO(1); \dots; \cO(n-1)  \big \rangle, & \text{if $n$ is even,}
\end{cases}
\end{equation*}
where $\rS$ and $\rS_\pm$ are the spinor bundles.
Here the starting block is either $(\cO, \rS)$ or $(\cO, \rS_-,\rS_+)$,
and the support partition is either $\sigma = (2,1, \dots, 1)$, or $\sigma = (3,1, \dots, 1)$.
In the case of odd $n$ the collection is minimal, while in the case of even $n$ it is not ---
there is also a Lefschetz collection with the starting block $(\cO, \rS_-)$
and with the support partition $\sigma = (2,2,1, \dots, 1)$.
In both cases, the residual category is generated by the dual spinor bundles (unless $n = 2$), i.e.,
it is equal to~$\langle \rS^* \rangle$ for odd $n$ and $\langle \rS_-^*,\rS_+^* \rangle$ for even $n$
(in this case the bundles $\rS_-^*$ and $\rS_+^*$ are completely orthogonal).
\end{example}

\begin{example}
\label{example:g2n}
Assume that the characteristic of the base field $\Bbbk$ is zero.
Let $X = \G(2,n)$ be the Grassmannian of 2-dimensional vector subspaces in a vector space of dimension $n$
and take $\cO_{\G(2,n)}(1)$ to be the Pl\"ucker line bundle.
Set~$m=\lfloor n/2 \rfloor$ and take $E_i = S^{i-1} \cU^*$ for~$1 \leq i \leq m$, and
\begin{equation*}
\sigma = \Bigg \lbrace
\begin{array}{ll}
(m^{2m+1}),  & \text{if} \,\,\, n=2m+1 \,\,\, \text{is odd,}\\
(m^m, (m-1)^m),  & \text{if} \,\,\, n=2m \,\,\, \text{is even}
\end{array}
\end{equation*}
(here the exponents stand for multiplicities of the entries).
Then, $(E_{\bullet}, \sigma)$ is a full Lefschetz collection in $\Db(\G(2,n))$ according to \cite[Theorem~4.1]{Ku08a}.

For odd $n$ the above collection is rectangular and minimal, and its residual category vanishes.
For even $n$, the collection is still minimal, but no longer rectangular.
Moreover, according to~\cite[Theorem 9.5]{CMKMPS}, the residual category is generated by $m$ completely orthogonal exceptional objects.
\end{example}

Note that the exceptional collections discussed in Example~\ref{example:g2n} are quite far from the Kapranov's exceptional collections
(see Example~\ref{example:gr24-two-collections} for a comparison in the case~$\G(2,4)$).

\begin{example}
\label{example:ig22n}
Let us keep characteristic zero assumption and let $\IG(2,2n) \subset \G(2,2n)$ be the Grassmannian of 2-dimensional subspaces
isotropic with respect to a given symplectic form on a vector space of dimension $2n$ (actually, this is just a smooth
hyperplane section of $\G(2,2n)$).
If we set $E_i = S^{i-1} \cU^*$ for $1 \leq i \leq n$ and $\sigma = (n^{n-1}, (n-1)^n)$, then $(E_{\bullet}, \sigma)$
is a full Lefschetz collection in $\Db(\IG(2,2n))$ according to \cite[Theorem 5.1]{Ku08a}.
In~this example, by~\cite[Theorem 9.6]{CMKMPS}, the residual category is equivalent
to the derived category of representations of $A_{n-1}$ quiver.
\end{example}

\begin{example}
\label{example:og22np1}
Assume that the base field $\Bbbk$ is algebraically closed of zero characteristic.
Let $\OG(2,2n+1) \subset \G(2,2n+1)$ be the Grassmannian of 2-dimensional subspaces
isotropic with respect to a given nondegenerate quadratic form on a vector space of dimension~\mbox{$2n+1$}.
If we set $E_i = S^{i-1} \cU^*$ for $1 \leq i \leq n - 1$,
\mbox{$E_n = \rS$} (again, $\rS$ is the spinor bundle, see~\cite[\S6]{Ku08a}),
and $\sigma = (n^{2n-2})$,
then $(E_{\bullet}, \sigma)$ is a full Lefschetz collection in~$\Db(\OG(2,2n+1))$ according to~\cite[Theorem~7.1]{Ku08a}.
This collection is rectangular, so the residual category vanishes.
\end{example}

\begin{example}
\label{example:og22n}
Assume that the base field $\Bbbk$ is algebraically closed of zero characteristic.
Let $\OG(2,2n) \subset \G(2,2n)$ be the Grassmannian of 2-dimensional subspaces
isotropic with respect to a given nondegenerate quadratic form on a vector space of dimension~\mbox{$2n$}.
Following a suggestion of Nicolas Perrin, we construct in~\cite{KS19} a full Lefschetz exceptional collection
such that the residual category is equivalent to representations of the Dynkin quiver $D_n$.
\end{example}

\begin{rmk}
In~\cite{Ku15} many fractional Calabi--Yau categories were constructed.
Note that by definition each of these categories is a residual category for an appropriate Lefschetz decomposition.
\end{rmk}

The structure of the residual category of $\Db(X)$ can be predicted by Homological Mirror Symmetry.
Assume that $X$ is a complex Fano variety of Picard rank one, and let~$f \colon Y \to \mathbb{A}^1$
be a Landau--Ginzburg model associated with $X$ by HMS.
One expects that the derived category of $X$ is equivalent to the Fukaya--Seidel category of $(Y,f)$:
\begin{equation}
\label{Eq.: HMS}
\Db(X)  \simeq \FS(Y,f),
\end{equation}
while the Jacobian ring of the function $f$ is isomorphic to the small quantum cohomology ring of $X$.
\begin{equation}
\label{Eq.: Mirror Symmetry}
\Jac(Y,f) \cong \QH(X).
\end{equation}
Assume that $\QH(X)$ is generically semisimple and critical points of $f$ are isolated.
Then by~\eqref{Eq.: Mirror Symmetry} all critical points of $f$ are simple,
and the Fukaya--Seidel category $\FS(Y,f)$ is generated by the corresponding Lefschetz thimbles,
that form an exceptional collection (this is the HMS explanation for the Dubrovin's conjectures~\cite{Du}).

Assume now that the Fano index of $X$ is $m$, i.e., $-K_X = mH$ for the ample generator~$H$ of~$\Pic(X)$.
We expect that there exists a~$\bmu_m$-equivariant Landau--Ginzburg model $(Y,f)$ for~$X$
(i.e., with a $\bmu_m$-action on $Y$ and~$f$ equivariant with respect to the standard~$\bmu_m$-action on $\mathbb{A}^1$).
We expect that there is a Lefschetz exceptional collection in $\Db(X)$ such that its rectangular part
via the equivalence~\eqref{Eq.: HMS} is generated by the thimbles associated with critical points of $f$
that form free $\bmu_m$-orbits, and the residual category of $X$ is generated by the thimbles associated with critical points
that form non-free $\bmu_m$-orbits. Note that all non-free $\bmu_m$-orbits are mapped by $f$ to $0 \in \mathbb{A}^1$.

Since the critical points of $f$ are isolated and simple,
the corresponding vanishing cycles do not intersect over a neighborhood of $0$.
Since the Lefschetz thimbles are obtained by parallel transport of these vanishing cycles over the same path in~$\mathbb{A}^1$,
starting at a chosen regular point of~$\mathbb{A}^1$ and ending at $0 \in \bA^1$,
the corresponding vanishing cycles over the chosen regular point of~$\mathbb{A}^1$ do not intersect either.
Hence, the respective exceptional objects in $\FS(Y,f)$ are completely orthogonal.

This leads to the following variant of the Dubrovin's conjecture:

\begin{conj}
\label{conjecture:vague}
Let $X$ be a complex Fano variety of Picard rank one with $-K_X = mH$.
If the small quantum cohomology ring $\QH(X)$ of $X$ is generically semisimple,
then $\Db(X)$ has a full Lefschetz collection with respect to $\cO_X(H)$,
whose residual category is generated by a completely orthogonal exceptional collection.
\end{conj}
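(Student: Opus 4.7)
The plan is to view this conjecture through the Homological Mirror Symmetry picture articulated in the paragraphs just preceding the statement, and to use that picture both as a source of a conjectural construction of the Lefschetz collection and as a source of the orthogonality statement. Since a direct synthetic argument valid for all Fano varieties of Picard rank one is out of reach at present, I would organize the attempt in three layers.

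First, I would try to promote the mirror-theoretic heuristic into an actual construction. Assuming a $\bmu_m$-equivariant Landau--Ginzburg model $(Y,f)$ satisfying \eqref{Eq.: HMS} and \eqref{Eq.: Mirror Symmetry}, generic semisimplicity of $\QH(X)$ together with isolatedness of the critical locus of $f$ forces all critical values to be simple. The $\bmu_m$-action then partitions the critical set into free orbits and non-free orbits, the latter all lying over $0 \in \bA^1$. I would expect the thimbles over a free orbit to transport under \eqref{Eq.: HMS} into a single block $E, E(1), \dots, E(m-1)$ of a Lefschetz decomposition, while the thimbles over non-free orbits should generate the residual category. Thus the combinatorics of the support partition $\sigma$ would be read off from the $\bmu_m$-orbit structure on $\Spec\Jac(Y,f)\iso\Spec\QH(X)$.

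Second, the complete orthogonality of the residual collection is precisely the content of the vanishing-cycle argument in the excerpt: the non-free critical points all lie over $0$, and since they are simple with disjoint vanishing cycles over a nearby base point of $\bA^1$, the corresponding Lefschetz thimbles pair trivially under the Fukaya--Seidel $\RHom$, which by~\eqref{Eq.: HMS} transports into a complete orthogonality statement in $\Db(X)$. I would try to upgrade this heuristic into a proof by producing, for each class of Fano varieties where a candidate collection is available, a direct cohomological argument implementing the same conclusion on the $B$-side.

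Third, in practice the conjecture must be attacked case-by-case: projective spaces (trivial residual), quadrics (Kapranov's collection, Example~\ref{example:qn}), Grassmannians $\G(2,n)$ via~\cite{Ku08a} and~\cite{CMKMPS}, and then the general $\G(k,n)$ via Fonarev's collection. For each family the verification has the same shape: exhibit a Lefschetz collection whose starting block matches the size predicted by the $\bmu_m$-orbit count on $\QH(X)$, compute the orthogonal complement to the rectangular part, and finally verify pairwise vanishing of $\RHom$-groups between its generators using Borel--Weil--Bott or analogous cohomology computations.

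The main obstacles are twofold. Conceptually, neither~\eqref{Eq.: HMS} nor~\eqref{Eq.: Mirror Symmetry} is available at the required level of generality, let alone in a $\bmu_m$-equivariant form, so the mirror argument remains heuristic and only serves as a predictive guide. Practically, even granting a candidate Lefschetz exceptional collection, fullness is typically the hardest step to verify (for instance, this is where the paper must assume completeness of Fonarev's collection for general $k$), and the complete orthogonality of the residual generators has to be established by an \emph{ad hoc} calculation in each family. The generic semisimplicity hypothesis is indispensable: without it, as Examples~\ref{example:ig22n} and~\ref{example:og22n} show, the residual category can carry a non-trivial quiver structure and complete orthogonality genuinely fails.
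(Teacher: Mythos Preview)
The statement in question is a \emph{conjecture}, not a theorem: the paper does not prove it, and explicitly labels it as such. Consequently there is no ``paper's own proof'' to compare against; the paper only motivates the conjecture via the mirror-symmetry heuristic you describe, checks that it is consistent with the known examples, and then establishes the special case of $\G(p,pm)$ for $p$ prime (modulo fullness of Fonarev's collection).

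Your proposal is therefore not a proof but rather an accurate summary of the paper's own heuristic motivation and its strategy for attacking special cases. You correctly identify that the $\bmu_m$-equivariant Landau--Ginzburg picture is only a guiding principle, that neither~\eqref{Eq.: HMS} nor~\eqref{Eq.: Mirror Symmetry} is available at the required generality, and that in practice one must proceed family by family. This is exactly the paper's stance. If your intent was to \emph{prove} the conjecture, then the gap is fundamental: the mirror-symmetry argument is heuristic and cannot currently be made rigorous, and no uniform $B$-side construction of the required Lefschetz collection is known for arbitrary Fano varieties of Picard rank one. If instead your intent was to explain the conjecture's origin and outline a program, then your write-up matches the paper's own discussion and is appropriate.
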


This conjecture agrees with the above examples.
Indeed, in Examples~\ref{example:pn}, \ref{example:qn}, \ref{example:g2n}, and~\ref{example:og22np1}
the small quantum cohomology ring is generically semisimple \cite{GaGo,ChPe,ChMaPe3},
while in Example~\ref{example:ig22n} one of the factors of $\QH$ is isomorphic to $\bQ[t]/t^{n-1}$ (see \cite{CMKMPS}),
the Jacobian ring of the singularity of type $A_{n-1}$, which explains the appearance of the category of representations
of the quiver $A_{n-1}$ in the residual category.
Similarly, in Example~\ref{example:og22n} one can observe a singularity of type $D_n$ among the factors of $\QH(\OG(2,2n))$.

\begin{rmk}
\label{remark: higher picard rank}
We expect Conjecture~\ref{conjecture:vague} to hold more generally for a complex Fano variety with arbitrary Picard rank.
However, due to the present lack of interesting examples, where both the derived category and quantum cohomology are understood,
we have formulated the conjecture more restrictively.
\end{rmk}

In this paper we discuss Conjecture~\ref{conjecture:vague} for Grassmannians $X = \G(k,n)$.
In this case, a nice Lefschetz collection in $\Db(X)$ is known after Fonarev's work~\cite{Fo}.
In general this collection is not known but expected to be full (Conjecture~\ref{conjecture:ckn-0}).
We consider the residual category $\cR_{k,n}$ associated with the Fonarev's collection in $\Db(\G(k,n))$
and since~$\QH(\G(k,n))$ is known to be generically semisimple, this leads us to the
conjecture that it is generated by a completely orthogonal exceptional collection (Conjecture~\ref{Main Conjecture}).

If $k$ and $n$ are coprime, the Fonarev's collection is known to be full, and it is also rectangular.
Consequently, the residual category $\cR_{k,n}$ vanishes in this case (Corollary~\ref{corollary:coprime}),
in particular Conjecture~\ref{Main Conjecture} holds.

The main result of this paper is formulated in Theorem~\ref{Theorem}.
It says that Conjecture~\ref{Main Conjecture} also holds modulo Conjecture~\ref{conjecture:ckn-0}
in the simplest case of non-coprime $k$ and $n$, that is in the case of the Grassmannian $\G(p,pm)$, where $p$ is a prime number.

To prove this result we show in Proposition~\ref{proposition:conjectures-implication}
that Conjecture~\ref{Main Conjecture} can be deduced from yet another conjecture (Conjecture~\ref{conjecture:technical}),
which we verify for $(k,n) = (p,pm)$.

In the Appendix we prove that the Fonarev's collection is full for~$(k,n) = (3,3m)$ (Proposition~\ref{Completeness G(3,3m)}).
In a combination with other results of the paper, this proves Conjecture~\ref{Main Conjecture} for $k = 3$.

To conclude, we should say that it would be very interesting to describe the residual category
for homogeneous varieties other than $\G(k,n)$.
For instance, following Example~\ref{example:ig22n}, symplectic isotropic Grassmannians $\IG(k,2n)$
are expected to produce interesting residual categories.
The main problem here is that nothing is known about minimal Lefschetz collections in the derived categories of $\IG(k,2n)$
beyond the case $k = 2$ treated in~\cite{Ku08a} and some sporadic cases~\cite{S,PS,G18}.

The paper is organized as follows.
In Section~\ref{section:residual} we introduce the general formalism of residual categories.
In Section~\ref{Sec.: Lefschetz collections for G(k,n)} we describe the Fonarev's exceptional collection
and state our main results and conjectures.
In Section~\ref{section:k=p} we prove Theorem~\ref{Theorem}.
Finally, in Appendix~\ref{Sec.: Completeness for G(3,3m)} we prove the fullness in case $p = 3$.

\begin{rmk}
\label{remark: coalescence}
In a series of papers \cite{CDG1, CDG2, CDG3}, G.~Cotti, B.~Dubrovin, and D.~Guzzetti
study semisimple Frobenius structures with coalescing eigenvalues.
It would be very interesting to clarify the relation between Conjecture~\ref{conjecture:vague}
and the Stokes phenomenon of the Dubrovin's connection in quantum cohomology studied in {\it loc. cit..}
(cf. \cite[Remark~5.16]{CDG3}).
\end{rmk}

\subsection*{Acknowledgements}

We would like to thank Anton Fonarev, Sergey Galkin, and Anton Mellit for useful discussions.
A.K. was partially supported by the Russian Academic Excellence Project ``5-100'',
and by the Program of the Presidium of the Russian Academy of Sciences No.~01 ``Fundamental Mathematics and
its Applications'' under grant \mbox{PRAS-18-01}. M.S. thanks Max Planck Institute for Mathematics (MPIM) in Bonn for hospitality and financial support at the final stage of this project.

\section{Residual categories}
\label{section:residual}

Let $\cT$ be a saturated (i.e., smooth and proper) triangulated category over a field.
Denote by $\SS_\cT$ its Serre functor.
For instance, if $\cT = \Db(X)$ is the bounded derived category of coherent sheaves on a smooth projective variety $X$,
then the Serre functor is
\begin{equation*}
\SS_\cT(F) \cong F \otimes \omega_X[\dim X],
\end{equation*}
the twist by the canonical bundle and the dimension shift.

\subsection{Lefschetz decompositions with respect to a polarization}

In this section we introduce a generalization of the notion of a Lefschetz decomposition from~\cite{Ku07}.

\begin{definition}
\label{def:polarization}
A \textsf{polarization} of $\cT$ of \textsf{index} $m > 0$ is an autoequivalence $\tau \colon \cT \to \cT$
such that the composition $\SS_\cT \circ \tau^m$ is a shift, i.e.,
\begin{equation}
\label{eq:polarization}
\tau^m \cong \SS_\cT^{-1}[s]
\end{equation}
for some integer $s$.
\end{definition}

\begin{example}
\label{example:geometric}

If $\cT = \Db(X)$ and $\omega_X^{-1} \cong \cO_X(mH)$ for a divisor class $H$ then
the twist by the line bundle $\cO_X(H)$ is a polarization of index~$m$.
So, in this case, if additionally $H$ is ample, then the notion of a polarization is a generalization of the usual notion,
and the notion of the index is a generalization of the Fano index of $X$.
The shift $s$ in this example is equal to the dimension of $X$.
\end{example}

Note that if $\tau$ is a polarization of $\cT$ of index $m$ and $d$ is a (positive) divisor of $m$
then~$\tau^{m/d}$ is a polarization of $\cT$ of index $d$.
In particular, every triangulated category has a polarization of index~1 (provided by~$\tau = \SS_\cT^{-1}$),
but it is much more interesting to consider a polarization of biggest possible index.

\begin{definition}
A \textsf{Lefschetz decomposition of $\cT$ with respect to a polarization $\tau$} is a semiorthogonal decomposition
\begin{equation*}
\cT = \langle \cT_0, \tau(\cT_1), \tau^2(\cT_2), \dots, \tau^{i-1}(\cT_{i-1}) \rangle,
\end{equation*}
whose components are obtained from a chain
$\cT_0 \supset \cT_1 \supset \cT_2 \supset \dots \supset \cT_{i-1}$
of admissible subcategories of $\cT$ by iterated application of $\tau$.
\end{definition}

In the case when $\cT = \Db(X)$ and $\tau$ is the twist by a line bundle $\cO_X(1)$, this coincides
with the definition of a Lefschetz decomposition from~\cite{Ku07}.
Note also, that if the components~$\cT_i$ of a Lefschetz decomposition are generated by compatible exceptional collections,
this agrees with Definition~\ref{Def.: Lefschetz collections}(i) from the Introduction.

If $m$ is the index of $\tau$, then $i = m$ is the maximal possible length for a Lefschetz decomposition as above.
Indeed, if $0 \ne F \in \cT_m {} \subset \cT_0$ then
\begin{equation*}
\RHom(\tau^m(F),F) \cong
\RHom(\SS^{-1}_\cT(F[s]),F) \cong
\RHom(F,F[s])^\vee \ne 0,
\end{equation*}
so $\tau^m(\cT_m)$ and $\cT_0$ are not semiorthogonal.
On the other hand, we can always extend a Lefschetz decomposition of length less than $m$ to the one of length $m$
by adding several zero components.
So, we may safely assume that any Lefschetz decomposition with respect to a polarization~$\tau$ has length equal to the index of $\tau$.

An approximation to a Lefschetz decomposition is given by the next notion.

\begin{definition}
\label{definition:primitive}
Assume given a polarization $\tau$ of $\cT$ of index $m$.
An admissible subcategory $\fa \subset \cT$ is \textsf{primitive} with respect to $\tau$
if the collection of subcategories
\begin{equation*}
(\fa,\tau(\fa),\dots,\tau^{m-1}(\fa))
\end{equation*}
is semiorthogonal in~$\cT$.
\end{definition}

Of course, if a Lefschetz decomposition of $\cT$ of length $m$ with respect to a polarization~$\tau$ is given
then $\fa = \cT_{m-1}$ is a primitive subcategory of $\cT$ with respect to $\tau$.

\subsection{Residual category}

In this section we define residual categories and discuss some of their properties.

\begin{lemma}
\label{lemma:residual}
If $\tau$ is a polarization of $\cT$ of index $m$ and $\fa \subset \cT$ is a primitive subcategory,
there is an admissible subcategory $\cR_\fa \subset \cT$ and a semiorthogonal decomposition
\begin{equation}
\label{eq:residual-category}
\cT = \langle \cR_\fa, \fa,\tau(\fa),\dots,\tau^{m-1}(\fa) \rangle.
\end{equation}
\end{lemma}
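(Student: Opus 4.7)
The plan is to construct $\cR_\fa$ as the left orthogonal to the subcategory generated by $\fa$ and its twists, and to verify this is admissible so that it produces a genuine semiorthogonal decomposition.

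First I would set
\begin{equation*}
\cB := \bigl\langle \fa, \tau(\fa), \ldots, \tau^{m-1}(\fa) \bigr\rangle \subset \cT
\end{equation*}
and define $\cR_\fa := {}^\perp \cB$. The semiorthogonality of the sequence $(\fa, \tau(\fa), \ldots, \tau^{m-1}(\fa))$ is given by Definition~\ref{definition:primitive}, so there is nothing to check there. Next I would observe that each twist $\tau^i(\fa)$ is admissible in $\cT$: since $\tau$ is an autoequivalence, it transports the left and right adjoints of the inclusion $\fa \hookrightarrow \cT$ to the corresponding adjoints for $\tau^i(\fa) \hookrightarrow \cT$.

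The core step is to check that $\cB$ itself is admissible in $\cT$. For this I would use the standard gluing principle for semiorthogonal decompositions: if $\cC_1, \cC_2 \subset \cT$ are admissible and $\cC_2 \subset \cC_1^\perp$, then $\langle \cC_1, \cC_2 \rangle$ is again admissible, because the adjoints to the inclusion can be assembled from the projections onto the two components. Iterating this $m-1$ times along the semiorthogonal chain $\bigl(\fa, \tau(\fa), \ldots, \tau^{m-1}(\fa)\bigr)$ gives the admissibility of $\cB$.

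Once $\cB$ is known to be admissible, the general theory of admissible subcategories in a saturated triangulated category yields the semiorthogonal decomposition $\cT = \langle {}^\perp\cB, \cB \rangle$, with ${}^\perp\cB$ automatically admissible. Unpacking $\cB$ then produces the asserted decomposition
\begin{equation*}
\cT = \langle \cR_\fa, \fa, \tau(\fa), \dots, \tau^{m-1}(\fa) \rangle.
\end{equation*}
I do not expect any serious obstacle: the only non-formal ingredient is the inductive admissibility of a subcategory generated by a semiorthogonal sequence of admissible subcategories, which is a well-known and routine lemma in the theory of semiorthogonal decompositions.
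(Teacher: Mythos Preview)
Your approach is essentially the same as the paper's: show that the subcategory $\cB = \langle \fa, \tau(\fa), \ldots, \tau^{m-1}(\fa)\rangle$ is admissible (by iterated gluing of admissible semiorthogonal pieces) and then take its orthogonal complement. The paper compresses this into two sentences, but the content is identical.

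There is, however, a slip in which orthogonal you take. With the convention used in the paper (and standard in the subject), in a semiorthogonal decomposition $\cT = \langle \cA_1, \cA_2 \rangle$ one has $\Hom(\cA_2,\cA_1)=0$, so $\cA_1 = \cA_2^\perp$. Thus for $\cB$ admissible one gets $\cT = \langle \cB^\perp, \cB\rangle$ and $\cT = \langle \cB, {}^\perp\cB\rangle$, but \emph{not} $\cT = \langle {}^\perp\cB, \cB\rangle$ as you wrote. Accordingly, the residual category should be defined as $\cR_\fa := \cB^\perp$, the \emph{right} orthogonal, exactly as the paper does; your choice $\cR_\fa = {}^\perp\cB$ would place $\cR_\fa$ on the wrong side of $\cB$ and does not yield the stated decomposition. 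Once you swap ${}^\perp\cB$ for $\cB^\perp$, your argument is correct and matches the paper.
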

\begin{proof}
Semiorthogonality of the collection $(\fa,\tau(\fa),\dots,\tau^{m-1}(\fa))$
and admissibility of $\fa$ imply that the subcategory $\langle \fa,\tau(\fa),\dots,\tau^{m-1}(\fa) \rangle \subset \cT$
generated by $\fa,\tau(\fa),\dots,\tau^{m-1}(\fa)$ is admissible.
So, the category $\cR_\fa$ can be defined as the orthogonal complement
\begin{equation*}
\cR_\fa := \langle \fa,\tau(\fa),\dots,\tau^{m-1}(\fa) \rangle^\perp
\end{equation*}
and~\eqref{eq:residual-category} follows.
\end{proof}

\begin{rmk}
\label{rmk:primitive-Lefschetz}
Setting $\cT_0 = \langle \cR_\fa, \fa \rangle$ and $\cT_1 = \cT_2 = \dots = \cT_{m-1} = \fa$
we obtain a Lefschetz decomposition of $\cT$.
\end{rmk}

This observation will be generalized in Proposition~\ref{prop:bijection-lefschetz} below.

\begin{definition}
\label{definition:residual}
The triangulated category $\cR_\fa$ defined by Lemma~\ref{lemma:residual} is called
the \textsf{residual category} of the polarized triangulated category~$(\cT,\tau)$
with respect to the primitive subcategory $\fa$.
\end{definition}

When the primitive category $\fa$ is clear from the context, we will abbreviate the notation for the residual category to just $\cR$.
The simplest example of a primitive subcategory in a geometrical context (i.e., for~$\cT = \Db(X)$, where $X$ is a Fano variety over a field of zero characteristic with an ample polarization~$H$ of index $m$)
is the subcategory $\fa = \langle \cO_X \rangle$ generated by the structure sheaf.
Indeed, $\cO_X$ is exceptional and the collection
\begin{equation*}
\langle \cO_X, \cO_X(H), \dots, \cO_X((m-1)H) \rangle
\end{equation*}
is exceptional by Kodaira vanishing. If $X$ is a hypersurface,
the corresponding residual category first appeared in~\cite{Ku04} and was shown to be a fractional Calabi--Yau category.
Another example can be found in Section~\ref{SubSec.: Residual subcategories G(k,n)}.

The main observation about the residual category is that it inherits a polarization from the ambient category.
Denote by $\bL_\fa, \bR_\fa \colon \cT \to \cT$ the left and right mutation functors through $\fa$.
For definition and basic properties of mutation functors we refer to~\cite[Section~2.3]{KP} and references therein.

\begin{thm}
\label{thm:residul-polarization}
Let $\tau$ be a polarization of index $m$ of a saturated triangulated category $\cT$,
let $\fa \subset \cT$ be a primitive subcategory with respect to $\tau$, and let $\cR \subset \cT$ be the corresponding residual category.
Then $\cR$ is saturated, the functor
\begin{equation}
\label{eq:residual-polarization}
\tau_\cR := \bL_\fa \circ \tau
\end{equation}
gives an autoequivalence of $\cR$, and is a polarization of index $m$ on~it.
\end{thm}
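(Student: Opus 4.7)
The plan is to verify the three assertions of the theorem in turn: that $\cR$ is saturated, that $\tau_\cR$ restricts to an autoequivalence of $\cR$, and that it satisfies $\tau_\cR^m \cong \SS_\cR^{-1}[s]$. Saturation is immediate because $\cR$ is the right orthogonal of the admissible subcategory $\cB := \langle \fa, \tau(\fa), \dots, \tau^{m-1}(\fa) \rangle$, hence admissible in the saturated ambient category $\cT$.

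For the preservation of $\cR$ under $\tau_\cR$, I would take $F \in \cR$ and apply $\RHom(\tau^i\fa, -)$ to the defining triangle $A \to \tau F \to \bL_\fa(\tau F)$ with $A \in \fa$. For $1 \leq i \leq m-1$ the term $\RHom(\tau^i\fa, A)$ vanishes by primitivity of $\fa$, while $\RHom(\tau^i\fa, \tau F) \cong \RHom(\tau^{i-1}\fa, F) = 0$ follows from $F \in \cR$; the case $i=0$ is automatic since $\bL_\fa(\tau F) \in \fa^\perp$. Together these give $\tau_\cR(F) \in \cR$.

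The heart of the argument is the computation of $\tau_\cR^m$ on $\cR$. Using the conjugation identity $\tau \circ \bL_\fa = \bL_{\tau\fa} \circ \tau$, I would shuffle all copies of $\tau$ in $(\bL_\fa \circ \tau)^m$ to the right to obtain
\[
\tau_\cR^m = \bL_\fa \circ \bL_{\tau\fa} \circ \cdots \circ \bL_{\tau^{m-1}\fa} \circ \tau^m.
\]
Because the collection $(\fa, \tau\fa, \dots, \tau^{m-1}\fa)$ is semiorthogonal, each mutation in this nested composition preserves the orthogonalities already achieved, so the composition equals the total left mutation $\bL_\cB$ through $\cB$. Substituting $\tau^m \cong \SS_\cT^{-1}[s]$ and applying the standard identity $\SS_\cR^{-1} \cong \bL_\cB \circ \SS_\cT^{-1}|_\cR$---which follows from Serre duality in $\cT$ together with the vanishing $\RHom(\cB, \cR) = 0$---one finds $\tau_\cR^m \cong \SS_\cR^{-1}[s]$.

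Since $\tau_\cR^m$ is the autoequivalence $\SS_\cR^{-1}[s]$ of $\cR$ and $\tau_\cR$ commutes with its own powers, the two-sided inverse $\tau_\cR^{m-1} \circ (\SS_\cR^{-1}[s])^{-1}$ exhibits $\tau_\cR$ as an autoequivalence of $\cR$ and simultaneously as a polarization of index $m$. The technical crux I expect is the identification of the iterated composition of single mutations with the total mutation $\bL_\cB$; once the order of mutations is carefully tracked against the semiorthogonal structure of $\cB$, everything else reduces to routine manipulations with triangles and the Serre functor.
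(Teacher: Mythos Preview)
Your proposal is correct and follows essentially the same route as the paper: saturation from admissibility, preservation of $\cR$ via the mutation triangle, and the key shuffling identity $\tau_\cR^m \cong \bL_{\langle \fa,\tau\fa,\dots,\tau^{m-1}\fa\rangle}\circ\tau^m$ combined with the standard formula for the Serre functor of a semiorthogonal component.

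The one point of divergence is how you establish that $\tau_\cR$ is an autoequivalence. The paper does this \emph{before} the $\tau_\cR^m$ computation, by exhibiting the explicit inverse $\tau^{-1}\circ\bR_\fa$: one checks $\tau(\cR)\subset{}^\perp\fa$ using $\tau^m(\fa)=\SS_\cT^{-1}(\fa)[s]$, and then invokes the fact that $\bL_\fa$ and $\bR_\fa$ are mutually inverse equivalences between ${}^\perp\fa$ and $\fa^\perp$. You instead defer this step and extract invertibility \emph{a posteriori} from $\tau_\cR^m\cong\SS_\cR^{-1}[s]$, observing that an endofunctor whose $m$-th power is invertible is itself invertible with inverse $\tau_\cR^{m-1}\circ\SS_\cR[-s]$. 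Your route is marginally more economical (no right mutations needed), while the paper's gives a concrete formula for $\tau_\cR^{-1}$ that can be useful in applications. Both are entirely valid.
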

\begin{proof}
The category $\cR$ is a component of the semiorthogonal decomposition~\eqref{eq:residual-category} of a saturated category $\cT$,
hence is itself saturated.
So, we only need to check that $\tau_\cR$ has all the necessary properties. Most of the arguments can be found in~\cite{Ku15} and~\cite{KP},
where the functors $\bL_\fa \circ \tau$ come under the name \emph{rotation functors}.
We give a complete proof for reader's convenience.

First, let us check that $\tau_\cR$ preserves $\cR$ (this is~\cite[Lemma~7.6]{KP}).
Take any $F \in \cR$ and let
\begin{equation*}
F_\fa \to \tau(F) \to \bL_\fa(\tau(F))
\end{equation*}
be the mutation triangle for $\tau(F)$ with $F_\fa \in \fa$.
Then both $\tau(F)$ and $F_\fa$ belong to the subcategory~$\langle \tau(\fa), \tau^2(\fa), \dots, \tau^{m-1}(\fa) \rangle^\perp$;
for the former this follows from the definition of~$\cR$ since $\tau$ is an autoequivalence,
and for the latter this follows from the definition of a primitive subcategory.
Therefore, $\bL_\fa(\tau(F)) \in \langle \tau(\fa), \tau^2(\fa), \dots, \tau^{m-1}(\fa) \rangle^\perp$.
On the other hand, $\bL_\fa(\tau(F)) \in \fa^\perp$ by definition of $\bL_\fa$.
Combining these two observations we conclude that~$\tau_\cR(F) = \bL_\fa(\tau(F)) \in \cR$.

Next, let us check that the functor $\tau_\cR$ is an autoequivalence of $\cR$.
For this we check that the functor~$\tau^{-1} \circ \bR_\fa$ is its inverse.
Indeed, note that
\begin{equation*}
\tau(\cR) \subset
(\tau^m(\fa))^\perp =
(\SS_\cT^{-1}(\fa))^\perp =
{}^\perp\fa
\end{equation*}
and the functors $\bL_\fa$ and $\bR_\fa$ induce mutually inverse equivalences between ${}^\perp\fa$ and $\fa^\perp$
(\cite[Lemma~1.9]{BK}).
Therefore, for any $F \in \cR$ we have
\begin{equation*}
\tau^{-1}(\bR_\fa(\bL_\fa(\tau(F)))) \cong
\tau^{-1}(\tau(F)) \cong F.
\end{equation*}
On the other hand, $\bL_\fa(\tau(\tau^{-1}(\bR_\fa(F)))) \cong \bL_\fa(\bR_\fa(F)) \cong F$
for any $F \in \fa^\perp$, hence $\tau^{-1} \circ \bR_\fa$ is indeed the inverse of $\tau_\cR$.

Finally, let us check that $\tau_\cR$ is a polarization of $\cR$ of index $m$
(this is a combination of~\cite[Lemma~2.6 and Lemma~3.13]{Ku15}).
Indeed, for any $i \le m$ we have
\begin{equation}
\label{eq:tau-i}
\begin{aligned}
\tau_\cR^i
&\cong \bL_\fa \circ \tau \circ \bL_\fa \circ \tau \circ \bL_\fa \circ \tau \circ \dots \circ \bL_\fa \circ \tau \\
&\cong \bL_\fa \circ (\tau \circ \bL_\fa \circ \tau^{-1}) \circ
(\tau^2 \circ \bL_\fa \circ \tau^{-2}) \circ
\dots \circ (\tau^{i-1} \circ \bL_\fa \circ \tau^{-(i-1)}) \circ \tau^i \\
&\cong \bL_\fa \circ \bL_{\tau(\fa)} \circ
\bL_{\tau^2(\fa)} \circ
\dots \circ \bL_{\tau^{i-1}(\fa)} \circ \tau^i \\
& \cong \bL_{\langle \fa, \tau(\fa), \dots, \tau^{i-1}(\fa) \rangle} \circ \tau^i,
\end{aligned}
\end{equation}
the first isomorphism is just the definition of $\tau_\cR$,
the second is clear,
the third and the fourth are~\cite[Lemma~2.5 and (2.3)]{KP}.
Moreover, we have
\begin{equation*}
\bL_{\langle \fa, \tau(\fa), \dots, \tau^{m-1}(\fa) \rangle} \circ \tau^m
\cong \bL_{\langle \fa, \tau(\fa), \dots, \tau^{m-1}(\fa) \rangle} \circ \SS_\cT^{-1}[s]
\cong \SS_\cR^{-1}[s],
\end{equation*}
by~\eqref{eq:polarization} and the standard formula for the (inverse) Serre functor of a semiorthogonal component, see~\cite[Proposition~3.7]{BK}. Combining this with~\eqref{eq:tau-i}, we see that $\tau_\cR$ is a polarization of index~$m$.
\end{proof}

\begin{rmk}
One can weaken the hypothesis on polarization of $\cT$ to get the same result.
Indeed, it is enough to assume that $\tau$ is any autoequivalence of $\cT$ and $\fa \subset \cT$ is a primitive with respect to $\tau$ subcategory of $\cT$
such that the functor $\SS_\cT \circ \tau^m$ preserves $\fa$
(when this functor is a shift as in~\eqref{eq:polarization}, it preserves any subcategory of $\cT$).
\end{rmk}

Note that there is a natural partial inclusion ordering on primitive subcategories.
Of course, for a given polarized triangulated category $\cT$ it is interesting to find a maximal primitive subcategory;
it will yield the finest semiorthogonal decomposition~\eqref{eq:residual-category} with a minimal residual category $\cR$.

The next simple proposition shows that passing to a residual category is useful for construction of Lefschetz decompositions.

\begin{prop}
\label{prop:bijection-lefschetz}
Let $\tau$ be a polarization of $\cT$ of index $m$,
let $\fa \subset \cT$ be a primitive subcategory,
let $\cR \subset \cT$ be the corresponding residual category, and let $\tau_\cR$ be the induced polarization of $\cR$.
There is a bijection between
\begin{itemize}
\item
the set of all Lefschetz decompositions of $\cR$ with respect to $\tau_\cR$, and
\item
the set of all Lefschetz decompositions of $\cT$ with respect to $\tau$, such that $\fa \subset \cT_{m-1}$.
\end{itemize}
The bijection takes a Lefschetz decomposition
\begin{equation}
\label{eq:cr-ld}
\cR = \langle \cR_0, \tau_\cR(\cR_1), \dots, \tau_\cR^{m-1}(\cR_{m-1}) \rangle
\end{equation}
to the Lefschetz decomposition
\begin{equation}
\label{eq:ct-ld}
\cT = \langle \cT_0, \tau(\cT_1), \dots, \tau^{m-1}(\cT_{m-1}) \rangle,
\end{equation}
where
\begin{equation}
\label{eq:cti-cri}
\cT_i = \langle \cR_i, \fa \rangle \subset \cT.
\end{equation}
\end{prop}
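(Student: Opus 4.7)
The plan is to construct explicit maps in both directions and verify they are mutually inverse. The forward map is as given in the statement: send~\eqref{eq:cr-ld} to the collection $\cT_i := \langle \cR_i, \fa \rangle$. The backward map sends a Lefschetz decomposition~\eqref{eq:ct-ld} with $\fa \subset \cT_{m-1}$ to $\cR_i := \cT_i \cap \fa^\perp$; here the inclusion $\fa \subset \cT_{m-1} \subset \cT_i$ combined with admissibility of $\fa$ in $\cT$ restricts the mutation triangle through $\fa$ to $\cT_i$, producing the semiorthogonal decomposition $\cT_i = \langle \cR_i, \fa \rangle$. Once both maps are shown to land in genuine Lefschetz decompositions, they are visibly mutually inverse, since $\langle \cR_i, \fa \rangle \cap \fa^\perp = \cR_i$ whenever $\cR_i \subset \fa^\perp$.

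For the forward direction, the main task is to verify the semiorthogonality of~\eqref{eq:ct-ld}. Expanding $\cT_i = \langle \cR_i, \fa \rangle$ and $\cT_j = \langle \cR_j, \fa \rangle$, the condition $\RHom(\tau^{j-i}(\cT_j), \cT_i) = 0$ for $0 < j - i < m$ splits into four vanishing statements. Two of these are immediate: $\RHom(\tau^{j-i}(\fa),\fa)=0$ is primitivity of $\fa$, and $\RHom(\tau^{j-i}(\fa), \cR_i) = 0$ follows from $\cR_i \subset \cR = \langle \fa, \tau(\fa), \dots, \tau^{m-1}(\fa) \rangle^\perp$. A third, $\RHom(\tau^{j-i}(\cR_j), \cR_i) = 0$, I would derive from the semiorthogonality of~\eqref{eq:cr-ld} by invoking~\eqref{eq:tau-i} to realize $\tau_\cR^{j-i}(\cR_j)$ as the cone of a map from $\tau^{j-i}(\cR_j)$ with third vertex in $\langle \fa, \tau(\fa), \dots, \tau^{j-i-1}(\fa) \rangle$, and using right-orthogonality of $\cR_i$ to these. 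The last statement, $\RHom(\tau^{j-i}(\cR_j), \fa) = 0$, is the delicate one: Serre duality together with~\eqref{eq:polarization} rewrites it as a dual of $\RHom(\tau^{m-(j-i)}(\fa), \cR_j[s])$, which vanishes because $m - (j-i) \in [1, m-1]$ and $\cR_j \subset \cR$. Generation of $\cT$ then follows by combining~\eqref{eq:residual-category} with the generation of $\cR$ by $\tau_\cR^i(\cR_i)$, again via~\eqref{eq:tau-i}.

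For the backward direction, the essential step is to show that $\cR_i = \cT_i \cap \fa^\perp$ actually lies in $\cR$, i.e., $\RHom(\tau^k(\fa), F) = 0$ for all $F \in \cR_i$ and $1 \le k \le m-1$. I would split into two cases. If $k + i \le m - 1$, then $\fa \subset \cT_{k+i}$ by the nestedness of the Lefschetz chain, and the vanishing follows by applying $\tau^{-i}$ to the semiorthogonality of~\eqref{eq:ct-ld} between the $(k+i)$-th and $i$-th blocks. If $k + i \ge m$, then setting $l := m - k \in [1, i]$, Serre duality and~\eqref{eq:polarization} convert the desired vanishing into $\RHom(\tau^l(F), \fa[s]) = 0$, which holds because $F \in \cT_i \subset \cT_l$ forces $\tau^l(F) \in \tau^l(\cT_l)$, which is semiorthogonal to $\cT_0 \supset \fa$. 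Once this is established, the semiorthogonality and generation of~\eqref{eq:cr-ld} for the resulting collection $\{\cR_i\}$ transfer from those of~\eqref{eq:ct-ld} by the same~\eqref{eq:tau-i} bookkeeping, together with one more invocation of Serre duality analogous to case~(d) above.

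The hard part is the Serre-duality computation shared by case~(d) of the forward direction and the case $k + i \ge m$ of the backward direction: precisely here the polarization hypothesis $\tau^m \cong \SS_\cT^{-1}[s]$ is used essentially, and it supplies the orthogonalities that cannot be read off from the Lefschetz structure alone. Everything else reduces to bookkeeping with mutation functors, admissibility, and the already-established formula~\eqref{eq:tau-i}.
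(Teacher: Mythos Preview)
Your proof is correct. The paper takes a more compact route: rather than verifying the four semiorthogonality conditions case by case, it starts from the concatenated decomposition
\[
\cT = \langle \cR_0, \tau_\cR(\cR_1), \dots, \tau_\cR^{m-1}(\cR_{m-1}), \fa, \tau(\fa), \dots, \tau^{m-1}(\fa) \rangle,
\]
rewrites each $\tau_\cR^i(\cR_i)$ as $\bL_{\langle \fa,\dots,\tau^{i-1}(\fa)\rangle}(\tau^i(\cR_i))$ via~\eqref{eq:tau-i}, and then ``un-mutates'' to obtain the interleaved decomposition $\langle \cR_0,\fa,\tau(\cR_1),\tau(\fa),\dots,\tau^{m-1}(\cR_{m-1}),\tau^{m-1}(\fa)\rangle$ in one step; the backward direction is literally the same manipulation in reverse. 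This shuffle is not purely formal, however: recovering $\tau^i(\cR_i)$ from $\tau_\cR^i(\cR_i)$ by right mutation requires $\tau^i(\cR_i)\subset{}^\perp\langle \fa,\dots,\tau^{i-1}(\fa)\rangle$, which is exactly the Serre-duality fact you isolate as the ``delicate'' case. So the two arguments share the same essential ingredient; your approach makes the role of the polarization hypothesis~\eqref{eq:polarization} transparent, while the paper's packages it into standard mutation identities and is correspondingly shorter. One minor remark: the extra Serre-duality invocation you flag at the end of the backward direction is not actually needed --- once $\cR_i\subset\cR$ is established, left-mutation reshuffling (always valid in a semiorthogonal decomposition) already delivers both semiorthogonality and generation of~\eqref{eq:cr-ld}.
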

\begin{proof}
Assume~\eqref{eq:cr-ld} is given.
Using \eqref{eq:tau-i} we can rewrite
\begin{align*}
\cT
&= \langle \cR_0, \tau_\cR(\cR_1), \dots, \tau_\cR^{m-1}(\cR_{m-1}), \fa, \tau(\fa), \dots, \tau^{m-1}(\fa) \rangle \\
&= \langle \cR_0, \bL_\fa(\tau(\cR_1)), \dots, \bL_{\langle \fa, \tau(\fa), \dots, \tau^{m-2}(\fa) \rangle}(\tau^{m-1}(\cR_{m-1})),
\fa, \tau(\fa), \dots, \tau^{m-1}(\fa) \rangle \\
&= \langle \cR_0, \fa, \tau(\cR_1), \tau(\fa), \dots, \tau^{m-1}(\cR_{m-1}), \tau^{m-1}(\fa) \rangle \\
&= \langle \cT_0, \tau(\cT_1), \dots, \tau^{m-1}(\cT_{m-1}) \rangle,
\end{align*}
i.e., deduce Lefschetz decomposition~\eqref{eq:ct-ld} with components defined by~\eqref{eq:cti-cri}.

Conversely, given a Lefschetz decomposition~\eqref{eq:ct-ld} with $\fa \subset \cT_{m-1}$, we define
\begin{equation*}
\cR_i := \cT_i \cap \fa^\perp,
\end{equation*}
so that~\eqref{eq:cti-cri} holds.
Then reverting the argument above we deduce the required Lefschetz decomposition of $\cR$.
\end{proof}

For instance, by taking the \textsf{stupid Lefschetz decomposition} of the residual category,
i.e., by setting $\cR_0 = \cR$, $\cR_1 = \dots = \cR_{m-1} = 0$,
we recover the Lefschetz decomposition of Remark~\ref{rmk:primitive-Lefschetz}.

\section{Lefschetz decompositions for Grassmannians}
\label{Sec.: Lefschetz collections for G(k,n)}

In this section we remind known facts about exceptional collections in the derived categories of Grassmannians $\G(k,n)$,
and state our conjectures and results about their residual categories.
From now on we assume that characteristic of the base field $\Bbbk$ is zero.

\subsection{Kapranov's exceptional collection}

Let $\sY_{k,n}$ be the set of Young diagrams inscribed in a rectangle of size $k \times (n-k)$,
i.e., having at most $k$ rows and $n-k$ columns. In other words
\begin{equation*}
\sY_{k,n} = \{ \lambda=(\lambda_1, \dots , \lambda_k) \mid n - k \geq \lambda_1 \geq \dots \geq \lambda_k \geq 0 \}.
\end{equation*}
The set $\sY_{k,n}$ has a partial order $\subseteq$ given by inclusion of Young diagrams.
The cardinality of $\sY_{k,n}$ is ${n}\choose{k}$.

Let $E$ be a vector bundle on a scheme $X$.
Given a Young diagram $\lambda$, one defines a vector bundle $\Sigma^\lambda E$, called the {\sf Schur functor} of $\lambda$ applied to $E$,
as a certain direct summand of the tensor product $\bigotimes S^{\lambda_i}E$, see~\cite[\S6.1]{FH}.
In particular, if $\lambda = (m, 0, 0, \dots)$, then~\mbox{$\Sigma^\lambda E = S^m E$},
and if $\lambda = (1, \dots , 1, 0 , 0 , \dots)$, then $\Sigma^\lambda E = \Lambda^{m} E$.

Let $V$ be a vector space of dimension $n$ and consider the Grassmannian $\G(k,V)$ of linear subspaces of $V$ of dimension $k$.
Consider the short exact sequence of vector bundles
\begin{equation*}
0 \to \cU \to \cV \to \cV/\cU \to 0,
\end{equation*}
where $\cU$ is the tautological subbundle and $\cV := V \otimes \cO$.
Recall that $\cO(1) = \Lambda^k \cU^*$ and~$\omega_{\G(k,n)} \cong \cO(-n)$.

\begin{thm}[\cite{Ka}]
\label{theorem:kapranov}
Vector bundles
\begin{equation*}
\Big \langle \, \Sigma^\lambda \cU^*  \, \,  \Big| \, \, \lambda \in  \sY_{k,n}  \Big \rangle,
\end{equation*}
with any total order refining the inclusion order $\subseteq$ on  $\sY_{k,n}$ \textup(e.g., the lexicographic order from~\S\textup{\ref{subsubsection:lexicographic}}\textup),
form a full exceptional collection in~$\Db(\G(k,n))$.
\end{thm}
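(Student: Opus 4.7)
The plan is to prove the theorem in two main stages—pairwise semiorthogonality via Bott's theorem, and fullness via a Koszul resolution of the diagonal—after first noting that $|\sY_{k,n}| = \binom{n}{k}$ equals the rank of $K_0(\G(k,n))$ (the number of Schubert cells), so the correct count is automatic and fullness after semiorthogonality is equivalent to generating $K_0$.

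For semiorthogonality I would compute
$$
\RHom(\Sigma^\lambda \cU^*, \Sigma^\mu \cU^*) \cong \RGamma\bigl(\G(k,n),\, \Sigma^\lambda \cU \otimes \Sigma^\mu \cU^*\bigr).
$$
On a rank-$k$ bundle one has $\Sigma^\lambda \cU \cong \Sigma^{\lambda^*}\cU^*$ with $\lambda^* = (-\lambda_k,\ldots,-\lambda_1)$, and the Littlewood--Richardson rule decomposes $\Sigma^{\lambda^*}\cU^* \otimes \Sigma^\mu \cU^*$ into a direct sum of bundles $\Sigma^\alpha \cU^*$ indexed by non-increasing integer sequences $\alpha = (\alpha_1 \geq \cdots \geq \alpha_k)$. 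Borel--Weil--Bott for $\G(k,n)$ then computes $\RGamma(\Sigma^\alpha \cU^*)$ by forming $(\alpha_1,\ldots,\alpha_k,0,\ldots,0) + \rho$ with $\rho = (n-1,\ldots,1,0)$: if any two entries coincide the cohomology vanishes, otherwise a unique permutation sorts the sequence strictly decreasingly and the cohomology sits in the corresponding degree as a Schur representation of $V$. Inspection shows that for $\lambda = \mu$ only the trivial summand survives Bott (giving $\Bbbk$ in degree $0$), while for $\mu \not\supseteq \lambda$ every summand fails Bott regularity. Since any total order refining $\subseteq$ places $\mu$ before $\lambda$ only when $\mu \not\supseteq \lambda$ (or $\mu = \lambda$), this is precisely the vanishing that makes the collection exceptional.

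For fullness I would invoke Kapranov's resolution of the diagonal on $\G(k,V) \times \G(k,V)$. With $\cU_1$, $\cU_2$ the tautological subbundles pulled back from the two factors, the composition $\cU_2 \hookrightarrow \cV \twoheadrightarrow \cV/\cU_1$ is a regular section of the rank $k(n-k)$ bundle $\cU_2^* \boxtimes (\cV/\cU_1)$ whose zero locus is the diagonal $\Delta$. The associated Koszul complex $\Lambda^\bullet\bigl(\cU_2 \boxtimes (\cV/\cU_1)^*\bigr) \to \cO_\Delta$ expands by Cauchy's formula as
$$
\Lambda^p\bigl(\cU_2 \boxtimes (\cV/\cU_1)^*\bigr) \cong \bigoplus_{\lambda \in \sY_{k,n},\, |\lambda| = p} \Sigma^\lambda \cU_2 \boxtimes \Sigma^{\lambda^t}(\cV/\cU_1)^*,
$$
the constraint $\lambda \in \sY_{k,n}$ arising from the rank bounds on $\cU_2$ (at most $k$ rows) and on $(\cV/\cU_1)^*$ (which restricts the transpose $\lambda^t$, hence bounds $\lambda$ by $n-k$ columns). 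A standard Fourier--Mukai argument then realizes any $F \in \Db(\G(k,n))$ as $p_{2*}(p_1^* F \otimes \cO_\Delta)$; replacing $\cO_\Delta$ by its Koszul resolution and rewriting each $\Sigma^\lambda \cU_2$ in terms of $\Sigma^{\lambda^*}\cU_2^*$ exhibits $F$ as a successive extension of the bundles $\Sigma^\lambda \cU^*$ with $\lambda \in \sY_{k,n}$, proving fullness.

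The main obstacle is the second step, where the Littlewood--Richardson decomposition must be matched carefully with Bott's regularity condition in order to extract exactly the desired direction of vanishing; this is a combinatorial bookkeeping exercise that becomes manageable once attention is restricted to weights with bounded entries. Step 3 is conceptually cleaner once one accepts the Koszul resolution of the diagonal, but requires verification that Cauchy's formula selects precisely the Schur bundles indexed by $\sY_{k,n}$ and no others.
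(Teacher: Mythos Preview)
The paper does not prove this theorem: it is stated with a citation to Kapranov \cite{Ka} and no argument is given. Your sketch is essentially Kapranov's original proof (Borel--Weil--Bott for semiorthogonality, the Koszul resolution of the diagonal combined with the Cauchy formula for fullness), and as such it is on the right track and would be accepted as a summary of the literature.

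Two points deserve correction. First, your opening remark that ``fullness after semiorthogonality is equivalent to generating $K_0$'' is false: an exceptional collection whose classes form a basis of $K_0$ need not be full, precisely because phantom categories exist (indeed this very paper works modulo the phantom $\cC_{k,n}$). You do not actually use this claim---you prove fullness directly via the resolution of the diagonal---so the argument survives, but the remark should be deleted.

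Second, the Fourier--Mukai step as you wrote it exhibits an arbitrary $F$ as an iterated extension of the bundles $\Sigma^\lambda\cU$ (or, in the other projection, of the $\Sigma^{\lambda^t}(\cV/\cU)^*$), not directly of the $\Sigma^\lambda\cU^*$. Your phrase ``rewriting each $\Sigma^\lambda\cU_2$ in terms of $\Sigma^{\lambda^*}\cU_2^*$'' hides the point that $\lambda^*=(-\lambda_k,\dots,-\lambda_1)$ does not lie in $\sY_{k,n}$. The clean fix is to observe that dualization $F\mapsto F^\vee$ is an anti-autoequivalence of $\Db(\G(k,n))$ taking the set $\{\Sigma^\lambda\cU\}_{\lambda\in\sY_{k,n}}$ bijectively to $\{\Sigma^\lambda\cU^*\}_{\lambda\in\sY_{k,n}}$, so fullness of one set is equivalent to fullness of the other; alternatively, note that $\Sigma^\lambda\cU\cong\Sigma^{\lambda^c}\cU^*\otimes\cO(k-n)$ with $\lambda^c\in\sY_{k,n}$ the complementary diagram, so the two collections differ by a global twist.
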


\begin{example}
\label{example:gr24-two-collections}
Let us make Kapranov's collection explicit for $\G(2,4)$. The above theorem implies that we have a full exceptional collection
\begin{equation}
\label{Eq.: Kapranov G(2,4)}
\Db(\G(2,4)) = \Big \langle \cO, \, \cU^* , \, S^2 \cU^*; \ \cO(1), \, \cU^*(1) ; \ \cO(2) \Big \rangle.
\end{equation}
Note that \eqref{Eq.: Kapranov G(2,4)} is a Lefschetz collection with the blocks divided by semicolons and support partition~$(3,2,1)$.
However, this collection is not minimal.
Indeed, results of \cite{Ku08a} imply that there is also a full exceptional collection
\begin{equation}
\label{Eq.: Kuznetsov G(2,4)}
\Db(\G(2,4)) = \Big \langle \cO, \, \cU^*;  \  \cO(1), \, \cU^*(1) ; \  \cO(2) ;  \  \cO(3) \Big \rangle.
\end{equation}
Collection \eqref{Eq.: Kuznetsov G(2,4)} is also Lefschetz with support partition~$(2,2,1,1)$ and, clearly,
is smaller than~\eqref{Eq.: Kapranov G(2,4)}.
In fact, it is a minimal Lefschetz collection.
\end{example}

\subsection{Fonarev's exceptional collection}
\label{subsection:fonarev}

In~\cite{Fo} an exceptional collection in the derived category $\Db(\G(k,n))$ generalizing~\eqref{Eq.: Kuznetsov G(2,4)} was suggested.
To describe it we introduce some combinatorics of Young diagrams.
For more details we refer to~\cite{Fo}.

\subsubsection{Cyclic action}
\label{subsubsection:action}

We define an action of the group $\bZ/n\bZ$ on the set $\sY_{k,n}$ by letting the generator act as
\begin{equation}
\label{Eq.: Cyclic action II}
\lambda \mapsto \lambda' =
\begin{cases}
	(\lambda_1+1,\lambda_2+1,\dots,\lambda_k+1), & \text{if } \lambda_1<n-k, \\
	(\lambda_2,\lambda_3,\dots,\lambda_k,0), & \text{if } \lambda_1=n-k.
\end{cases}
\end{equation}
See~\cite[\S3.1]{Fo} for a useful geometric description of this action.

We denote by $o(\lambda)$ the length of the orbit of $\lambda \in \sY_{k,n}$ under this action.
Note that if~$\gcd(k,n) = 1$ then all orbits of this action are free, so that $o(\lambda) = n$ for all $\lambda \in \sY_{k,n}$.

\subsubsection{Lexicographic order}
\label{subsubsection:lexicographic}

Let us define an order on $\sY_{k,n}$ by setting
\begin{equation*}
\lambda < \mu \quad  \Leftrightarrow \quad \exists \, t \in [1,k] \,\,\, \text{such that} \,\,\, \Bigg \lbrace
\begin{array}{ll}
\lambda_i = \mu_i \, ,\quad \text{if} \quad i \in [1, t-1] , \\
\lambda_t < \mu_t \,.
\end{array}
\end{equation*}
Note that the lexicographic order $\leq$ is a total order refining the partial inclusion order $\subseteq$.

\subsubsection{Upper triangular diagrams}

A Young diagram $\lambda \in \sY_{k,n}$ is called {\sf upper triangular} if it lies above the diagonal of the rectangle that goes from the lower-left to the upper-right corner, i.e. we have
\begin{equation}
\label{eq:upper-triangular}
\lambda_i \leq  \frac{(n-k)(k-i)}{k}
\end{equation}
for all $i \in \{1, \dots, k \}$.
By~\cite[Lemma~3.2]{Fo} every orbit of the cyclic group action on~$\sY_{k,n}$ contains an upper triangular representative.
Moreover, if $\gcd(k,n) = 1$, such representative is unique.
We denote the set of upper triangular diagrams by $\sYu_{k,n}$.

\subsubsection{Minimal and strictly upper triangular diagrams}

A Young diagram $\lambda$ is called \textsf{minimal upper triangular} if it is the smallest
among all upper triangular representatives in its orbit with respect to the lexicographic order.
By definition, every orbit of the cyclic group action on~$\sY_{k,n}$ contains a unique minimal upper triangular representative.
Moreover, if $\lambda$ is {\sf strictly upper triangular},
i.e., all inequalities in~\eqref{eq:upper-triangular} for $1 \le i \le k-1$ are strict,
then $\lambda$ is the unique upper triangular representative of its orbit, hence is minimal upper triangular.
We denote the set of minimal upper triangular diagrams by~$\sYmu_{k,n}$.
If $\gcd(k,n) = 1$ all upper triangular diagrams are strictly upper triangular, hence minimal:
$\sYmu_{k,n} = \sYu_{k,n}$.
In general, this is not true as the next example shows.

\begin{example}
\label{Example Y63}

Consider the set $\sY_{3,6}$.
The orbits of the action of $\bZ/6\bZ$ are
\begin{equation*}
\ytableausetup{smalltableaux}\arraycolsep=1em
\begin{array}{lllllll}
		1) & \text{empty} 	& \ydiagram{1,1,1} & \ydiagram{2,2,2} & \ydiagram{3,3,3} & \ydiagram{3,3}   & \ydiagram{3} 	\\[5ex]
		2) & \ydiagram{1}      	& \ydiagram{2,1,1} & \ydiagram{3,2,2} & \ydiagram{2,2}   & \ydiagram{3,3,1} & \ydiagram{3,1} 	\\[5ex]
		3) & \ydiagram{1,1}    	& \ydiagram{2,2,1} & \ydiagram{3,3,2} & \ydiagram{3,2}   & \ydiagram{2}     & \ydiagram{3,1,1} 	\\[5ex]
		4) & \ydiagram{2,1}    	& \ydiagram{3,2,1}
\end{array}
\end{equation*}
Note that there are three orbits of maximal possible length six and one shorter orbit of length two.
The upper triangular diagrams are
\begin{equation*}
\ytableausetup{smalltableaux}
\text{empty}, \qquad
\ydiagram{1}\ , \qquad
\ydiagram{1,1}\ , \qquad
\ydiagram{2}\ , \qquad
\ydiagram{2,1}\ ;
\end{equation*}
the first two are strictly upper triangular,
all except for $\ydiagram{2}$ are minimal.
\end{example}

Now we are ready to introduce the main objects of this paper.
Following \cite[Section~4.1]{Fo} we consider full triangulated subcategories $\cA_i \subset \Db(\G(k,n))$ defined as
\begin{equation}
\label{Eq.: Subcategories A_i}
\cA_i = \Big \langle \, \Sigma^\lambda \cU^*  \, \, \Big| \, \, \lambda \in  \sYmu_{k,n} \, , \, i < o(\lambda) \, \Big \rangle,
\quad \text{for} \quad 0 \leq i \leq n-1,
\end{equation}
where $o(\lambda)$ is the length of the $(\bZ/n\bZ$)-orbit of $\lambda$, see~\S\ref{subsubsection:action}.

Note that if $\gcd(k,n) = 1$, we have $o(\lambda) = n$ for all $\lambda$, hence $\cA_0 = \cA_1 = \dots = \cA_{n-1}$.

\begin{thm}[{\cite[Theorem~4.3 and Proposition~4.8]{Fo}}]
\label{Thm.: SOD Fonarev}
The collection of subcategories
\begin{equation}
\label{Eq.: SOD Fonarev I}
\cA_0, \cA_1(1), \dots , \cA_{n-1}(n-1)
\end{equation}
is semiorthogonal.
Moreover, if $\gcd(k,n) = 1$ the collection~\eqref{Eq.: SOD Fonarev I} generates $\Db(\G(k,n))$,
i.e., there is a Lefschetz rectangular \textup(and minimal\textup) decomposition
\begin{equation}
\label{Eq.: SOD Fonarev II}
\Db(\G(k,n))=  \Big \langle \cA_0, \cA_1(1), \dots , \cA_{n-1}(n-1) \Big \rangle.
\end{equation}
\end{thm}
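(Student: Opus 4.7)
The plan is to separate the two claims. Semiorthogonality reduces to a Borel--Weil--Bott computation on $\G(k,n)$, while fullness under the coprimality hypothesis follows from a comparison of cardinalities with Kapranov's collection (Theorem~\ref{theorem:kapranov}) combined with an inductive argument along cyclic orbits.

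For the semiorthogonality of the family $\cA_0, \cA_1(1), \dots, \cA_{n-1}(n-1)$, the starting observation is that, for $j > i$ and generators $\Sigma^\lambda \cU^* \in \cA_i$, $\Sigma^\mu \cU^* \in \cA_j$, the desired vanishing
\[
\RHom\bigl(\Sigma^\mu \cU^*(j),\, \Sigma^\lambda \cU^*(i)\bigr) \cong H^\bullet\bigl(\G(k,n),\, \Sigma^\lambda \cU^* \otimes \Sigma^\mu \cU \otimes \cO(i-j)\bigr)
\]
is unpacked by decomposing $\Sigma^\lambda \cU^* \otimes \Sigma^\mu \cU$ into irreducible $\GL_k$-summands $\Sigma^\nu \cU^*$ via a Littlewood--Richardson expansion, and then applying Bott's algorithm on $\G(k,n)$ to each twisted summand $\Sigma^\nu \cU^*(i-j)$. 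The combinatorial heart of the argument is the claim that the conditions $i < o(\lambda)$, $j < o(\mu)$, and $\lambda,\mu \in \sYmu_{k,n}$ impose exactly the bounds needed to force Bott's algorithm to produce a singular weight (one fixed by a simple reflection) for every Littlewood--Richardson constituent. Geometrically, the cyclic action on $\sY_{k,n}$ encodes the fact that twisting by $\cO(1)$ shifts a Young diagram by $(1,\dots,1)$, and the orbit length $o(\lambda)$ measures precisely how many consecutive such twists can be performed before the ``column-overflow'' case $\lambda_1 = n-k$ is reached and a Bott-type cancellation intervenes. The case $i = j$ requires, in addition, the lexicographic order on $\sYmu_{k,n}$ within a block, which is inherited from Kapranov's exceptionality.

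For the fullness statement when $\gcd(k,n) = 1$, first observe that every cyclic orbit has length $n$, so $|\sYmu_{k,n}| = |\sYu_{k,n}| = \binom{n}{k}/n$, and the Fonarev collection therefore consists of exactly $\binom{n}{k}$ objects, matching the length of Kapranov's collection. It then suffices to show that every Kapranov generator $\Sigma^\nu \cU^*$ with $\nu \in \sY_{k,n}$ lies in the triangulated subcategory generated by~\eqref{Eq.: SOD Fonarev I}. I would proceed by induction along the cyclic orbit of $\nu$: if $\nu$ is the unique upper triangular representative $\lambda$ of its orbit, then $\Sigma^\lambda \cU^* \in \cA_0$ by definition; otherwise $\nu$ is an iterated cyclic shift of such a $\lambda$, and one moves one step at a time, using the tautological identity $\Sigma^{\lambda'} \cU^* \cong \Sigma^\lambda \cU^*(1)$ in the case $\lambda_1 < n-k$ and a short Koszul-type resolution (derived from the tautological sequence $0 \to \cU \to \cV \to \cV/\cU \to 0$ with an appropriate Schur functor applied) in the boundary case $\lambda_1 = n-k$, expressing $\Sigma^\nu \cU^*$ as an iterated extension of Schur bundles already placed in the Fonarev subcategory.

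I expect the combinatorial bookkeeping in the semiorthogonality step to be the main obstacle: one must verify that the orbit-length condition $i < o(\lambda)$ is sharp for Bott vanishing across every Littlewood--Richardson summand, and, in the fullness step, that the intermediate Schur summands appearing in the boundary resolutions indeed belong to the appropriate $\cA_i(i)$. The latter is an iterated application of the semiorthogonality just proved, combined with a careful tracking of how the cyclic action interacts with the column-overflow resolutions.
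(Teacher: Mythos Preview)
The paper does not give its own proof of this theorem; it is quoted verbatim from Fonarev's paper~\cite{Fo} (as indicated by the attribution ``Theorem~4.3 and Proposition~4.8'' in the theorem heading) and used as a black box. So there is no proof in the paper to compare against, and your proposal is effectively a sketch of how you would reprove Fonarev's result.

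That said, a brief assessment of your sketch on its own merits. The semiorthogonality part is on the right track: Fonarev's argument is indeed a Borel--Weil--Bott computation of exactly the kind you describe, and the paper itself invokes it later (see the proof of Lemma~\ref{lemma:smu-snu}, which appeals to ``the argument from the proof of Theorem~4.3 in~\cite{Fo}''). Your description of the role of the orbit length $o(\lambda)$ is accurate.

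The fullness sketch has a genuine gap. Your ``short Koszul-type resolution'' in the boundary case $\lambda_1 = n-k$ is precisely the staircase complex of Proposition~\ref{Prop.: Staircase complex}, and this is indeed the main tool in Fonarev's proof. But the induction you outline is too naive: the intermediate terms $\Sigma^{\mu_i}\cU^*$ in the staircase complex for $\Sigma^\lambda\cU^*$ are \emph{not} in general upper-triangular diagrams already processed by your induction; they are arbitrary diagrams in $\sY_{k,n}$ obtained by removing boxes from the staircase strip, and you have not explained why they lie in the subcategory generated by the Fonarev collection. Fonarev's actual argument (his Theorem~4.1 and Proposition~4.8) sets up a more delicate induction showing that the subcategory generated by~\eqref{Eq.: SOD Fonarev I} contains $\Sigma^\lambda\cU^*(t)$ for all upper-triangular $\lambda$ and all $0 \le t \le n-1$, and then deduces fullness from Kapranov; the paper's Appendix~\ref{Sec.: Completeness for G(3,3m)} gives a sample of what this induction looks like in the non-coprime case $k=3$. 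Your final paragraph correctly anticipates that ``tracking of how the cyclic action interacts with the column-overflow resolutions'' is the crux, but as written the proposal does not supply it.
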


Since the components of~\eqref{Eq.: SOD Fonarev I} are generated by compatible exceptional collections,
we can think of~\eqref{Eq.: SOD Fonarev I} as of a Lefschetz collection in $\Db(\G(k,n))$.
Its first block is generated by all minimal upper triangular Young diagrams and its support partition $\sigma$ can be obtained as follows.
Let $o = (o(\lambda^{(1)}),o(\lambda^{(2)}),\dots,o(\lambda^{(m)}))$ be the partition
formed by the lengths of the orbits of minimal upper triangular Young diagrams;
then $\sigma = o^T$ is the transposed partition.

It is expected, but not proved yet, that the collection~\eqref{Eq.: SOD Fonarev I} generates $\Db(\G(k,n))$ for all $k$ and $n$.
Without this result, we still have a Lefschetz decomposition
\begin{equation}
\label{Eq.: SOD Fonarev III}
\Db(\G(k,n))=  \Big \langle \cC_{k,n}, \cA_0; \cA_1(1); \dots ; \cA_{n-1}(n-1) \Big \rangle,
\end{equation}
where $\cC_{k,n} \subset \Db(\G(k,n))$ is the orthogonal to the collection~\eqref{Eq.: SOD Fonarev I},
and we consider the subcategory $\langle \cC_{k,n}, \cA_0 \rangle$ in the above decomposition as the starting block.

We will call the subcategory $\cC_{k,n} \subset \Db(\G(k,n))$ defined by~\eqref{Eq.: SOD Fonarev III} the {\sf phantom category} of~$\G(k,n)$.
The reason for that name is that the Grothendieck group and the Hochschild homology of~$\cC_{k,n}$ both vanish.

The expectation that~\eqref{Eq.: SOD Fonarev I} generates $\Db(\G(k,n))$ can be rephrased as follows.

\begin{conj}[{\cite[Conjecture~4.4]{Fo}}]
\label{conjecture:ckn-0}
The phantom category $\cC_{k,n}$ of $\G(k,n)$ vanishes for all~$k$ and $n$, i.e., $\cC_{k,n} = 0$.
\end{conj}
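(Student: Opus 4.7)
The plan is to prove fullness by reducing to Kapranov's theorem: since $\{\Sigma^\lambda \cU^* : \lambda \in \sY_{k,n}\}$ generates $\Db(\G(k,n))$ by Theorem~\ref{theorem:kapranov}, it suffices to show that each such bundle lies in the triangulated subcategory $\cD$ spanned by the Fonarev blocks $\cA_i(i)$ of~\eqref{Eq.: SOD Fonarev I}, which forces $\cC_{k,n} = 0$. As a first sanity check, the numerical count $\sum_{\lambda \in \sYmu_{k,n}} o(\lambda) = |\sY_{k,n}| = \binom{n}{k}$ matches, because each $\bZ/n\bZ$-orbit on $\sY_{k,n}$ contains a unique minimal upper triangular representative. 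So the question is purely one of generation.

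The principal tool I would use is a family of Koszul-type resolutions derived from the tautological sequence $0 \to \cU \to \cV \to \cV/\cU \to 0$, together with Borel--Weil--Bott applied on partial flag bundles over $\G(k,n)$. The cyclic action~\eqref{Eq.: Cyclic action II} admits a geometric interpretation in which passing from $\lambda$ to $\lambda'$ corresponds, up to a twist by $\cO(1)$, to a reorganization of Schur functors via the tautological sequence. Chaining such relations around a maximal-length orbit produces a resolution of $\Sigma^\lambda \cU^*$ by bundles of the form $\Sigma^\mu \cU^*(i)$ with $\mu \in \sYmu_{k,n}$; this is essentially Fonarev's argument in the coprime case and handles every Schur bundle whose orbit has length $n$. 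What remains is to place the bundles indexed by short orbits into $\cD$.

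For short orbits I would attempt a double induction: on the orbit length $o(\lambda)$, using distinguished triangles built from Schur-functoriality to express a short-orbit bundle in terms of twists of longer-orbit bundles already in $\cD$; and on $k$ via a Grassmann fibration $\G(k,n) \to \G(k-1,n)$, pulling back Fonarev-type collections from smaller Grassmannians. The hard part will be the combinatorial control of short-orbit resolutions. When $o(\lambda)$ is a proper divisor of $n$, the cyclic Koszul complex degenerates: several apparently distinct twists of $\Sigma^\lambda \cU^*$ collapse after a partial rotation, and the resulting complex no longer reads off cleanly into the blocks $\cA_i(i)$. Disentangling these cancellations requires a fine analysis of how minimal upper triangular diagrams sit with respect to both the lexicographic order and the stabilizer subgroup of their orbit, as Example~\ref{Example Y63} already hints. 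This is precisely what the paper's appendix accomplishes for $k = 3$ by explicit case analysis; for general $k$ one would likely need either a uniform stabilizer-equivariant Koszul resolution or a new combinatorial identity among Schur bundles indexed by short-orbit diagrams. This step is, in my view, the principal obstacle, and explains why the present paper restricts attention to $k$ prime.
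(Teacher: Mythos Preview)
The statement you are addressing is a \emph{conjecture}, not a theorem: the paper does not prove $\cC_{k,n} = 0$ in general, and your proposal correctly ends by acknowledging that the short-orbit step is an open obstacle. So there is no ``paper's proof'' to compare against here. What the paper does prove is the special case $k = 3$ (Appendix~\ref{Sec.: Completeness for G(3,3m)}), and the method there is exactly the one you sketch in your first two paragraphs: reduce to Kapranov's generators, and use staircase complexes (your ``cyclic Koszul-type resolutions'') to push each $\Sigma^\lambda\cU^*$ into the subcategory generated by the Fonarev blocks. For long orbits this is Fonarev's own argument; for the short orbit in $\sY_{3,3m}$ the appendix handles the non-minimal upper triangular diagrams $(2(m-1),i,0)$ by explicit staircase complexes and a short induction (Lemmas~\ref{Lemma G(3,3m) staicase for non-minimal} and~\ref{Lemma G(3,3m) (a,b,0) induction}). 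Your diagnosis of where the difficulty lies for general $k$ is accurate.

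Two small corrections. First, the restriction to $k = p$ prime in the body of the paper is for Conjecture~\ref{Main Conjecture} (structure of the residual category), not directly for the fullness Conjecture~\ref{conjecture:ckn-0}; the two are linked only through Lemma~\ref{lemma:MC-ckn}, and in fact Theorem~\ref{Theorem} is proved \emph{modulo} $\cC_{p,pm} = 0$ rather than implying it. Second, there is no natural fibration $\G(k,n) \to \G(k-1,n)$; presumably you mean to pass through the partial flag variety $\mathrm{Fl}(k-1,k;n)$, but in any case an inductive approach of this shape is not what the known cases use and would need substantial elaboration to be convincing.
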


This conjecture is a particular case of a more general~\cite[Conjecture~1.10]{Ku14}.

As we already mentioned, so far Conjecture~\ref{conjecture:ckn-0} is known for $\gcd(k,n) = 1$ (\cite[Proposition~4.8]{Fo}),
for $k = 2$ (\cite[Theorem~4.1]{Ku08a}) and $(k,n) = (3,6)$ (\cite[Proposition~5.7]{Fo}).
In Appendix~\ref{Sec.: Completeness for G(3,3m)} we will work out the case $k = 3$ and any $n$.

\begin{example}
Continuing with Example \ref{Example Y63}, on $\G(3,6)$ we get a minimal Lefschetz collection with support partition~$(4,4,3,3,3,3)$
\begin{multline*}
\Db(\G(3,6)) = \Big \langle
\cO , \, \cU^* , \, \Lambda^2 \cU^*  , \Sigma^{(2,1,0)} \cU^*;
\cO(1) , \, \cU^*(1) , \, \Lambda^2 \cU^*(1) , \Sigma^{(2,1,0)} \cU^*(1);  \\
 \cO(2) , \, \cU^*(2) , \, \Lambda^2 \cU^*(2);\  \dots;\
\cO(5) , \, \cU^*(5) , \, \Lambda^2 \cU^*(5) \Big \rangle.
\end{multline*}
As usual, semicolons separate the blocks.
\end{example}

\subsection{Residual categories for $\G(k,n)$}
\label{SubSec.: Residual subcategories G(k,n)}

In this section we consider the derived category $\Db(\G(k,n))$ as a polarized triangulated category
(in the sense of Definition~\ref{def:polarization} and Example~\ref{example:geometric})
with polarization of index $n$ given by the autoequivalence
\begin{equation*}
\tau(-) = - \otimes \cO(1).
\end{equation*}
Note that~\eqref{Eq.: SOD Fonarev III} is a Lefschetz decomposition with respect to this polarization.

Consider a full triangulated subcategory of $\Db(\G(k,n))$ defined as
\begin{equation}
\label{Eq.: Subcategory B}
\fa = \Big\langle \Sigma^\lambda \cU^* \, \Big| \, \lambda \in \sYmu_{k,n} \, , \, o(\lambda) = n \Big \rangle.
\end{equation}
Comparing this with~\eqref{Eq.: Subcategories A_i} we see that $\fa = \cA_{n-1}$. Therefore, by Theorem \ref{Thm.: SOD Fonarev} the subcategories $\fa, \fa(1), \dots, \fa(n-1)$ are semiorthogonal,
i.e., $\fa$ is a primitive subcategory in~$\Db(\G(k,n))$ in the sense of Definition~\ref{definition:primitive}.

We denote by $\cR_{k,n}$ the corresponding residual category, see Definition~\ref{definition:residual}, so that
we have a semiorthogonal decomposition
\begin{equation}
\label{Eq.: Residual G(k,n)}
\Db(\G(k,n)) = \Big \langle \cR_{k,n} \, ,  \fa, \fa(1), \dots, \fa(n-1)  \Big \rangle.
\end{equation}
The category $\cR_{k,n}$ is the main character of the rest of the paper.

A simple consequence of the fullness of Fonarev's collection in the case $\gcd(k,n) = 1$ (Theorem~\ref{Thm.: SOD Fonarev})
is the following.

\begin{cor}
\label{corollary:coprime}
If $\gcd(k,n) = 1$ the residual category $\cR_{k,n}$ vanishes, i.e., $\cR_{k,n} = 0$.
\end{cor}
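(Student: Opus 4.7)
The plan is to derive the corollary as an immediate consequence of Theorem~\ref{Thm.: SOD Fonarev} combined with the coincidence of all the categories $\cA_i$ in the coprime case.

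First I would observe that when $\gcd(k,n)=1$, the $\bZ/n\bZ$-action on $\sY_{k,n}$ defined in~\S\ref{subsubsection:action} is free, so that $o(\lambda)=n$ for every $\lambda \in \sY_{k,n}$. This was already noted explicitly in the text just after~\eqref{Eq.: Cyclic action II}. Feeding this into the defining formula~\eqref{Eq.: Subcategories A_i} for $\cA_i$, the condition $i<o(\lambda)$ is vacuous for all $0 \le i \le n-1$, and therefore
\begin{equation*}
\cA_0 = \cA_1 = \dots = \cA_{n-1}.
\end{equation*}
Moreover, by the same token $\sYmu_{k,n} = \sYu_{k,n}$ and every minimal upper triangular diagram has full orbit, so the definition~\eqref{Eq.: Subcategory B} of $\fa$ reduces to $\fa = \cA_{n-1} = \cA_0$.

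Next I would invoke the second statement of Theorem~\ref{Thm.: SOD Fonarev}: in the coprime case Fonarev's collection~\eqref{Eq.: SOD Fonarev I} is full, giving
\begin{equation*}
\Db(\G(k,n)) = \langle \cA_0, \cA_1(1), \dots, \cA_{n-1}(n-1)\rangle = \langle \fa, \fa(1), \dots, \fa(n-1)\rangle.
\end{equation*}
Comparing this with the semiorthogonal decomposition~\eqref{Eq.: Residual G(k,n)} that defines $\cR_{k,n}$, namely
\begin{equation*}
\Db(\G(k,n)) = \langle \cR_{k,n},\, \fa, \fa(1), \dots, \fa(n-1)\rangle,
\end{equation*}
the uniqueness of the semiorthogonal complement of $\langle \fa, \fa(1), \dots, \fa(n-1)\rangle$ forces $\cR_{k,n}=0$.

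There is no genuine obstacle here: the whole argument is a bookkeeping check that the two decompositions of $\Db(\G(k,n))$ coincide once we identify $\fa$ with each $\cA_i$. The only subtlety worth flagging is the identification $\fa = \cA_{n-1}$, which relies on the fact that in the coprime case every upper triangular diagram is automatically minimal upper triangular, as recorded in~\S\ref{subsubsection:action}.
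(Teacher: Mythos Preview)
Your proof is correct and matches exactly the reasoning the paper leaves implicit: the corollary is stated without proof, as a ``simple consequence'' of Theorem~\ref{Thm.: SOD Fonarev}, and your argument spells out precisely that consequence. One small clarification: the identification $\fa = \cA_{n-1}$ holds for \emph{all} $(k,n)$, not just in the coprime case (it follows directly from comparing the conditions $o(\lambda)=n$ and $n-1<o(\lambda)$, since orbit lengths never exceed $n$), so your final ``subtlety'' is not actually a subtlety.
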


\begin{rmk}
\label{remark:ckn-rkn}
For arbitrary $(k,n)$ the residual category $\cR_{k,n}$ by construction contains the phantom category~$\cC_{k,n}$.
\end{rmk}

\begin{lemma}
\label{lemma:rkn}
The Grothendieck group of the category $\cR_{k,n}$ is a free abelian group of rank
\begin{equation}
\label{eq:rkn-number}
R_{k,n} = - \sum_{\substack{d \mathrel{\,|\,} \gcd(k,n)\\ d > 1}} \mu(d) \binom{n/d}{k/d},
\end{equation}
where the sum is over all common divisors of $k$ and $n$ greater than~$1$, and
$\mu(d)$ is the M\"obius function.
\end{lemma}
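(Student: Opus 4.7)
The plan is to combine the standard behavior of Grothendieck groups under semiorthogonal decompositions with a Möbius-inversion count of free orbits of the cyclic group $\bZ/n$ acting on $\sY_{k,n}$. First, applying $K_0$ to the semiorthogonal decomposition~\eqref{Eq.: Residual G(k,n)} and using that twist by $\cO(i)$ is an equivalence gives
\begin{equation*}
K_0(\Db(\G(k,n))) \iso K_0(\cR_{k,n}) \oplus K_0(\fa)^{\oplus n}.
\end{equation*}
By Theorem~\ref{theorem:kapranov} the left-hand side is free abelian of rank $|\sY_{k,n}| = \binom{n}{k}$, while by the definition~\eqref{Eq.: Subcategory B} the category $\fa$ carries a full exceptional collection indexed by minimal upper-triangular diagrams with $o(\lambda)=n$, so $K_0(\fa)$ is free abelian of rank equal to the number $N_{\mathrm{free}}$ of free $\bZ/n$-orbits on $\sY_{k,n}$ (each free orbit containing a unique minimal upper-triangular representative). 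In particular $K_0(\cR_{k,n})$ is free of rank $\binom{n}{k} - n \cdot N_{\mathrm{free}}$, and it remains to compute $N_{\mathrm{free}}$.

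To count orbits I would use the classical bijection between $\sY_{k,n}$ and the set of $k$-element subsets of $\bZ/n$, assigning to a Young diagram $\lambda$ the positions of the vertical segments of the boundary lattice path inside the $k \times (n-k)$ rectangle. A direct inspection of~\eqref{Eq.: Cyclic action II} shows that the cyclic action on $\sY_{k,n}$ becomes the translation action of $\bZ/n$ on itself (cf.\ \cite[\S3.1]{Fo}). For each divisor $e$ of $n$ the unique subgroup of order $e$ acts on $\bZ/n$ with $n/e$ orbits of size $e$, so a $k$-subset is fixed precisely when it is a union of such orbits; this requires $e \mid k$, and then the number of fixed subsets is $\binom{n/e}{k/e}$. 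Hence only divisors $e \mid \gcd(k,n)$ contribute.

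Standard Möbius inversion over the divisor lattice of $n$ then gives
\begin{equation*}
n \cdot N_{\mathrm{free}}
\;=\; \sum_{e \,\mid\, n} \mu(e) \cdot \bigl|\sY_{k,n}^{\langle n/e \rangle}\bigr|
\;=\; \sum_{e \,\mid\, \gcd(k,n)} \mu(e) \binom{n/e}{k/e},
\end{equation*}
and subtracting from $\binom{n}{k} = \binom{n/1}{k/1}$ (the $e=1$ term, which cancels) yields exactly~\eqref{eq:rkn-number}.

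The main obstacle, such as it is, is not conceptual but notational: one must verify carefully that the action~\eqref{Eq.: Cyclic action II} corresponds to translation under the boundary-path bijection (so that the fixed-point counts are genuinely binomial coefficients), and that the bijection between free orbits and the exceptional objects generating $\fa$ in~\eqref{Eq.: Subcategory B} is exactly one-to-one via the minimal upper-triangular representative, using that each free orbit has a unique such representative by the discussion preceding Example~\ref{Example Y63}.
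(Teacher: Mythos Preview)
Your proposal is correct and follows essentially the same approach as the paper: both reduce via additivity of $K_0$ under the semiorthogonal decomposition~\eqref{Eq.: Residual G(k,n)} to counting elements of $\sY_{k,n}$ lying in free $\bZ/n$-orbits, identify the cyclic action with rotation of binary words (equivalently, translation on $k$-subsets of $\bZ/n$), and finish by M\"obius inversion. The only cosmetic difference is that the paper sets up the inversion through the recursion $\binom{n}{k} = \sum_{d \mid \gcd(k,n)} N_{k/d,n/d}$ coming from a bijection of orbits with those in smaller rectangles, whereas you go directly through fixed-point counts $|\sY_{k,n}^{\langle n/e\rangle}| = \binom{n/e}{k/e}$; these are two standard and equivalent ways to package the same computation.
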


Recall that the M\"obius function is defined by
\begin{equation*}
\mu(d) =
\begin{cases}
\hphantom{-}1, & \text{if $d$ is a square-free integer with an even number of prime factors},\\
	   -1, & \text{if $d$ is a square-free integer with an odd number of prime factors},\\
\hphantom{-}0, & \text{if $d$ has a squared prime factor}.
\end{cases}
\end{equation*}

\begin{proof}
Since the category $\Db(\G(k,n))$ is generated by a full exceptional collection of length $\binom{n}{k}$,
its Grothendieck group is free abelian of rank $\binom{n}{k}$.
Since Grothendieck groups are additive with respect to semiorthogonal decompositions,
it follows that $K_0(\cR_{k,n})$ is free abelian, and
\begin{equation*}
R_{k,n} = \operatorname{\mathrm{rk}}(K_0(\cR_{k,n})) = \binom{n}{k} - n \operatorname{\mathrm{rk}}(K_0(\fa)).
\end{equation*}
Furthermore, since $\fa$ is generated by the exceptional collection~\eqref{Eq.: Subcategory B},
and since each orbit of the cyclic action on $\sY_{k,n}$ of length $n$ contains a unique minimal upper triangular representative,
we have
\begin{equation*}
n \operatorname{\mathrm{rk}}(K_0(\fa)) = \# \{ \lambda \in \sY_{k,n} \mid o(\lambda) = n \}.
\end{equation*}
Denote this number by $N_{k,n}$, so that $R_{k,n} = \binom{n}{k} - N_{k,n}$ is the number of Young diagrams~$\lambda \in \sY_{k,n}$ with $o(\lambda) < n$. It remains to compute $N_{k,n}$.

For this we use the following observation.
We can think of Young diagrams in $\sY_{k,n}$ as of grid paths in a rectangle of size $k \times (n-k)$
going from the lower left to upper right corner, and encode them by length-$n$ binary words with exactly $k$ zeros
($0$ encodes a vertical segment of a path and $1$ encodes a horizontal segment).
Then the cyclic action on Young diagrams corresponds to cyclic permutation of symbols in binary words.
If the length of the orbit of $\lambda$ is less than $n$,
then (possibly after a cyclic permutation of symbols) $\lambda$ is a concatenation of several equal binary words.
If $d$ is the number of such words, the length of the orbit is $n/d$.
This shows that there is a bijection between Young diagrams~$\lambda \in \sY_{k,n}$ with $o(\lambda) = n/d$ and
Young diagrams $\mu \in \sY_{k/d,n/d}$ with $o(\mu) = n/d$. Therefore
\begin{equation*}
\binom{n}{k} = \sum_{d \mathrel{\,|\,} \gcd(k,n)} N_{k/d,n/d}.
\end{equation*}
Using the M\"obius inversion formula~\cite[Example~3.8.4]{Stanley}, we deduce
\begin{equation*}
N_{k,n} = \sum_{d \mathrel{\,|\,} \gcd(k,n)} \mu(d) \binom{n/d}{k/d},
\end{equation*}
This immediately implies the desired formula~\eqref{eq:rkn-number}.
\end{proof}

Since the small quantum cohomology of $\G(k,n)$ is generically semisimple (see \cite{GaGo}), in view of the above lemma and Conjecture~\ref{conjecture:vague}, we propose the following conjecture.

\begin{conj}
\label{Main Conjecture}
The residual category $\cR_{k,n}$ defined by~\eqref{Eq.: Residual G(k,n)} is generated
by a completely orthogonal exceptional sequence of length~\eqref{eq:rkn-number}.
\end{conj}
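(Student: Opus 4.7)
The plan is to exploit the polarization $\tau_{\cR}=\bL_{\fa}\circ\tau$ on $\cR_{k,n}$ provided by Theorem~\ref{thm:residul-polarization}. A completely orthogonal exceptional collection in a polarized category is permuted (up to shift) by the polarization, and the HMS picture sketched in the introduction suggests the $R_{k,n}$ generators should split into $\tau_{\cR}$-orbits indexed by the short cyclic orbits on $\sY_{k,n}$ --- one object per short-orbit diagram, so that the $\tau_{\cR}$-orbit lengths match the underlying cyclic orbit lengths. This is consistent with Lemma~\ref{lemma:rkn}, which shows that $R_{k,n}$ equals the total number of diagrams $\lambda\in\sY_{k,n}$ with $o(\lambda)<n$.

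First I would construct the candidate generators. For each $\lambda\in\sYmu_{k,n}$ with $o(\lambda)<n$ and each $0\le i<o(\lambda)$, set
\begin{equation*}
F_{\lambda}^{(i)}:=\bL_{\langle\fa,\fa(1),\dots,\fa(n-1)\rangle}\bigl(\Sigma^{\lambda}\cU^{*}\otimes\cO(i)\bigr)\in\cR_{k,n}.
\end{equation*}
By Theorem~\ref{Thm.: SOD Fonarev}, the objects $\Sigma^{\lambda}\cU^{*}\otimes\cO(i)$ with $\lambda\in\sYmu_{k,n}$ and $0\le i<o(\lambda)$ form an exceptional collection in~$\Db(\G(k,n))$, and those with $o(\lambda)=n$ generate the rectangular part $\langle\fa,\fa(1),\dots,\fa(n-1)\rangle$. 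Assuming Conjecture~\ref{conjecture:ckn-0}, the remaining ones project to a generating exceptional collection of $\cR_{k,n}$ of length exactly $R_{k,n}$.

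The main step is to establish complete orthogonality. The key intermediate claim is that $\tau_{\cR}$ cyclically permutes each family $\{F_{\lambda}^{(i)}\}_{0\le i<o(\lambda)}$ up to a fixed shift, with $\tau_{\cR}^{o(\lambda)}(F_{\lambda}^{(i)})\cong F_{\lambda}^{(i)}[s_{\lambda}]$ for some $s_{\lambda}\in\bZ$. Using the computation of iterates of $\tau_{\cR}$ in~\eqref{eq:tau-i}, this reduces to the mutation identity
\begin{equation*}
\bL_{\langle\fa,\tau(\fa),\dots,\tau^{o(\lambda)-1}(\fa)\rangle}\bigl(\Sigma^{\lambda}\cU^{*}\otimes\cO(o(\lambda))\bigr)\cong\Sigma^{\lambda}\cU^{*}[s_{\lambda}],
\end{equation*}
reflecting the fact that the cyclic action~\eqref{Eq.: Cyclic action II} returns $\lambda$ to itself after $o(\lambda)$ steps. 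I expect this identity, or the equivalent vanishing of the Ext-groups between $\Sigma^{\lambda}\cU^{*}\otimes\cO(o(\lambda))$ and appropriate twists of $\fa$, to be isolated as a separate technical conjecture, whose verification is the heart of the argument. Granted the claim, complete orthogonality follows by a standard argument: within each $\tau_{\cR}$-orbit the shift action combined with the exceptionality of the collection forces all off-diagonal Ext-groups to vanish in both directions, and the same argument between different orbits uses the stronger semiorthogonality inherited from Fonarev's decomposition~\eqref{Eq.: SOD Fonarev III}.

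The hardest step is the verification of the mutation identity, and this is where the restriction $k=p$ prime enters. For $(k,n)=(p,pm)$, the bijection in the proof of Lemma~\ref{lemma:rkn} shows that the only possible orbit lengths are $n$ and $n/p=m$, so every short orbit has length exactly $m$, reducing the identity to a single tractable case. That case should then be amenable to a direct computation with Schur functors and Borel--Weil--Bott on $\G(p,pm)$, together with the Littlewood--Richardson rule to control the relevant morphism spaces between Schur bundles. In full generality the tower of orbit lengths $n/d$ for $d\mid\gcd(k,n)$ substantially complicates the combinatorics and is the main obstacle to proving the conjecture beyond the prime case.
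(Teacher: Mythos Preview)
Your overall strategy coincides with the paper's: project the short-orbit bundles $\Sigma^\lambda\cU^*(i)$ into $\cR_{k,n}$, isolate a technical input for each short diagram, and deduce complete orthogonality. The paper's formulation of that input is Conjecture~\ref{conjecture:technical}: for each short $\mu$ with $o(\mu)=n/d$, the object $\rC_\mu$ in the triangle $\rC_\mu\to\Sigma^\mu\cU^*\to\Sigma^\mu\cU^*(-n/d)[k(n-k)/d]$ lies in $\langle\famp(-\tfrac{n}{d}),\fa(1-\tfrac{n}{d}),\dots,\fa(-1),\famm\rangle$. This containment implies your cyclic $\tau_\cR$-action and hence $\SS_\cR(F_\lambda^{(i)})\cong F_\lambda^{(i)}$, so your Serre-duality route to complete orthogonality is valid; the paper instead checks the four relevant Ext-vanishings directly (the lemmas inside the proof of Proposition~\ref{proposition:conjectures-implication}) and records the $\tau_\cR$-action separately (Proposition~\ref{proposition:serre-rkn}). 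Your route is arguably cleaner conceptually, but the paper's sharper containment also carries the filtration on $\rC_\mu$ that realizes each $F_\mu^i$ as a shift of a vector bundle.

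Two points need correction. First, your displayed mutation identity cannot hold as written: its left-hand side lies in $\fa^\perp$, but $\Sigma^\lambda\cU^*$ does not (already $\Hom(\cO,\Sigma^\lambda\cU^*)\ne0$), so the right-hand side must be $F_\lambda^{(0)}=\bL_\fa(\Sigma^\lambda\cU^*)$ rather than $\Sigma^\lambda\cU^*$ itself. Second, for $k=p$ the paper does not rely on Borel--Weil--Bott or Littlewood--Richardson as you suggest. It observes that the \emph{unique} short diagram is $\theta_{p,pm}=((p-1)(m-1),\dots,m-1,0)$ and verifies Conjecture~\ref{conjecture:technical} for it by writing down Fonarev's staircase complex (Proposition~\ref{Prop.: Staircase complex}): an explicit exact sequence on $\G(p,pm)$ whose extreme terms are $\Sigma^{\theta_{p,pm}}\cU^*(-m)$ and $\Sigma^{\theta_{p,pm}}\cU^*$, with every intermediate term of the form $\Sigma^\alpha\cU^*(t)$ for $\alpha\in\sYmu_{p,pm}$ of full orbit length. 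This single resolution delivers the required containment of $\rC_{\theta_{p,pm}}$ at once and is the concrete tool your sketch is missing.
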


We note that this conjecture implies Conjecture~\ref{conjecture:ckn-0}.

\begin{lemma}
\label{lemma:MC-ckn}
If Conjecture~\textup{\ref{Main Conjecture}} holds, then the phantom category $\cC_{k,n}$ vanishes.
\end{lemma}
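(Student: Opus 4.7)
The plan is to compare the decomposition of the residual category coming from Conjecture~\ref{Main Conjecture} with the inclusion $\cC_{k,n} \subseteq \cR_{k,n}$ noted in Remark~\ref{remark:ckn-rkn}. Under the conjecture, $\cR_{k,n}$ is generated by a completely orthogonal exceptional collection $(F_1, \dots, F_{R_{k,n}})$, and the first step is to observe that complete orthogonality upgrades this into a genuine direct-sum decomposition of triangulated categories
\begin{equation*}
\cR_{k,n} \simeq \bigoplus_{i=1}^{R_{k,n}} \langle F_i \rangle,
\end{equation*}
with each summand equivalent to $\Db(\Bbbk)$. Consequently every object $X \in \cR_{k,n}$ admits a canonical decomposition $X \simeq \bigoplus_i X_i$ with $X_i \in \langle F_i \rangle$, and each $X_i$ is then also a direct summand of $X$ in the ambient category $\Db(\G(k,n))$, since fully faithful embeddings preserve finite direct sums.

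The next step is to exploit that $\cC_{k,n}$ is admissible in $\Db(\G(k,n))$ by construction, and therefore thick, i.e., closed under direct summands. Applied to $X \in \cC_{k,n} \subseteq \cR_{k,n}$, thickness forces every $X_i$ to lie in $\cC_{k,n}$. Each such $X_i$ is a finite direct sum of shifts of the exceptional object $F_i$, so if some $X_i$ were nonzero then a shift $F_i[n]$ would appear as a direct summand of $X_i$, and a second invocation of thickness would place $F_i$ itself in $\cC_{k,n}$. But the class $[F_i] \in K_0(\cR_{k,n}) \cong \bZ^{R_{k,n}}$ is the $i$-th standard generator, whereas the map of Grothendieck groups induced by the inclusion $\cC_{k,n} \hookrightarrow \cR_{k,n}$ factors through $K_0(\cC_{k,n}) = 0$ (the defining property of the phantom). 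This is a contradiction, so every $X_i$ vanishes, $X = 0$, and $\cC_{k,n} = 0$.

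I do not expect a serious obstacle, since the argument is essentially formal and relies on three ingredients: (i) the splitting of $\cR_{k,n}$ into an orthogonal direct sum of copies of $\Db(\Bbbk)$, which follows from complete orthogonality together with saturation of the residual category (Theorem~\ref{thm:residul-polarization}); (ii) thickness of any admissible subcategory; and (iii) the vanishing of $K_0$ of a phantom. The only point that perhaps deserves a brief sanity check is that the summands $X_i$ of an $X \in \cR_{k,n}$ are genuine summands in $\Db(\G(k,n))$, but this is automatic from full faithfulness of the embedding $\cR_{k,n} \hookrightarrow \Db(\G(k,n))$.
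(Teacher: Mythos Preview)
Your argument is correct, but it follows a different line than the paper's. The paper proceeds via Serre functors: since a completely orthogonal exceptional collection generates a category equivalent to the derived category of a finite reduced scheme, the Serre functor of $\cR_{k,n}$ is trivial; hence the Serre functor of the admissible subcategory $\cC_{k,n}\subset\cR_{k,n}$ is trivial as well, so $\cC_{k,n}$ is a Calabi--Yau category with vanishing Hochschild homology, which forces $\cC_{k,n}=0$ by \cite[Corollary~5.3]{Ku15}. Your route is more elementary --- it uses only the orthogonal splitting of $\cR_{k,n}$ into copies of $\Db(\Bbbk)$, thickness of $\cC_{k,n}$ (as a right orthogonal), and the vanishing of $K_0(\cC_{k,n})$ --- and avoids any appeal to Serre functors or to the cited external result. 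The paper's argument, on the other hand, isolates the triviality of $\SS_{\cR_{k,n}}$ as the essential point, which is reused in the remark immediately following the lemma to show that, conversely, triviality of the Serre functor of $\cR_{k,n}$ would imply both Conjectures~\ref{conjecture:ckn-0} and~\ref{Main Conjecture}.
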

\begin{proof}
If Conjecture~\ref{Main Conjecture} holds the Serre functor of the residual category $\cR_{k,n}$ is trivial
(since a category generated by a completely orthogonal exceptional collection
is equivalent to the derived category of a finite reduced scheme).
Therefore, the Serre functor of the phantom category $\cC_{k,n} \subset \cR_{k,n}$ (see Remark~\ref{remark:ckn-rkn})
is also trivial, i.e., $\cC_{k,n}$ is a Calabi--Yau category.
But a Calabi--Yau category with zero Hochschild homology is itself zero by~\cite[Corollary~5.3]{Ku15}.
Thus $\cC_{k,n} = 0$.
\end{proof}

\begin{rmk}
As we observed in the proof of Lemma~\ref{lemma:MC-ckn}, Conjecture~\ref{Main Conjecture} implies triviality of the Serre functor of $\cR_{k,n}$.
Conversely, if we could establish triviality of the Serre functor of $\cR_{k,n}$,
then Conjectures~\ref{conjecture:ckn-0} and~\ref{Main Conjecture} would follow.

Indeed, vanishing of the phantom category $\cC_{k,n}$ was deduced in the proof of Lemma~\ref{lemma:MC-ckn},
and it implies the semiorthogonal decomposition~\eqref{Eq.: SOD Fonarev II}.
Applying Proposition~\ref{prop:bijection-lefschetz} (i.e., projecting to $\cR_{k,n}$
the representatives of the short orbits in the Fonarev's collection)
we obtain an exceptional collection of length $R_{k,n}$ generating $\cR_{k,n}$.
By triviality of the Serre functor of $\cR_{k,n}$ it is completely orthogonal.
\end{rmk}

If $\gcd(k,n) = 1$, we have $R_{k,n} = 0$ and indeed, the category $\cR_{k,n}$ vanishes by Theorem~\ref{Thm.: SOD Fonarev}.
If $k = p$ is a prime number the residual category is nontrivial only for $n$ divisible by~$p$;
and in this case we have $R_{p,n} = \binom{n/p}{1} = n/p$.
The main result of this paper is a proof of Conjecture~\ref{Main Conjecture} in the case $k = p$ (modulo Conjecture~\ref{conjecture:ckn-0}).

\begin{thm}\label{Theorem}
Let $p$ be a prime number.
The residual category $\cR_{p,pm}$ is generated by the phantom category $\cC_{p,pm}$
and a completely orthogonal exceptional sequence of vector bundles of length $m$, i.e. there is a semiorthogonal decomposition
\begin{equation}
\label{eq:sod-residual}
\cR_{p,pm} = \langle \cC_{p,pm}, F_0, F_1, \dots, F_{m-1} \rangle,
\end{equation}
where $F_i$ form a completely orthogonal exceptional collection of vector bundles.
In particular, if $\cC_{p,pm} = 0$ then Conjecture~\textup{\ref{Main Conjecture}} holds for $(k,n) = (p,pm)$.
Moreover, the induced polarization $\tau_\cR$ of~$\cR_{p,pm}$ permutes the bundles $F_i$ up to shift,
and preserves the phantom category $\cC_{p,pm}$.
\end{thm}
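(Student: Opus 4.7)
First I would pin down the combinatorics. For $p$ prime, the only nontrivial common divisor of $p$ and $pm$ is $p$ itself, so the cyclic $\bZ/n\bZ$-action on $\sY_{p,pm}$ has only two possible orbit lengths, $n=pm$ and $m=n/p$. Encoding Young diagrams as length-$n$ binary words with $p$ zeros, a short-orbit word is necessarily the $p$-th power of a length-$m$ word with a single zero; hence there is exactly one short orbit and it consists of $m$ diagrams. Checking~\eqref{eq:upper-triangular} on each representative shows that the unique (and hence minimal) upper triangular element of this orbit is the ``staircase''
\[
\lambda^\star = \bigl((p-1)(m-1),\,(p-2)(m-1),\,\dots,\,(m-1),\,0\bigr).
\]
Consequently $\cA_i = \cA_0 = \langle \fa,\,\Sigma^{\lambda^\star}\cU^*\rangle$ for $0 \le i<m$ and $\cA_i = \fa$ for $m \le i \le n-1$ in Fonarev's collection.

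Next, I would apply Proposition~\ref{prop:bijection-lefschetz} to the Lefschetz decomposition~\eqref{Eq.: SOD Fonarev III}, noting that every component $\cT_i$ contains $\fa$. The induced Lefschetz decomposition of the residual category takes the form
\[
\cR_{p,pm} = \big\langle \cC_{p,pm},\, \cB,\, \tau_\cR(\cB),\, \dots,\, \tau_\cR^{m-1}(\cB) \big\rangle,
\]
where $\cB := \cA_0 \cap \fa^\perp$ is generated by the single object $B := \bL_\fa(\Sigma^{\lambda^\star}\cU^*)$. Setting $F_i := \tau_\cR^i(B)$ then gives an exceptional sequence of length $m$ in $\cR_{p,pm}$, and semiorthogonality of this Lefschetz decomposition gives one half of complete orthogonality, $\RHom(F_j,F_i)=0$ for $i<j$, for free. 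The Grothendieck-group count of Lemma~\ref{lemma:rkn} confirms that the $F_i$ span $\cR_{p,pm}$ modulo $\cC_{p,pm}$.

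Two substantive tasks remain: (a) the reverse Ext vanishing $\RHom(F_i, F_j)=0$ for $i<j$, upgrading the exceptional collection to a completely orthogonal one; and (b) the cyclic invariance $\tau_\cR^m(B) \iso B[s']$ for some shift $s'$, which together with $\tau_\cR(F_i)=F_{i+1}$ for $i<m-1$ yields the cyclic permutation up to shift and forces $\tau_\cR$ to preserve $\cC_{p,pm}$. I would recast both statements using $\tau_\cR^k \iso \bL_{\langle\fa,\fa(1),\dots,\fa(k-1)\rangle}\circ\tau^k$ from~\eqref{eq:tau-i}; after that the required computations reduce to $\RHom$'s on $\G(p,pm)$ among twists of $\Sigma^{\lambda^\star}\cU^*$ and the long-orbit Schur bundles generating $\fa$. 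These are in principle accessible via the Borel--Weil--Bott theorem by pulling back to the full flag variety and tracking the resulting dominant/singular weights. Following the outline in the Introduction, I would extract the precise vanishings as a stand-alone technical statement (Conjecture~\ref{conjecture:technical}), show that this implies Conjecture~\ref{Main Conjecture} for arbitrary $(k,n)$ by exactly the Lefschetz-plus-mutation bookkeeping just sketched, and then verify the technical conjecture for $(k,n)=(p,pm)$ using the primality of $p$.

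The main obstacle is the Borel--Weil--Bott verification underlying step (a). Complete orthogonality is a genuinely two-sided condition, so $\Ext^\bullet$ has to vanish in every degree, not merely in the direction guaranteed by semiorthogonality. The combinatorial accident that makes the argument go through is that for prime $p$ the short orbits all have the same length $m$ and there is a single short-orbit generator $\Sigma^{\lambda^\star}\cU^*$; this removes any interference between distinct short-orbit bundles and reduces every needed vanishing to a question about the staircase $\lambda^\star$, whose rigid and highly symmetric shape keeps the weight combinatorics tractable. For composite $k$ the lattice of proper divisors of $\gcd(k,n)$ would produce short orbits of several different lengths, and the cancellations that isolate a completely orthogonal sequence can no longer be organized by a single uniform argument, which explains why the theorem is restricted to prime $p$.
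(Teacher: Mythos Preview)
Your outline tracks the paper's strategy closely: identify $\lambda^\star = \theta_{p,pm}$ as the unique short diagram, pass through Proposition~\ref{prop:bijection-lefschetz} to produce the exceptional sequence $F_i = \tau_\cR^i(B)$ in $\cR_{p,pm}$, and reduce both the remaining orthogonality and the cyclic invariance to Conjecture~\ref{conjecture:technical}. This reduction is exactly what Proposition~\ref{proposition:conjectures-implication} and Proposition~\ref{proposition:serre-rkn} carry out, so structurally you are on the paper's path.

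The gap is in how you propose to verify Conjecture~\ref{conjecture:technical} for $\mu = \theta_{p,pm}$. You describe it as ``precise vanishings'' accessible via Borel--Weil--Bott, but~\eqref{eq:cmu-inclusion} is not a vanishing statement: it asserts that the cone $\rC_\mu$ of the unique morphism $\Sigma^\mu\cU^* \to \Sigma^\mu\cU^*(-m)[p(m-1)]$ lies in the subcategory $\langle \famp(-m),\fa(1-m),\dots,\fa(-1),\famm\rangle$. Establishing this requires exhibiting an actual filtration of $\rC_\mu$ with graded pieces in the listed components, and BWB alone does not manufacture such a filtration. The paper's device is Fonarev's \emph{staircase complex} (Proposition~\ref{Prop.: Staircase complex}): applied to $\theta_{p,pm}(m-1)$ and twisted back by $\cO(1-m)$, it yields the long exact sequence~\eqref{Eq.: Staircase complex special} whose endpoints are $\Sigma^{\theta_{p,pm}}\cU^*(-m)$ and $\Sigma^{\theta_{p,pm}}\cU^*$ and whose intermediate terms are checked (Lemma~\ref{lemma:theta-km}) to lie in $\fa(1-m),\dots,\fa(-1),\famm$. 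This realizes $\rC_\mu$ explicitly as the required iterated extension. Note that this verification uses nothing about the primality of $p$; it holds for $\theta_{k,km}$ with arbitrary $k$. Primality enters only to ensure that $\theta_{p,pm}$ is the \emph{unique} short diagram, not to prove~\eqref{eq:cmu-inclusion} for it.

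You also do not address the claim that the $F_i$ are (shifts of) \emph{vector bundles}, which is part of the theorem. The paper gets this from the same staircase complex: each $F_{\theta_{p,pm}}^i$ is quasi-isomorphic to a truncation of the exact sequence~\eqref{Eq.: Staircase complex special}, hence has a single cohomology sheaf admitting a right resolution by locally free sheaves, and is therefore itself locally free. A direct BWB computation of $\RHom$-spaces gives no access to this.
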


The proof will be given in the next section.
Now let us mention again that the case~$p = 2$ was treated previously in \cite[Theorem 9.5]{CMKMPS}.
Besides the case $k = p$ we managed to prove an analogue of Theorem~\ref{Theorem} in the case of $\G(4,8)$ (see Remark~\ref{remark:g48}).

\section{Short diagrams conjecture and residual category}
\label{section:k=p}

In this section we state a different (more technical) conjecture and show that it
implies Conjecture~\ref{Main Conjecture} modulo vanishing of the phantom category~$\cC_{k,n}$.
After that we check that this conjecture holds for $\G(p,pm)$ and prove Theorem~\ref{Theorem}.

\subsection{Short diagrams conjecture}

If $\mu \in \sYmu_{k,n}$ is a minimal upper triangular Young diagram with $o(\mu) < n$,
we will say that $\mu$ is a {\sf short diagram}, and its $(\bZ/n\bZ)$-orbit is a {\sf short orbit}. Recall from the proof of Lemma~\ref{lemma:rkn} that the orbit length of any short diagram~$\mu$ can be written as $o(\mu) = n/d$, where $d > 1$ is a common divisor of $k$ and $n$.

Let $\mu$ be a short diagram with $o(\mu) = n/d$, so that $d > 1$.
Then
\begin{equation*}
\Ext^{k(n-k)/d}(\Sigma^\mu\cU^*, \Sigma^\mu\cU^*(-n/d)) \cong \Bbbk.
\end{equation*}
This follows from an iterated application of~\cite[Lemma~5.1]{Fo}, or can be proved directly
by the same argument (see also the proof of Lemma~\ref{lemma:smu-snu} below).
Consider the object $\rC_\mu \in {} \Db(\G(k,n))$ defined as the shifted cone of the corresponding morphism,
so that we have a distinguished triangle
\begin{equation}
\label{eq:cmu}
\rC_\mu  \xrightarrow{\quad} \Sigma^\mu\cU^* \xrightarrow{\quad} \Sigma^\mu\cU^*(-n/d)[k(n-k)/d].
\end{equation}
On the other hand, define the subcategories
\begin{equation}
\label{eq:ampm}
\begin{aligned}
\famp &= \Big\langle \Sigma^\lambda \cU^* \mid \lambda \in \sYmu_{k,n}\, ,\ o(\lambda) = n,\ \mu {}\subset{} \lambda \Big \rangle
{} \subset \fa,\\
\famm &= \Big\langle \Sigma^\lambda \cU^* \mid \lambda \in \sYmu_{k,n}\, ,\ o(\lambda) = n,\ \mu {}\supset{} \lambda \Big \rangle
{} \subset \fa.
\end{aligned}
\end{equation}
They are generated by exceptional collections, hence are admissible subcategories in~$\fa$.

\begin{conj}
\label{conjecture:technical}
For any short diagram $\mu \in \sYmu_{k,n}$ with $o(\mu) = n/d$, \mbox{$d > 1$} one has
\begin{equation}
\label{eq:cmu-inclusion}
\rC_\mu \in \Big\langle \famp(-\tfrac{n}{d}), \fa(1-\tfrac{n}{d}), \dots, \fa(-1), \famm \Big\rangle,
\end{equation}
where the object $\rC_\mu$ is defined by the distinguished triangle~\eqref{eq:cmu}.
\end{conj}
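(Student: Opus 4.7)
The plan is to prove Conjecture~\ref{conjecture:technical} in the case $(k,n)=(p,pm)$ with $p$ prime, which is the case needed for Theorem~\ref{Theorem}. Since $\gcd(p,pm)=p$, the only possible short-orbit length is $n/p=m$, and the bijection in the proof of Lemma~\ref{lemma:rkn} (short diagrams in $\sY_{p,pm}$ correspond to diagrams in $\sY_{1,m}$) gives a \emph{unique} short orbit, corresponding to the periodic word $(01^{m-1})^p$. Its lexicographically smallest upper triangular representative is the ``scaled staircase''
\begin{equation*}
\mu = \bigl((p-1)(m-1),\ (p-2)(m-1),\ \dots,\ (m-1),\ 0\bigr) \in \sYmu_{p,pm},
\end{equation*}
so there is a single short diagram to handle, and the cohomological shift in~\eqref{eq:cmu} equals $k(n-k)/d = p(m-1)$.

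My approach is to realise the canonical generator of $\Ext^{p(m-1)}(\Sigma^\mu\cU^*, \Sigma^\mu\cU^*(-m))\cong\Bbbk$ (which exists by the analogue of \cite[Lemma~5.1]{Fo} and Borel--Bott--Weil) as the Yoneda class of an explicit $p(m-1)$-fold extension
\begin{equation*}
0 \to \Sigma^\mu\cU^*(-m) \to G_{p(m-1)} \to \dots \to G_1 \to \Sigma^\mu\cU^* \to 0,
\end{equation*}
to be built by a suitable modification of Fonarev's ``staircase complex'' \cite[\S3]{Fo}. That complex resolves long-orbit Schur functors in length $n$; here one should collapse or symmetrise it along the $\bZ/p$-periodicity of $\mu$ to obtain a shorter resolution of length $p(m-1)$. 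With such a resolution in hand, $\rC_\mu$ becomes quasi-isomorphic to the complex of intermediate terms $G_\bullet$, and the conjectural inclusion reduces to placing each $G_j$ in the appropriate piece of $\langle \famp(-m),\fa(1-m),\dots,\fa(-1),\famm\rangle$.

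The combinatorial core is then to verify that every summand $\Sigma^\nu\cU^*(-i)$ appearing in a $G_j$ has $\nu \in \sYmu_{p,pm}$ with $o(\nu)=n$ (so that $\Sigma^\nu\cU^* \in \fa$), and for the boundary degrees also satisfies the correct inclusion relation with $\mu$: $\nu \subseteq \mu$ when $i=0$ (landing in $\famm$) and $\nu \supseteq \mu$ when $i=m$ (landing in $\famp(-m)$). The staircase shape of $\mu$ makes such containment comparisons explicit, and primality of $p$ is used crucially: no divisor $q$ satisfies $1<q<p$, hence no intermediate short orbit length $n/q$ can arise, which rules out the most problematic class of summands that might appear in $G_j$.

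The hardest step, by some margin, will be the explicit construction of the resolution and the identification of its terms. Fonarev's staircase handles generic long-orbit diagrams; for the short orbit of $\mu$ the desired shortened version is not in the literature and has to be constructed ad hoc, with BBW vanishing used to eliminate non-minimal or non-upper-triangular summands. Once this construction and its combinatorial analysis are in place, the desired membership $\rC_\mu \in \langle \famp(-m),\fa(1-m),\dots,\fa(-1),\famm \rangle$ follows from the standard fact that an iterated extension of objects inside a semiorthogonal decomposition stays inside it.
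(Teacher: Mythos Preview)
Your overall strategy is the same as the paper's, but you have overcomplicated the key step through a misconception about the staircase complex. Fonarev's staircase complex (Proposition~5.3 in \cite{Fo}, restated in the paper as Proposition~\ref{Prop.: Staircase complex}) has $n-k$ intermediate terms, not $n$; for $(k,n)=(p,pm)$ this is exactly $p(m-1)=k(n-k)/d$, the cohomological length you need. So no ``collapsing or symmetrising along the $\bZ/p$-periodicity'' is required, and nothing has to be constructed ad hoc.

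Concretely, apply the ordinary staircase complex to $\lambda=\mu(m-1)$, which satisfies the hypothesis $\lambda_1=n-k$. Because $\mu$ is periodic, the cyclic action gives $\lambda'=\mu$, so the leftmost term of the staircase is $\Sigma^{\lambda'(-1)}\cU^*=\Sigma^{\mu}\cU^*(-1)$. Twisting the whole complex by $\cO(1-m)$ yields
\[
0 \to \Sigma^\mu\cU^*(-m) \to G_{p(m-1)} \to \dots \to G_1 \to \Sigma^\mu\cU^* \to 0,
\]
which is precisely the $p(m-1)$-fold extension you were looking for. This is exactly what the paper does in Lemma~\ref{lemma:theta-km}.

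The combinatorial verification is then straightforward rather than ``hardest'': every intermediate diagram $\alpha_i$ is obtained from $\mu=\theta_{p,pm}$ either by stripping boxes off the outer border (twist~$0$ terms) or by deleting initial columns (twists $1-m,\dots,-1$), so $\alpha_i\subseteq\mu$ in all cases. Upper-triangularity is immediate, and minimality follows because the first row of $\alpha_i$ has length ${}<{}(p-1)m$ while every other diagram in its orbit has first row of length ${}\ge{}(p-1)m$; primality of $p$ is not even needed for this step (the paper proves~\eqref{eq:cmu-inclusion} for $\theta_{k,km}$ with arbitrary $k$), it is used only to ensure $\theta_{p,pm}$ is the \emph{unique} short diagram. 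Note also that no intermediate term carries the twist $-m$, so the component $\famp(-m)$ in~\eqref{eq:cmu-inclusion} is not actually used here.
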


Here is the main result of this subsection.

\begin{prop}
\label{proposition:conjectures-implication}
If Conjecture~\textup{\ref{conjecture:technical}} holds for some $(k,n)$,
then the residual category~$\cR_{k,n}$ is generated by the phantom category $\cC_{k,n}$
and a completely orthogonal exceptional sequence of length~\eqref{eq:rkn-number}.
\end{prop}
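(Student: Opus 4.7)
The plan is to construct the desired orthogonal sequence by projecting the short-orbit generators of Fonarev's collection into $\cR_{k,n}$, extract a periodicity from Conjecture~\ref{conjecture:technical}, and promote Lefschetz semi-orthogonality to complete orthogonality via Serre duality.

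First I would apply Proposition~\ref{prop:bijection-lefschetz} to the Lefschetz decomposition~\eqref{Eq.: SOD Fonarev III}, taking $\cT_0 = \langle \cC_{k,n}, \cA_0\rangle$ and $\cT_i = \cA_i$ for $i \ge 1$, all of which contain $\fa = \cA_{n-1}$. This produces a Lefschetz decomposition of $\cR_{k,n}$ of length $n$. The long-orbit generators of $\cA_i$ lie in $\fa$ and are killed by $\bL_\fa$, so the $i$-th component $\cR_i = \cT_i \cap \fa^\perp$ is generated by the mutations $F_\mu := \bL_\fa(\Sigma^\mu\cU^*)$ for short minimal $\mu$ with $o(\mu) > i$, together with $\cC_{k,n}$ when $i=0$. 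Using the Fonarev semi-orthogonality $\RHom(\cA_i(i), \fa(j)) = 0$ for $j > i$, the full projection of $\Sigma^\mu\cU^*(i)$ onto $\cR_{k,n}$ can be identified with $F_\mu^{(i)} := \tau_\cR^i(F_\mu)$. Counting all such objects for $0 \le i \le o(\mu) - 1$ yields $\sum_\mu o(\mu) = R_{k,n}$ exceptional objects, matching the target length.

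Next I would extract the periodicity. Twisting the triangle~\eqref{eq:cmu} by $n/d$, where $d = n/o(\mu) > 1$, gives
\begin{equation*}
\rC_\mu(n/d) \longrightarrow \Sigma^\mu\cU^*(n/d) \longrightarrow \Sigma^\mu\cU^*[k(n-k)/d].
\end{equation*}
Conjecture~\ref{conjecture:technical} places $\rC_\mu(n/d)$ in $\langle \famp, \fa(1), \dots, \fa(n/d-1), \famm(n/d) \rangle$, and each summand here is contained in some $\fa(j)$ with $0 \le j \le n-1$, hence is annihilated by projection to $\cR_{k,n}$. Applying this projection to the triangle therefore yields $\tau_\cR^{n/d}(F_\mu) \cong F_\mu[k(n-k)/d]$. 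Iterating $d$ times gives $\tau_\cR^n(F_\mu) \cong F_\mu[k(n-k)]$. Since $\tau_\cR$ is a polarization of $\cR_{k,n}$ of index $n$ with shift $s = \dim \G(k,n) = k(n-k)$ by Theorem~\ref{thm:residul-polarization}, the relation $\tau_\cR^n \cong \SS_\cR^{-1}[k(n-k)]$ forces $\SS_\cR(F_\mu^{(i)}) \cong F_\mu^{(i)}$ for every $i$.

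Finally, I would deduce complete orthogonality via Serre duality. The Lefschetz structure from the first step furnishes the vanishings $\RHom(F_\nu^{(j)}, F_\mu^{(i)}) = 0$ for $j > i$, and within each block a total order refining inclusion of Young diagrams gives the residual semi-orthogonality $\RHom(F_\nu, F_\mu) = 0$ for $\nu$ later than $\mu$. Serre duality combined with $\SS_\cR F = F$ then reverses each of these vanishings, producing complete orthogonality of the $F_\mu^{(i)}$; each one is exceptional because it arises from an exceptional object via mutations against semi-orthogonal subcategories. The asserted decomposition $\cR_{k,n} = \langle \cC_{k,n}, \{F_\mu^{(i)}\}\rangle$ is then a repackaging of the Lefschetz decomposition from the first step. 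Beyond Conjecture~\ref{conjecture:technical} itself, the delicate point is arranging the twists in the periodicity so that the Serre functor acts as the identity rather than by some residual shift; the coincidence $s = k(n-k)$ is exactly what allows the Serre-duality argument to yield complete orthogonality instead of merely a cyclic equivalence.
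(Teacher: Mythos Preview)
Your approach is correct and takes a genuinely different route from the paper. The paper defines $F_\mu^i$ via explicit truncations $\rC_\mu^{\ge -i}$, $\rC_\mu^{<-i}$ of the object $\rC_\mu$ (using the fine structure of the conjectured inclusion~\eqref{eq:cmu-inclusion}), obtains two triangles~\eqref{eq:fmu-right} and~\eqref{eq:fmu-left} for each $F_\mu^i$, and then checks complete orthogonality by hand: it computes the four resulting $\Ext$-groups between the pieces of $F_\mu^i$ and $F_\nu^j$ in Lemmas~\ref{lemma:ort-cu}--\ref{lemma:smu-snu}. Only afterwards, in Proposition~\ref{proposition:serre-rkn}, does the paper establish the periodicity $\tau_\cR^{o(\mu)}(F_\mu) \cong F_\mu[k(n-k)/d]$ and the Serre-triviality $\SS_\cR(F_\mu^i) \cong F_\mu^i$.

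You reverse this order: you extract the periodicity first, using only the coarse consequence of~\eqref{eq:cmu-inclusion} that $\rC_\mu(n/d)$ lies in $\langle \fa, \fa(1), \dots, \fa(n-1)\rangle$ and is therefore annihilated by projection to $\cR_{k,n}$. Serre duality then upgrades the Lefschetz semiorthogonality (which you already have from Proposition~\ref{prop:bijection-lefschetz}) to complete orthogonality in one stroke. This is more economical in its use of the conjecture and avoids the four case-by-case lemmas. The paper's route, on the other hand, produces the explicit resolutions of $F_\mu^i$ that are later needed to verify these objects are shifts of vector bundles in the proof of Theorem~\ref{Theorem}.

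One small slip: the Fonarev semiorthogonality you invoke should read $\RHom(\fa(j), \cA_i(i)) = 0$ for $j > i$, not the reverse; this is the direction needed to reduce the full projection of $\Sigma^\mu\cU^*(i)$ to $\bL_{\langle \fa, \dots, \fa(i)\rangle}(\Sigma^\mu\cU^*(i))$. The argument itself is unaffected.
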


The objects that form a completely orthogonal exceptional sequence are defined explicitly
in the proof of the proposition --- these are the objects $F_\mu^i$
defined by the triangles~\eqref{eq:fmu-right}.
The objects $F_\mu^i$ are expected to be shifts of vector bundles (cf.~the proof of Theorem~\ref{Theorem}).

\begin{proof}
For each $0 \le i < n/d$ consider the natural semiorthogonal decomposition
\begin{multline*}
\Big\langle \famp(-\tfrac{n}{d}), \fa(1-\tfrac{n}{d}), \dots, \fa(-1), \famm \Big\rangle \\ =
\bigg\langle
\Big\langle \famp(-\tfrac{n}{d}), \fa(1-\tfrac{n}{d}), \dots, \fa(-1-i) \Big\rangle,
\Big\langle \fa(-i), \dots, \fa(-1), \famm \Big\rangle
\bigg\rangle
\end{multline*}
and let $\rC_\mu^{<-i}$, $\rC_\mu^{\ge -i}$ be the components of $\rC_\mu$ with respect to it,
so that we have a distinguished triangle
\begin{equation}
\label{eq:cmu-triangle}
\rC_\mu^{\ge -i} \to \rC_\mu \to \rC_\mu^{<-i}
\end{equation}
and inclusions
\begin{align}
\label{eq:cmu+i}
\rC_\mu^{\ge -i} &\in \Big\langle \fa(-i), \dots, \fa(-1), \famm \Big\rangle,\\
\label{eq:cmu-i}
\rC_\mu^{<-i} &\in \Big\langle \famp(-\tfrac{n}{d}), \fa(1-\tfrac{n}{d}), \dots, \fa(-1-i) \Big\rangle.
\end{align}
Consider the composition $\rC_\mu^{\ge -i} \to \rC_\mu \to \Sigma^\mu\cU^*$
of the first map in~\eqref{eq:cmu-triangle} and the first map in~\eqref{eq:cmu},
and define the object $F_\mu^i$ as the cone of its twist by $\cO(i)$, so that we have a distinguished triangle
\begin{equation}
\label{eq:fmu-right}
\rC_\mu^{\ge -i}(i) \to \Sigma^\mu\cU^*(i) \to F_\mu^i.
\end{equation}
Below we show that when $\mu$ runs over the set of short diagrams and~\mbox{$0 \le i < o(\mu)$}
the objects $F_\mu^i$ generate the category $\cR_{k,n}$ modulo $\cC_{k,n}$ and are completely orthogonal.

To start with, let us check that
\begin{equation}
\label{eq:cone}
F_\mu^i \cong \bL_{\langle \fa, \dots, \fa(i-1), \famm(i) \rangle}(\Sigma^\mu\cU^*(i)).
\end{equation}
Indeed, it is enough to show that~\eqref{eq:fmu-right} is the mutation triangle, i.e., that
\begin{equation*}
\rC_\mu^{\ge -i}(i) \in \langle \fa, \dots, \fa(i-1), \famm(i) \rangle
\qquad\text{and}\qquad
F_\mu^i \in \langle \fa, \dots, \fa(i-1), \famm(i) \rangle^\perp.
\end{equation*}
The first of these inclusions is just equivalent to~\eqref{eq:cmu+i}. To prove the second inclusion note that the octahedron axiom
(applied to
the triangles~\eqref{eq:cmu} and~\eqref{eq:cmu-triangle} twisted by $\cO(i)$, and~\eqref{eq:fmu-right})
implies that $F_\mu^i$ also fits into a distinguished triangle
\begin{equation}
\label{eq:fmu-left}
\Sigma^\mu\cU^*(i-n/d)[k(n-d)/d-1] \to \rC_\mu^{<-i}(i) \to F_\mu^i.
\end{equation}
So, it remains to note that we have even stronger inclusions
\begin{equation*}
\Sigma^\mu\cU^*(i-n/d) \in \langle \fa, \fa(1), \dots, \fa(i) \rangle^\perp
\quad\text{and}\quad
\rC_\mu^{<-i}(i) \in \langle \fa, \fa(1), \dots, \fa(i) \rangle^\perp.
\end{equation*}
Indeed, the first of these inclusions follows from the semiorthogonality claim in Theorem~\ref{Thm.: SOD Fonarev}
since $\Sigma^\mu\cU^* \in \cA_0$, while $\fa(n/d-i+j) \subset \cA_{n/d-i+j}(n/d-i+j)$
for each~\mbox{$0 \le j \le i$} since $0 < n/d-i+j \le n/d < n$.
The second inclusion follows from~\eqref{eq:cmu-i}
in view of semiorthogonality of the collection $\fa(i - \tfrac{n}{d}),\dots,\fa(-1),\fa,\dots,\fa(i)$.
This proves~\eqref{eq:cone}.

Note also that
\begin{equation*}
\Sigma^\mu\cU^*(i) \in {}^\perp\langle \fa, \dots, \fa(i-1), \famm(i) \rangle.
\end{equation*}
Indeed, semiorthogonality to $\langle \fa, \dots, \fa(i-1) \rangle$ follows from Theorem~\ref{Thm.: SOD Fonarev}
and semiorthogonality with $\famm(i)$ holds by~\eqref{eq:ampm}.
Since $\Sigma^\mu\cU^*$ is exceptional, it follows from~\eqref{eq:cone} that $F_\mu^i$ is exceptional as well.

By Proposition~\ref{prop:bijection-lefschetz} applied to~\eqref{Eq.: SOD Fonarev III}
we conclude that the residual category $\cR_{k,n}$ is generated by the phantom category $\cC_{k,n}$
and the objects $F_\mu^i$ for all short diagrams $\mu$ and all~$0 \le i < o(\mu)$,
and moreover, that $\Ext^\bullet(F_\mu^i,F_\nu^j) = 0$ if $i > j$.

So, it remains to show that if diagrams $\mu$ and $\nu$ and integers $i$ and $j$ are such that
\begin{equation}
\label{eq:assumptions}
\begin{cases}
\mu,\nu \in \sYmu_{k,n}, \quad
o(\mu) = n/d < n,\quad
o(\nu) = n/e < n,\\
0 \le i < n/d,\quad
0 \le j < n/e,\quad\text{and}\quad
i \le j,
\end{cases}
\end{equation}
then $\Ext^\bullet(F_\mu^i,F_\nu^j) = 0$ unless $\mu = \nu$ and $i = j$.

To show this we use triangle~\eqref{eq:fmu-right} for $F_\mu^i$ and~\eqref{eq:fmu-left} for $F_\nu^j$.
Consequently, it remains to show that then next four spaces vanish:
\begin{align*}
&\Ext^\bullet( \rC_\mu^{\ge -i}(i), \Sigma^\nu\cU^*(j-n/e) ), &&
 \Ext^\bullet( \Sigma^\mu\cU^*(i),  \Sigma^\nu\cU^*(j-n/e) ), \\
&\Ext^\bullet( \rC_\mu^{\ge -i}(i), \rC_\nu^{<-j}(j)   ), &&
 \Ext^\bullet( \Sigma^\mu\cU^*(i),  \rC_\nu^{<-j}(j)   ).
\end{align*}
This is done in the next four lemmas, which thus finish the proof of the proposition.
\end{proof}

\begin{lemma}
\label{lemma:ort-cu}
We have $\Ext^\bullet( \rC_\mu^{\ge -i}(i), \Sigma^\nu\cU^*(j-n/e) ) = 0$.
\end{lemma}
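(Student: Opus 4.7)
The plan is to translate the desired $\Ext$ vanishing into right-orthogonality of $\Sigma^\nu\cU^*(j-n/e)$ to an explicit subcategory that contains $\rC_\mu^{\ge -i}(i)$, and then to deduce this orthogonality from the semiorthogonality of Fonarev's blocks after a suitable common twist. Twisting the inclusion~\eqref{eq:cmu+i} by $\cO(i)$ yields
\[
\rC_\mu^{\ge -i}(i) \in \langle \fa, \fa(1), \dots, \fa(i-1), \famm(i) \rangle,
\]
so the lemma reduces to showing that $\Sigma^\nu\cU^*(j - n/e)$ is right-orthogonal to this subcategory.

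Since $\famm \subset \fa$ and $\fa$ is generated by the exceptional bundles $\Sigma^\lambda\cU^*$ with $\lambda \in \sYmu_{k,n}$ of orbit length $o(\lambda) = n$, the required orthogonality reduces to verifying that
\[
\Ext^\bullet\bigl(\Sigma^\lambda\cU^*(p),\, \Sigma^\nu\cU^*(j-n/e)\bigr) = 0
\]
for every such $\lambda$ and every $p \in \{0, 1, \dots, i\}$. To handle all of these uniformly, I would twist both arguments by $\cO(q)$ with $q := n/e - j + p$, reformulating the vanishing as
\[
\Ext^\bullet\bigl(\Sigma^\lambda\cU^*(q),\, \Sigma^\nu\cU^*\bigr) = 0.
\]
Here $\Sigma^\nu\cU^* \in \cA_0$ sits in the first block of Fonarev's decomposition, while $\Sigma^\lambda\cU^*(q) \in \cA_q(q)$ sits in its $q$-th block (using $o(\lambda) = n > q$, which requires $q \le n-1$). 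Provided in addition that $q \ge 1$, the two objects lie in strictly different blocks and Theorem~\ref{Thm.: SOD Fonarev} immediately forces the $\Ext$ space to vanish.

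The only remaining point is the arithmetic check $1 \le q \le n - 1$. The lower bound follows at once from $j < n/e$ and $p \ge 0$, and the upper bound from $p \le i \le n/d - 1$ combined with the elementary inequality $n/d + n/e \le n$ for integers $d, e \ge 2$ dividing $n$; in the boundary case $d = e = 2$ the strict bound $i < n/d$ still gives $q \le n-1$. No deeper structural input beyond~\eqref{eq:cmu+i} and Fonarev's semiorthogonality is required, and I do not foresee any genuine obstacle in this lemma; it should serve as a warm-up for the three more substantive vanishings that follow.
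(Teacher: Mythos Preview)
Your proof is correct and follows essentially the same route as the paper: twist \eqref{eq:cmu+i} by $\cO(i)$, then invoke Fonarev's semiorthogonality after a global twist placing $\Sigma^\nu\cU^*(j-n/e)$ in the zeroth block and the pieces of $\rC_\mu^{\ge -i}(i)$ in later blocks, with the same arithmetic inequalities $j-n/e<0$ and $n/e-j+i\le n-1$. One small slip of phrasing: you say ``twist both arguments by $\cO(q)$ with $q := n/e - j + p$'', but the common twist is by $\cO(n/e - j)$, after which the first argument carries twist $q = p + n/e - j$; the displayed vanishing and the subsequent arithmetic are unaffected.
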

\begin{proof}
By~\eqref{eq:cmu+i} we have an inclusion
\begin{equation}
\label{eq:cmu-twisted}
\rC_\mu^{\ge -i}(i) \in \langle \fa,\dots,\fa(i-1),\famm(i) \rangle
\end{equation}
On the other hand, $\Sigma^\nu\cU^*(j-n/e) \in \cA_0(j-n/e)$.
Note also that
\begin{equation*}
j - n/e < 0
\qquad\text{and}\qquad
j - n/e + n \ge n - n/e \ge n/2 \ge n/d > i
\end{equation*}
by~\eqref{eq:assumptions}.
Thus semiorthogonality in~\eqref{Eq.: SOD Fonarev I} twisted by $j-n/e$ applies.
\end{proof}

\begin{lemma}
\label{lemma:ort-cc}
We have $\Ext^\bullet( \rC_\mu^{\ge -i}(i), \rC_\nu^{<-j}(j)   ) = 0$.
\end{lemma}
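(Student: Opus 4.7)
My plan is to mirror the argument of Lemma~\ref{lemma:ort-cu} almost verbatim, with the single sheaf $\Sigma^\nu\cU^*(j - n/e)$ replaced by the subcategory in which $\rC_\nu^{<-j}(j)$ lives. The strategy is to twist the Fonarev semiorthogonal collection~\eqref{Eq.: SOD Fonarev I} by $\cO(j - n/e)$ and to locate both $\rC_\mu^{\ge -i}(i)$ and $\rC_\nu^{<-j}(j)$ inside disjoint consecutive blocks of this twisted SOD, with the former strictly to the right of the latter; the semiorthogonality of the twisted Fonarev collection will then give the desired vanishing.

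First I would observe that, by~\eqref{eq:cmu-i} applied to $\nu$ and twisted by $\cO(j)$,
\[
\rC_\nu^{<-j}(j) \in \big\langle \fanp(j - n/e),\ \fa(j - n/e + 1),\ \dots,\ \fa(-1) \big\rangle.
\]
Since $\fanp \subset \fa \subset \cA_s$ for every $s$, each piece $\fa(b)$ (and the leading piece $\fanp(j-n/e)$) is contained in the block $\cA_{b - j + n/e}(b)$ of the twisted Fonarev collection. As $b$ runs through $[j - n/e,\,-1]$, these block indices fill the range $[0,\,n/e - j - 1]$.

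Symmetrically, from~\eqref{eq:cmu+i} twisted by $\cO(i)$, $\rC_\mu^{\ge -i}(i)$ belongs to $\langle \fa,\fa(1),\dots,\fa(i-1),\famm(i) \rangle$, and each piece sits in the block of index $a - j + n/e$ for $a \in [0, i]$, i.e., in the range $[n/e - j,\,n/e + i - j]$. This range is strictly to the right of $[0,\,n/e - j - 1]$, so provided both ranges lie in $[0,\,n-1]$ the semiorthogonality of Theorem~\ref{Thm.: SOD Fonarev} (twisted by $\cO(j - n/e)$) immediately gives the vanishing.

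The only numerical check --- and the single potential source of difficulty --- is the upper bound $n/e + i - j \le n - 1$. Using $i \le n/d - 1$ and $j \ge 0$, it suffices to show $n/d + n/e \le n$; this is equivalent to $(d-1)(e-1) \ge 1$, which holds because $d$ and $e$ are integer divisors of $\gcd(k,n)$ with $d, e \ge 2$. Beyond this elementary check I do not anticipate any obstacle: the argument is really just a twisted version of the block-counting already used in Lemma~\ref{lemma:ort-cu}.
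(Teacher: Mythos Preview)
Your proposal is correct and follows essentially the same route as the paper's proof: both locate $\rC_\mu^{\ge -i}(i)$ and $\rC_\nu^{<-j}(j)$ in consecutive blocks of a twisted semiorthogonal collection and reduce to the numerical inequality $n/e + i - j \le n-1$ (equivalently $j - n/e + n > i$), which both derive from $d,e \ge 2$. The only cosmetic difference is that the paper works directly with the $\fa$-blocks of~\eqref{Eq.: Residual G(k,n)}, while you pass through the larger $\cA_s$-blocks of~\eqref{Eq.: SOD Fonarev I}; since $\fa \subset \cA_s$ for all $s$, this makes no difference.
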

\begin{proof}
We still have~\eqref{eq:cmu-twisted}.
On the other hand,
\begin{equation}
\label{eq:cmu-j}
\rC_\nu^{<-j}(j) \in \langle \fanp(j-\tfrac{n}{e}), \fa(j-\tfrac{n}{e}+1), \dots, \fa(-1) \rangle
\end{equation}
by~\eqref{eq:cmu-i}.
We still have $j - n/e + n > i$ (similarly to Lemma~\ref{lemma:ort-cu}),
so semiorthogonality in~\eqref{Eq.: Residual G(k,n)} twisted by $j - n/e$ applies.
\end{proof}

\begin{lemma}
We have $\Ext^\bullet( \Sigma^\mu\cU^*(i),  \rC_\nu^{<-j}(j)   ) = 0$.
\end{lemma}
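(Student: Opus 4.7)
The plan is to flip the Ext computation via Serre duality so that the twists appearing on the target become positive, and then invoke the semiorthogonality of Fonarev's collection (Theorem~\ref{Thm.: SOD Fonarev}) directly.

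The inclusion~\eqref{eq:cmu-i} twisted by $\cO(j)$ reads
\begin{equation*}
\rC_\nu^{<-j}(j) \in \Big\langle \famp_\nu(j - n/e),\ \fa(j - n/e + 1),\ \ldots,\ \fa(-1) \Big\rangle,
\end{equation*}
with all twists strictly negative, whereas $\Sigma^\mu\cU^*(i)$ has twist $i \ge 0$, so Fonarev semiorthogonality cannot be applied as it stands. Since $\omega_{\G(k,n)} \cong \cO(-n)$, Serre duality followed by a twist by $\cO(n - i)$ converts the desired vanishing into the equivalent statement
\begin{equation*}
\Ext^\bullet(\rC_\nu^{<-j}(j + n - i),\ \Sigma^\mu\cU^*) = 0,
\end{equation*}
and the new target now lies in
\begin{equation*}
\Big\langle \famp_\nu(j + n - n/e - i),\ \fa(j + n - n/e - i + 1),\ \ldots,\ \fa(n - 1 - i) \Big\rangle.
\end{equation*}

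It therefore suffices to show $\Ext^\bullet(\fa(s), \Sigma^\mu\cU^*) = 0$ for every twist $s$ appearing in this expression (the $\famp_\nu$-summand is absorbed since $\famp_\nu \subset \fa$). I will verify that every such $s$ lies in $\{1, \ldots, n - 1\}$. The upper bound $s \le n - 1 - i \le n - 1$ is immediate, and combining the hypotheses $j \ge 0$, $i \le n/d - 1$, $d, e \ge 2$ (so $1/d + 1/e \le 1$) yields the lower bound
\begin{equation*}
s \ge j + n - n/e - i \ge n(1 - 1/d - 1/e) + 1 \ge 1.
\end{equation*}
Since $\fa = \cA_{n-1} \subset \cA_s$ for every $s \le n - 1$, each $\fa(s)$ is contained in the Fonarev component $\cA_s(s)$; together with $\Sigma^\mu\cU^* \in \cA_0$ and $s > 0$, semiorthogonality of~\eqref{Eq.: SOD Fonarev I} gives $\Ext^\bullet(\cA_s(s), \cA_0) = 0$, and the lemma follows.

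The substance of the argument lies in the twist arithmetic; the key idea is that Serre duality combined with the polarization period $n$ bridges between the negative-twist region where the components of $\rC_\nu^{<-j}(j)$ reside and the positive-twist region where the Fonarev semiorthogonality operates. Once this bridge is in place, the vanishing is automatic.
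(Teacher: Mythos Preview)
Your proof is correct and follows the same approach as the paper: apply Serre duality to swap source and target, then use the arithmetic of the twists to place everything inside the range where Fonarev semiorthogonality applies. The only cosmetic difference is that the paper leaves the target as $\Sigma^\mu\cU^*(i-n)$ and invokes semiorthogonality of~\eqref{Eq.: SOD Fonarev I} twisted by $i-n$, whereas you further twist everything by $\cO(n-i)$ so that the target sits in $\cA_0$; the twist-range check $1 \le s \le n-1$ is identical in both versions.
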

\begin{proof}
By Serre duality, this is equivalent to
\begin{equation*}
\Ext^\bullet( \rC_\nu^{<-j}(j), \Sigma^\mu\cU^*(i-n)) = 0
\end{equation*}
We still have~\eqref{eq:cmu-j} and $j - n/e > i - n$.
Moreover, $i > -1$ by~\eqref{eq:assumptions}.
So semiorthogonality in~\eqref{Eq.: SOD Fonarev I} twisted by $i-n$ applies.
\end{proof}

\begin{lemma}
\label{lemma:smu-snu}
If $i \le j$ we have $\Ext^\bullet( \Sigma^\mu\cU^*(i),  \Sigma^\nu\cU^*(j-n/e) ) = 0$ unless $\mu = \nu$ and $i = j$.
\end{lemma}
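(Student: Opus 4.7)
My strategy is to reduce via twists, apply Fonarev's semiorthogonality in the bulk of the range, and dispatch the boundary case via a direct Borel--Weil--Bott computation. Twisting both arguments by $\cO(-i)$ and then by $\cO(-s)$ with $s := j - i - n/e$, the claim becomes
\begin{equation*}
\Ext^\bullet(\Sigma^\mu\cU^*(t), \Sigma^\nu\cU^*) = 0 \qquad\text{for}\qquad t := -s = n/e + i - j \in [1, n/e],
\end{equation*}
with vanishing required unless $\mu = \nu$ and $t = n/e$ (equivalently $i = j$). This last case is precisely the nonvanishing class of degree $k(n-k)/e$ used to define $\rC_\mu$ in~\eqref{eq:cmu}.

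For $t < n/d := o(\mu)$, the object $\Sigma^\mu\cU^*(t)$ sits in the Fonarev component $\cA_t(t)$ by~\eqref{Eq.: Subcategories A_i}, while $\Sigma^\nu\cU^* \in \cA_0$; since $t > 0$, Fonarev's semiorthogonality from Theorem~\ref{Thm.: SOD Fonarev} delivers the vanishing at once.

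For $t \ge n/d$ (which forces $d \ge e$), $\Sigma^\mu\cU^*(t)$ no longer lies in the Fonarev decomposition, so I would compute the Ext directly as sheaf cohomology,
\begin{equation*}
\Ext^\bullet(\Sigma^\mu\cU^*(t), \Sigma^\nu\cU^*) \cong H^\bullet\bigl(\G(k,n),\, \Sigma^\mu\cU \otimes \Sigma^\nu\cU^* \otimes \cO(-t)\bigr),
\end{equation*}
decomposing $\Sigma^\mu\cU \otimes \Sigma^\nu\cU^*$ into irreducible $\GL_k$-equivariant summands $\Sigma^\gamma\cU^*$ (with $\gamma$ a dominant $\GL_k$-weight, allowing negative entries) via Littlewood--Richardson and applying Borel--Weil--Bott to each twisted summand $\Sigma^{\gamma - (t,\ldots,t)}\cU^*$. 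A summand contributes only when the $\GL_n$-weight $(\gamma_1 - t, \ldots, \gamma_k - t, 0, \ldots, 0) + (n-1, n-2, \ldots, 0)$ is regular, in which case its cohomology is concentrated in the degree equal to the number of inversions of the sorting permutation.

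The main obstacle is the combinatorial check in this second case: given the upper-triangularity bounds $\mu_i, \nu_i \le (n-k)(k-i)/k$ with $\mu_k = \nu_k = 0$ and the narrow range $t \in [n/d, n/e]$, I expect at most one Littlewood--Richardson summand to produce a regular $\rho$-shifted weight. This should happen exactly when $\mu = \nu$ and $t = n/d = n/e$, at which point a distinguished $\gamma$ (traceable to the $n/d$-periodicity of the binary word encoding $\mu$) yields a class in degree $k(n-k)/d$ --- matching the construction of $\rC_\mu$. All other candidate $\gamma$'s must exhibit a repetition in the $\rho$-shifted weight, killing their Bott cohomology. For example, on $\G(2,4)$ with $\mu = \nu = (1)$ and $t = 2$, one checks that the trivial summand $\cO \subset \cU \otimes \cU^*$ gives the repetition $(1, 0, 1, 0)$ and hence vanishes, while the summand $\Sigma^{(1,-1)}\cU^* \subset \cU \otimes \cU^*$ produces the regular weight $(2, -1, 1, 0)$ contributing exactly $\Bbbk$ in degree $2 = k(n-k)/d$, in agreement with the defining triangle for $\rC_\mu$.
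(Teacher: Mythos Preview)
Your reduction and the range $t = n/e + i - j \in [1, n/e]$ match the paper exactly. But the case split on $t < n/d$ versus $t \ge n/d$ is unnecessary, and you leave the second case as an unproven expectation (``I expect at most one Littlewood--Richardson summand\ldots''). The paper avoids this by invoking, uniformly for all $t$ in the range, the criterion established in the proof of \cite[Theorem~4.3]{Fo}: the cohomology $H^\bullet(\G(k,n), \Sigma^\mu\cU \otimes \Sigma^\nu\cU^*(-t))$ is nonzero if and only if $\nu$ is the $t$-th iterate of $\mu$ under the cyclic action~\eqref{Eq.: Cyclic action II}. Since $\mu$ and $\nu$ are both minimal upper triangular, lying in the same orbit forces $\mu = \nu$; then $t$ must be a multiple of $o(\mu) = o(\nu) = n/e$, and the bound $0 < t \le n/e$ forces $t = n/e$, i.e., $i = j$.

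Your step~2 is really a packaged form of this same criterion (it is how Fonarev proves the semiorthogonality you cite), and your step~3, if carried out, would amount to reproving the criterion in a restricted range. The Borel--Weil--Bott analysis you sketch is precisely Fonarev's argument; rather than redoing it by hand, cite the criterion directly. This also cleanly covers the cases your sketch does not explicitly address, e.g.\ $\mu \ne \nu$ with $n/d \le t \le n/e$ when $d > e$.
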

\begin{proof}
The required vanishing is equivalent to
\begin{equation*}
H^\bullet(\G(k,n),\Sigma^\mu\cU \otimes \Sigma^\nu\cU^*(-t)) = 0
\end{equation*}
unless $\mu = \nu$ and $t = n/e$, where $t = i + n/e - j$, so that by~\eqref{eq:assumptions} we have
\begin{equation*}
0 < t {}\le{} n/e.
\end{equation*}
To prove this we use the argument from the proof of Theorem~4.3 in~\cite{Fo}.
It is shown there that the cohomology space is non-trivial if and only if $\nu$ is obtained from $\mu$
by the $t$-th iteration of the cyclic group action.
But since we assumed that both $\mu$ and $\nu$ are minimal upper triangular, it follows that $\mu = \nu$
and $t$ is proportional to $o(\mu) = o(\nu) = n/e$.
Using the above inequality, we deduce $t = n/e$.
\end{proof}

As an extra result we describe the action of the induced polarization $\tau_{\cR_{k,n}}$ (see Theorem~\ref{thm:residul-polarization})
of the residual category and of its Serre functor $\SS_{\cR_{k,n}}$.

\begin{prop}
\label{proposition:serre-rkn}
Set $\cR = \cR_{k,n}$. If the inclusion~\eqref{eq:cmu-inclusion} holds for a short diagram $\mu$ with~$o(\mu) = n/d$, then
\begin{equation*}
\tau_\cR(F_\mu^i) =
\begin{cases}
F_\mu^{i+1}, & \text{for $0 \le i \le n/d-2$}\\
F_\mu^0[k(n-k)/d],  & \text{for $i = n/d - 1$.}
\end{cases}
\end{equation*}
In particular, $\SS_\cR(F_\mu^i) \cong F_\mu^i$ for the diagram $\mu$ and all $0 \le i < o(\mu)$.
\end{prop}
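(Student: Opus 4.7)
The plan is to compute $\tau_\cR(F_\mu^i) = \bL_\fa(F_\mu^i(1))$ directly from the characterization~\eqref{eq:cone} of $F_\mu^i$ as the left mutation $\bL_{\langle \fa, \dots, \fa(i-1), \famm(i) \rangle}(\Sigma^\mu\cU^*(i))$. Twisting by $\cO(1)$ and composing with $\bL_\fa$ via the identity
$$\bL_\fa \circ \bL_{\langle \fa(1), \dots, \fa(i), \famm(i+1) \rangle} = \bL_{\langle \fa, \fa(1), \dots, \fa(i), \famm(i+1) \rangle},$$
which holds because Fonarev's semiorthogonality of $\fa, \fa(1), \dots, \fa(n-1)$ puts $\fa$ to the left of the other factors, one obtains for $0 \le i \le n/d - 2$ precisely the mutation defining $F_\mu^{i+1}$ via~\eqref{eq:cone}. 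The first case of the proposition follows at once.

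For $i = n/d - 1$ the same composition gives $\tau_\cR(F_\mu^{n/d-1}) = \bL_{\langle \fa, \fa(1), \dots, \fa(n/d-1), \famm(n/d) \rangle}(\Sigma^\mu\cU^*(n/d))$, and the heart of the proof is to extract the shift $[k(n-k)/d]$ and recognize $F_\mu^0$. I will rotate triangle~\eqref{eq:cmu} and twist by $\cO(n/d)$ to obtain the distinguished triangle
$$\Sigma^\mu\cU^*[s-1] \to \rC_\mu(n/d) \to \Sigma^\mu\cU^*(n/d),$$
with $s = k(n-k)/d$. Hypothesis~\eqref{eq:cmu-inclusion} twisted by $\cO(n/d)$ places $\rC_\mu(n/d)$ in $\langle \famp, \fa(1), \dots, \fa(n/d-1), \famm(n/d) \rangle \subset \langle \fa, \fa(1), \dots, \fa(n/d-1), \famm(n/d) \rangle$ (using $\famp \subset \fa$), so the mutation kills the middle term and exactness reduces the computation to $\bL_{\langle \fa, \dots, \fa(n/d-1), \famm(n/d) \rangle}(\Sigma^\mu\cU^*)[s]$. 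Because $\Sigma^\mu\cU^* \in \cA_0$ is right-orthogonal to $\fa(j)$ for $1 \le j \le n-1$ and to $\famm(n/d) \subset \fa(n/d)$ by Fonarev's semiorthogonality, the inner mutations act trivially, leaving $\bL_\fa(\Sigma^\mu\cU^*)[s]$. The final identification $\bL_\fa(\Sigma^\mu\cU^*) = F_\mu^0$ comes from re-reading triangle~\eqref{eq:fmu-right} at $i = 0$: its first term $\rC_\mu^{\ge 0}$ lies in $\famm \subset \fa$ and its third term $F_\mu^0$ lies in $\fa^\perp$ (this stronger orthogonality is established in the proof of Proposition~\ref{proposition:conjectures-implication}, precisely from hypothesis~\eqref{eq:cmu-inclusion}), so the same triangle is the mutation triangle through $\fa$.

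For the Serre functor claim, the two cases together give $\tau_\cR^{n/d}(F_\mu^i) = F_\mu^i[s]$ uniformly in $i$; raising to the $d$-th power, $\tau_\cR^n(F_\mu^i) = F_\mu^i[k(n-k)]$. Since by Theorem~\ref{thm:residul-polarization} the residual polarization $\tau_\cR$ has index $n$ on $\cR$ with Serre shift equal to $\dim \G(k,n) = k(n-k)$, the defining relation $\tau_\cR^n \cong \SS_\cR^{-1}[k(n-k)]$ yields $\SS_\cR(F_\mu^i) \cong F_\mu^i$. The main subtlety of the whole argument is the promotion of the formula $F_\mu^0 = \bL_{\famm}(\Sigma^\mu\cU^*)$ from~\eqref{eq:cone} to $F_\mu^0 = \bL_\fa(\Sigma^\mu\cU^*)$, which relies on the nontrivial (and hypothesis-dependent) fact that $F_\mu^0$ is right-orthogonal to all of $\fa$ and not merely to its subcategory $\famm$.
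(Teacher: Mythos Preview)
Your proof is correct and follows essentially the same route as the paper's: compute $\tau_\cR(F_\mu^i)$ via~\eqref{eq:cone}, compose mutations, and for $i=n/d-1$ use the triangle~\eqref{eq:cmu} twisted by $\cO(n/d)$ together with~\eqref{eq:cmu-inclusion} to replace $\Sigma^\mu\cU^*(n/d)$ by $\Sigma^\mu\cU^*[k(n-k)/d]$ inside the mutation, then strip off the trivial inner mutations. The one place you differ is the final identification $\bL_\fa(\Sigma^\mu\cU^*)=F_\mu^0$: the paper invokes Theorem~\ref{theorem:kapranov} to see that $\Sigma^\mu\cU^*$ is right-orthogonal to the generators of $\fa$ not lying in $\famm$, whence $\bL_\fa(\Sigma^\mu\cU^*)=\bL_{\famm}(\Sigma^\mu\cU^*)=F_\mu^0$; you instead reuse the inclusion $F_\mu^0\in\fa^\perp$ already established in the proof of Proposition~\ref{proposition:conjectures-implication} to recognize~\eqref{eq:fmu-right} at $i=0$ as the mutation triangle through $\fa$. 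Both arguments are valid and essentially equivalent.
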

\begin{proof}
We have
\begin{multline*}
\tau_\cR(F_\mu^i) =
\bL_\fa(F_\mu^i(1)) \cong
\bL_\fa(\bL_{\langle \fa, \dots, \fa(i-1), \famm(i) \rangle}(\Sigma^\mu\cU^*(i))(1)) \\ \cong
\bL_\fa(\bL_{\langle \fa(1), \dots, \fa(i), \famm(i+1) \rangle}(\Sigma^\mu\cU^*(i+1))) \cong
\bL_{\langle \fa, \fa(1), \dots, \fa(i), \famm(i+1) \rangle}(\Sigma^\mu\cU^*(i+1)),
\end{multline*}
the first equality is the definition of $\tau_\cR$,
the second is~\eqref{eq:cone},
the third is a standard property of mutation functors (\cite[Lemma~2.5]{KP}),
and the last is evident.
When $0 \le i \le n/d - 2$, the right side equals~$F_\mu^{i+1}$ by~\eqref{eq:cone},
hence we have $\tau_\cR(F_\mu^i) = F_\mu^{i+1}$.

Now, assume $i = n/d - 1$.
Then
\begin{equation*}
\tau_\cR(F_\mu^{n/d-1}) = \bL_{\langle \fa, \fa(1), \dots, \famm(n/d) \rangle}(\Sigma^\mu\cU^*(n/d)).
\end{equation*}
Applying the mutation functor $\bL_{\langle \fa, \fa(1), \dots, \famm(n/d) \rangle}$
to the defining triangle~\eqref{eq:cmu} of $\rC_\mu$ twisted by~$\cO(n/d)$, and using~\eqref{eq:cmu-inclusion},
also twisted by~$\cO(n/d)$, we deduce that
\begin{equation*}
\bL_{\langle \fa, \fa(1), \dots, \famm(n/d) \rangle}(\Sigma^\mu\cU^*(n/d))
\cong
\bL_{\langle \fa, \fa(1), \dots, \famm(n/d) \rangle}(\Sigma^\mu\cU^*[k(n-k)/d]).
\end{equation*}
It remains to note that $\Sigma^\mu\cU^*$ is orthogonal to $\langle \fa(1), \dots, \famm(n/d) \rangle$
by~\eqref{Eq.: SOD Fonarev I} (note that we have $n/d < n$), hence
\begin{multline*}
\bL_{\langle \fa, \fa(1), \dots, \famm(n/d) \rangle}(\Sigma^\mu\cU^*[k(n-k)/d])
\cong \bL_{\fa}(\Sigma^\mu\cU^*[k(n-k)/d]) \\
\cong \bL_{\famm}(\Sigma^\mu\cU^*[k(n-k)/d])
\cong F_\mu^0[k(n-k)/d]
\end{multline*}
(by Theorem~\ref{theorem:kapranov} the bundle $\Sigma^\mu\cU^*$ is orthogonal to all exceptional objects
generating~$\fa$ that are not contained in $\famm$, hence the second isomorphism above).
This shows that $\tau_\cR(F_\mu^{n/d-1}) \cong F_\mu^0[k(n-k)/d]$ and
completes the proof of the first part of the proposition.

For the second part, note that the composition of the Serre functor of $\G(k,n)$ with the~$n$-th power
of the twist by $\cO(1)$ is isomorphic to the shift by the dimension $k(n-k)$ of~$\G(k,n)$.
Therefore, by Theorem~\ref{thm:residul-polarization} we have
\begin{equation*}
\tau_\cR^n \cong \SS_\cR^{-1}[k(n-k)].
\end{equation*}
Since $n/d$ divides $n$, the left hand side acts on $F_\mu^i$ as the shift by $d \cdot k(n-k)/d = k(n-k)$,
hence the Serre functor $\SS_\cR$ acts identically.
\end{proof}

\begin{cor}
\label{corollary:rkn-orthogonal}
If Conjecture~\textup{\ref{conjecture:technical}} holds, then the residual category $\cR_{k,n}$ has a completely orthogonal decomposition
\begin{equation*}
\cR_{k,n} = \cC_{k,n} \oplus \Big\langle F_\mu^i \Big\rangle,
\end{equation*}
where the second summand is generated by the completely orthogonal exceptional sequence of objects $F_\mu^i$ of length~\eqref{eq:rkn-number}.
\end{cor}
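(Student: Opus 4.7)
The plan is to derive the corollary directly from the two preceding propositions combined with Serre duality. Proposition \ref{proposition:conjectures-implication} already provides a semiorthogonal decomposition
\[
\cR_{k,n} = \big\langle \cC_{k,n}, \{F_\mu^i\}\big\rangle
\]
in which the $F_\mu^i$ form a completely orthogonal exceptional sequence of length $R_{k,n}$, as counted by Lemma \ref{lemma:rkn}. What remains is only to upgrade the one-sided semiorthogonality between $\cC_{k,n}$ and the admissible subcategory $\langle F_\mu^i\rangle$ into two-sided orthogonality, so that the semiorthogonal decomposition becomes a direct sum decomposition.

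The key step is to invoke Proposition \ref{proposition:serre-rkn}, which gives $\SS_{\cR_{k,n}}(F_\mu^i) \cong F_\mu^i$ for each generator. This implies that the Serre functor of $\cR_{k,n}$ sends $\langle F_\mu^i\rangle$ into itself, and since it is an autoequivalence of the ambient category it restricts to an autoequivalence of $\langle F_\mu^i\rangle$; in particular, the map $F \mapsto \SS_{\cR_{k,n}}^{-1}(F)$ is a bijection on isomorphism classes of objects of $\langle F_\mu^i\rangle$. A direct application of Serre duality in $\cR_{k,n}$ then yields
\[
\Hom^\bullet(G, F) \cong \Hom^\bullet(\SS_{\cR_{k,n}}^{-1}(F), G)^\vee
\]
for every $G \in \cC_{k,n}$ and $F \in \langle F_\mu^i\rangle$. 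Since $\SS_{\cR_{k,n}}^{-1}(F)$ still lies in $\langle F_\mu^i\rangle$, the right-hand side vanishes by the semiorthogonality already built into the decomposition, hence so does the left-hand side. Combined with the original vanishing $\Hom^\bullet(\langle F_\mu^i\rangle, \cC_{k,n}) = 0$, this establishes complete orthogonality and therefore the direct sum decomposition $\cR_{k,n} = \cC_{k,n} \oplus \langle F_\mu^i\rangle$.

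I do not anticipate a significant obstacle in this argument: all technical content has been absorbed into Propositions \ref{proposition:conjectures-implication} and \ref{proposition:serre-rkn}, and the remaining passage from semiorthogonality to complete orthogonality is a formal Serre-duality manipulation. The only point requiring a moment's care is that $\SS_{\cR_{k,n}}$ genuinely preserves $\langle F_\mu^i\rangle$, but this is automatic once one knows that it fixes every member of a generating set up to isomorphism.
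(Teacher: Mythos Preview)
Your proposal is correct and follows essentially the same approach as the paper: invoke Proposition~\ref{proposition:conjectures-implication} for the semiorthogonal decomposition, then use $\SS_{\cR_{k,n}}(F_\mu^i)\cong F_\mu^i$ from Proposition~\ref{proposition:serre-rkn} together with Serre duality to flip the remaining $\Hom$-vanishing. The paper applies the Serre-duality step directly to the generators $F_\mu^i$ rather than passing through the subcategory $\langle F_\mu^i\rangle$, but this is only a cosmetic difference.
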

\begin{proof}
The objects $F_\mu^i$ form a completely orthogonal exceptional collection by Proposition~\ref{proposition:conjectures-implication} and the
subcategories $\cC_{k,n}$ and $\big\langle F_\mu^i \big\rangle$ are semiorthogonal by definition.
On the other hand, for any object $F \in \cC_{k,n}$ we have
\begin{equation*}
\Ext^\bullet(F,F_\mu^i) \cong \Ext^\bullet(\SS_\cR^{-1}(F_\mu^i),F) \cong \Ext^\bullet(F_\mu^i,F) = 0,
\end{equation*}
the second holds by Proposition~\ref{proposition:serre-rkn} and the last equality holds by semiorthogonality mentioned above.
Therefore, the decomposition is completely orthogonal.
\end{proof}

\begin{rmk}
If one assumes that~\eqref{eq:cmu-inclusion} holds for all $\mu \in \sYmu_{k,n}$ (not only for short ones),
then Conjecture~\ref{conjecture:ckn-0} would also follow.
Indeed, using~\eqref{eq:cmu-inclusion} and~\eqref{eq:cmu} twisted by $\cO(i)$
for each minimal upper triangular diagrams $\lambda$ with $o(\lambda) = n$
we can check that $\fa(i-n)$ is contained in the subcategory $\cA \subset \Db(\G(k,n))$
generated by~\eqref{Eq.: SOD Fonarev I} for every $0 \le i \le n-1$.
Repeating the same argument for short diagrams, we check that $\cA(-n) \subset \cA$.
Iterating the above construction, we conclude that $\cA(-tn) \subset \cA$ for each $t \ge 0$.
In particular, we see that $\cO(-tn) \in \cA$ for each $t \ge 0$.
But then $\cC_{k,n} = \cA^\perp \subset \{ \cO(-tn) \}_{t \ge 0}^\perp = 0$,
where the last equality follows from ampleness of the line bundles sequence~$\cO(tn)$.
\end{rmk}

\subsection{Staircase complexes and proof of Theorem~\ref{Theorem}}

The goal of this section is to prove Theorem~\ref{Theorem}. By Corollary~\ref{corollary:rkn-orthogonal} it is enough to check that~\eqref{eq:cmu-inclusion} holds
for each short diagram $\mu \in \sYmu_{p,pm}$, and to check that the corresponding objects $F_\mu^i$ are shifts of vector bundles.
We start by proving~\eqref{eq:cmu-inclusion} in a slightly more general situation.

Assume that $k$ divides $n$, i.e., $n = km$, and define the Young diagram
\begin{equation}
\label{Eq.: Tau}
\theta_{k,km} = ((k-1)(m-1), (k-2)(m-1), \dots, (m-1), 0 ) \in \sY_{k,km}.
\end{equation}
Clearly, $o(\theta_{k,km}) = m$, so $\theta_{k,km}$ is a short diagram.
We will prove that~\eqref{eq:cmu-inclusion} holds for~\mbox{$\mu = \theta_{k,km}$}.
For this we use the \emph{staircase complexes} defined by Fonarev.
Recall the notation of Section~\ref{subsection:fonarev}, especially the cyclic action $\lambda \mapsto \lambda'$
of the group~$\bZ/n\bZ$, see~\eqref{Eq.: Cyclic action II}.
For convenience we introduce the following notation
\begin{equation*}
\lambda(t) := (\lambda_1 + t,\lambda_2 + t, \dots, \lambda_k + t),
\qquad
\Sigma^{\lambda(t)}\cU^* := \Sigma^\lambda\cU^* \otimes \cO(t),
\end{equation*}
where $\lambda$ is a Young diagram and $t$ is an integer (possibly negative).

\begin{prop}[{\cite[Proposition~5.3]{Fo}}]
\label{Prop.: Staircase complex}
Let $\lambda {} = (\lambda_1,\dots,\lambda_k) \in  \sY_{k,n}$ be a Young diagram with \mbox{$\lambda_1=n-k$}.
There exists an exact sequence of vector bundles
\begin{equation}
\label{Eq.: Staircase complex general}
0 \to \Sigma^{\lambda'(-1)} \cU^* \to \Lambda^{\num_{n-k}} V^* \otimes \Sigma^{\mu_{n-k}} \cU^*  \to   \dots
\to \Lambda^{\num_1} V^* \otimes \Sigma^{\mu_1} \cU^* \to   \Sigma^\lambda \cU^* \to 0,
\end{equation}
where the integers $0 < \num_i < n$ and the Young diagrams $\mu_i \in \sY_{k,n}$ are described below.
\end{prop}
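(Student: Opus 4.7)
The plan is to construct the staircase complex by pushing forward a Koszul complex from an appropriate partial flag variety and identifying the terms via the Borel--Bott--Weil theorem.

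First, I would work with the incidence variety $\operatorname{Fl}(1,k;n)$ viewed as the projective bundle $\bP_{\G(k,n)}(\cU)$, with structure map $q\colon \operatorname{Fl}(1,k;n) \to \G(k,n)$ and relative tautological line bundle $\cL = \cO_{\bP(\cU)}(-1)$. Crucially, $\cL$ is also identified with the pullback of $\cO_{\bP(V)}(-1)$ along the other projection $\operatorname{Fl}(1,k;n) \to \bP(V)$, so the twists $\cL^{-c}$ fit into natural Koszul-type complexes on $\bP(V)$ whose terms are trivial bundles $\Lambda^{\bullet} V^*$ tensored with further twists of $\cL$.

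Second, I would twist such a Koszul complex by $q^*\Sigma^{\lambda'}\cU^*$, where $\lambda'$ is the cyclically shifted partition, and then apply $Rq_*$. By the projection formula, the $c$-th term pushes forward to $\Lambda^c V^* \otimes Rq_*\bigl(\cL^{-c} \otimes q^*\Sigma^{\lambda'}\cU^*\bigr)$, and by fiberwise Borel--Bott--Weil on the $\bP^{k-1}$-fibers of $q$ each such pushforward is either zero or a single Schur functor $\Sigma^{\mu}\cU^*$ living in one cohomological degree. The condition $\lambda_1 = n-k$ is exactly what makes only two cohomological degrees contribute overall: degree zero, producing the right half of the complex ending in $\Sigma^\lambda\cU^*$ via an appropriate Pieri-type isomorphism, and degree $k-1$ (the top cohomology of $\bP^{k-1}$), producing the single term $\Sigma^{\lambda'(-1)}\cU^*$ at the left end after the corresponding BBW Weyl reflection; the extra twist by $\cO(-1)$ emerges as the Serre twist on the fiber $\bP^{k-1}$.

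Third, exactness of the resulting complex on $\G(k,n)$ would be read off from exactness of the twisted Koszul complex on $\operatorname{Fl}(1,k;n)$ by a hypercohomology argument: since $Rq_*$ of each term is concentrated in a single cohomological degree and these degrees are coherent with the Koszul differentials, the associated spectral sequence degenerates and the complex on $\G(k,n)$ inherits exactness from upstairs.

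The main obstacle is the combinatorial identification of the integers $\num_i$ and diagrams $\mu_i$, i.e., determining precisely which intermediate twists $c \in \{1,\dots,n-1\}$ produce a nonvanishing $Rq_*$ and what the resulting Young diagrams are. This reduces to a careful $GL_k$ weight calculation against the half-sum of positive roots: for each $c$ one writes down the weight of $\cL^{-c} \otimes q^*\Sigma^{\lambda'}\cU^*$ on the $\bP^{k-1}$-fibers and decides whether it is dominant, singular (contributing zero), or needs to be reflected. It is this weight analysis that pins down the explicit $\num_i$ and $\mu_i$ referenced at the end of the statement, and verifying that the differentials induced from the Koszul complex are nonzero at each stage is where the bulk of the bookkeeping lies.
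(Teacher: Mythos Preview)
The paper does not prove this proposition at all; it is quoted from Fonarev \cite[Proposition~5.3]{Fo}, and only the combinatorial description of the $\mu_i$ and $c_i$ is recalled. So there is nothing to compare against in the paper itself, and your proposal is in effect a sketch of Fonarev's argument.

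Your overall framework --- push forward a twisted Koszul complex from $\operatorname{Fl}(1,k;n)=\bP_{\G(k,n)}(\cU)$ and use relative Borel--Bott--Weil on the $\bP^{k-1}$-fibres --- is indeed the standard route and is essentially what Fonarev does. However, there is a genuine error in your choice of twist. You propose twisting the pulled-back Koszul complex by $q^*\Sigma^{\lambda'}\cU^*$. But this bundle is a pullback from $\G(k,n)$, so by the projection formula
\[
Rq_*\bigl(\cL^{-c}\otimes q^*\Sigma^{\lambda'}\cU^*\bigr)\ \cong\ \Sigma^{\lambda'}\cU^*\otimes Rq_*(\cL^{-c})\ \cong\ \Sigma^{\lambda'}\cU^*\otimes S^c\cU^*\quad(c\ge 0),
\]
and this tensor product is \emph{not} a single Schur functor: it decomposes by the Pieri rule into several $\Sigma^\mu\cU^*$. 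So the claim ``each such pushforward is either zero or a single Schur functor'' fails, and you will not obtain the staircase complex in the stated form.

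The fix is to twist not by a pullback but by a genuinely relative bundle. On $\operatorname{Fl}(1,k;n)$ one has the rank-$(k-1)$ bundle $\cU/\cL$, and the correct twist is
\[
\Sigma^{(\lambda_2,\dots,\lambda_k)}(\cU/\cL)^*\otimes \cL^{-(n-k)}.
\]
The term $\cL^{-(n-k)+c}\otimes \Sigma^{(\lambda_2,\dots,\lambda_k)}(\cU/\cL)^*$ has Levi weight $(n-k-c,\lambda_2,\dots,\lambda_k)$ for $\GL_1\times\GL_{k-1}\subset\GL_k$, and now relative BBW genuinely yields a single $\Sigma^{\mu}\cU^*$ (or zero) in a single cohomological degree. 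One then checks: for $c=0$ the weight is $(n-k,\lambda_2,\dots,\lambda_k)=\lambda$, giving $\Sigma^\lambda\cU^*$ in degree~$0$; exactly $k-1$ values of $c$ are singular (those with $n-k-c=\lambda_j-j+1$ for some $j\ge 2$); and for $c=n$ the dotted weight sorts to $(\lambda_2-1,\dots,\lambda_k-1,-1)=\lambda'(-1)$ after $k-1$ reflections, producing $\Sigma^{\lambda'(-1)}\cU^*$ in degree $k-1$. This accounts for the $n-k+2$ nonzero terms of the staircase complex and pins down the $\mu_i$ and $c_i$ exactly as described after the proposition. With this correction the rest of your outline (spectral sequence degeneration, exactness) goes through.
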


Represent $\lambda$ as a path going from the lower-left corner of the $k$ by $(n-k)$ rectangle to the upper-right corner.
Further, do the same for $\lambda'(-1)$ (starting one step to the left from the lower-left corner of the rectangle).
The two paths form a stripe of width 1.

The diagram $\mu_i$ in~\eqref{Eq.: Staircase complex general} corresponds to the path
that coincides with the path of $\lambda$ until the point with abscissa $n-k-i$
and then ``jumps'' upward  onto the path of $\lambda'(-1)$.
The number $\num_i$ is the number of boxes one needs to remove from $\lambda$ to get $\mu_i$.

\begin{example}
Let $k=4$, $n=13$, $\lambda=(9,8,5,2)$, so that $\lambda'(-1)= (7,4,1,-1)$.
Picturing the path of $\lambda$ in green and that of $\lambda'(-1)$ in red we obtain
\begin{equation*}
\begin{tikzpicture}[scale=0.5]

\draw (0,1) -- (9,1) -- (9,5) -- (0,5) -- (0,1);

\draw[dotted] (0,2) -- (9,2);
\draw[dotted] (0,3) -- (9,3);
\draw[dotted] (0,4) -- (9,4);
\draw[dotted] (0,5) -- (9,5);

\draw[dotted] (1,5) -- (1,1);
\draw[dotted] (2,5) -- (2,1);
\draw[dotted] (3,5) -- (3,1);
\draw[dotted] (4,5) -- (4,1);
\draw[dotted] (5,5) -- (5,1);
\draw[dotted] (6,5) -- (6,1);
\draw[dotted] (7,5) -- (7,1);
\draw[dotted] (8,5) -- (8,1);

\draw[green, very thick] (0,1) -- (2,1) -- (2,2) -- (5,2) -- (5,3) -- (8,3) -- (8,4) -- (9,4) -- (9,5);

\draw[red, very thick] (-1,1) -- (-1,2) -- (1,2) -- (1,3) -- (4,3) -- (4,4) -- (7,4) -- (7,5) -- (9,5);
\end{tikzpicture}
\end{equation*}
To get $\mu_5$ one jumps from the green path to the red one at the point with abscissa $4$:
\begin{equation*}
\begin{tikzpicture}[scale=0.5]

\draw (0,1) -- (9,1) -- (9,5) -- (0,5) -- (0,1);
\draw[dashed] (4,0.5) -- (4,5.5);

\draw[dotted] (0,2) -- (9,2);
\draw[dotted] (0,3) -- (9,3);
\draw[dotted] (0,4) -- (9,4);
\draw[dotted] (0,5) -- (9,5);

\draw[dotted] (1,5) -- (1,1);
\draw[dotted] (2,5) -- (2,1);
\draw[dotted] (3,5) -- (3,1);
\draw[dotted] (4,5) -- (4,1);
\draw[dotted] (5,5) -- (5,1);
\draw[dotted] (6,5) -- (6,1);
\draw[dotted] (7,5) -- (7,1);
\draw[dotted] (8,5) -- (8,1);

\draw[green, very thick] (0,1) -- (2,1) -- (2,2) -- (5,2) -- (5,3) -- (8,3) -- (8,4) -- (9,4) -- (9,5);

\draw[red, very thick] (-1,1) -- (-1,2) -- (1,2) -- (1,3) -- (4,3) -- (4,4) -- (7,4) -- (7,5) -- (9,5);

\draw[ultra thick] (0,1) -- (2,1) -- (2,2) -- (4,2) -- (4,4)  -- (7,4) -- (7,5) -- (9,5);

\fill[fill=gray, opacity=0.5]  (4,2)  -- (5,2) -- (5,3) -- (8,3) -- (8,4) -- (9,4) -- (9,5) -- (7,5) -- (7,4) -- (4,4) -- (4,2);
\end{tikzpicture}
\end{equation*}
The black path gives $\mu_5 = (7,4,4,2)$. The gray boxes represent the difference between $\lambda$ and~$\mu_5$, and so we have $\num_5 = 7$.
\end{example}

Recall that $\theta_{k,km}$ is defined by~\eqref{Eq.: Tau}. Using the staircase complex~\eqref{Eq.: Staircase complex general}, we deduce the following.

\begin{lemma}
\label{lemma:theta-km}
For each $k$ and $m$ there exists an exact sequence on $\G(k,km)$
\begin{multline}
\label{Eq.: Staircase complex special}
0 \to \Sigma^{\theta_{k,km}} \cU^*(-m) \to \Lambda^{\num_{k(m-1)}} V^* \otimes \Sigma^{\alpha_{k(m-1)}} \cU^*(1-m)  \to  \dots \to
\\
\to \Lambda^{\num_{(k-1)(m-1)+1}} V^* \otimes \Sigma^{\alpha_{(k-1)(m-1)+1}} \cU^*(-1)
\to \Lambda^{\num_{(k-1)(m-1)}} V^* \otimes \Sigma^{\alpha_{(k-1)(m-1)}} \cU^*   \to \qquad\
\\
\to \dots  \to \Lambda^{\num_1} V^* \otimes \Sigma^{\alpha_1} \cU^* \to   \Sigma^{\theta_{k,km}} \cU^* \to 0,
\end{multline}
with $\alpha_i \in \sYmu_{k,km}$ and $0 < \num_i < n$. In particular, the inclusion~\eqref{eq:cmu-inclusion} holds for $\mu = \theta_{k,km}$.
\end{lemma}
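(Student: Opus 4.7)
The plan is to apply Proposition~\ref{Prop.: Staircase complex} to the partition $\lambda := \theta_{k,km} + (m-1)\cdot(1,\ldots,1)$, whose entries $\lambda_i = (k-i+1)(m-1)$ satisfy $\lambda_1 = k(m-1) = n-k$ as the hypothesis of the proposition requires. The second case of the cyclic rule~\eqref{Eq.: Cyclic action II} then gives $\lambda' = (\lambda_2, \ldots, \lambda_k, 0) = \theta_{k,km}$, so that $\Sigma^{\lambda'(-1)}\cU^* \cong \Sigma^{\theta_{k,km}}\cU^*(-1)$ and $\Sigma^\lambda \cU^* \cong \Sigma^{\theta_{k,km}}\cU^*(m-1)$. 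Twisting the resulting staircase complex by $\cO(1-m)$ immediately produces an exact sequence of length $n - k = k(m-1)$ whose left end is $\Sigma^{\theta_{k,km}}\cU^*(-m)$ and whose right end is $\Sigma^{\theta_{k,km}}\cU^*$, as desired.

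Next we analyze the intermediate terms $\Lambda^{c_i} V^* \otimes \Sigma^{\mu_i}\cU^*(1-m)$. Both $\lambda$ and $\theta_{k,km}(-1)$ are regular staircases of step $m-1$ differing by a diagonal unit: $\lambda$'s up-steps sit at abscissas $m-1, 2(m-1), \ldots, k(m-1)$, while $\theta_{k,km}(-1)$'s up-steps sit at $-1, m-2, \ldots, (k-1)(m-1)-1$. The jump construction then shows that $\mu_i$ has bottom row $\mu_{i,k} = \min(n-k-i,\, m-1)$. Setting $\alpha_i := \mu_i - \mu_{i,k}(1,\ldots,1)$ and $t_i := \mu_{i,k} + 1 - m$ gives the rewriting $\Sigma^{\mu_i}\cU^*(1-m) \cong \Sigma^{\alpha_i}\cU^*(t_i)$, which yields $(k-1)(m-1)$ terms at twist $t_i = 0$ (for $i \leq (k-1)(m-1)$) and exactly one term at each of the twists $-1, -2, \ldots, 1-m$ (for $i > (k-1)(m-1)$), matching the layout in the displayed sequence. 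Positivity of $c_i$ is part of Proposition~\ref{Prop.: Staircase complex}, and $c_i < n$ follows from the size of the path region.

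The combinatorial heart of the argument is to verify $\alpha_i \in \sYmu_{k,km}$. Upper triangularity follows from a direct check on the positions of the vertical steps of $\mu_i$'s path, which interleave those of $\lambda$ and $\theta_{k,km}(-1)$ in a controlled way; lex-minimality in the cyclic orbit is then read off by inspecting where those vertical steps land after the subtraction of $\mu_{i,k}$. A useful observation for the twist-zero terms is that $\alpha_i \subsetneq \theta_{k,km}$ automatically: since $\mu_i \subsetneq \lambda$ whenever $c_i > 0$, subtracting $(m-1)(1,\ldots,1) = \lambda - \theta_{k,km}$ from both sides preserves the strict inclusion. In particular $\alpha_i \neq \theta_{k,km}$, and a short direct computation of $\mu_{i,1}$ for $i > (k-1)(m-1)$ also rules out $\alpha_i = \theta_{k,km}$ at the negative twists. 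When $k = p$ is prime, $\theta_{k,km}$ is the unique short minimal upper triangular diagram, so every $\alpha_i \neq \theta_{k,km}$ automatically has full orbit; this places each twist-zero term in $\famm$ and each other middle term in $\fa(t_i)$.

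Finally, to deduce the inclusion~\eqref{eq:cmu-inclusion} we observe that the twisted exact sequence represents the class in $\Ext^{k(m-1)}(\Sigma^{\theta_{k,km}}\cU^*, \Sigma^{\theta_{k,km}}\cU^*(-m))$ that defines $\rC_{\theta_{k,km}}$ via~\eqref{eq:cmu} (with $d = k$, giving $n/d = m$ and $k(n-k)/d = k(m-1)$). Hence $\rC_{\theta_{k,km}}$ is realized as the iterated cone of the middle terms, each of which lies in one of $\famm$ (at twist $0$) or $\fa(t_i)$ (at twists $t_i < 0$). This gives $\rC_{\theta_{k,km}} \in \langle \fa(1-m), \ldots, \fa(-1), \famm \rangle \subset \langle \famp(-m), \fa(1-m), \ldots, \fa(-1), \famm \rangle$, which is~\eqref{eq:cmu-inclusion}. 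The main obstacle is the path-analysis verifying that each $\alpha_i$ is minimal upper triangular in its cyclic orbit---this is a routine but bookkeeping-intensive case analysis on the interleaved staircases of $\lambda$ and $\theta_{k,km}(-1)$.
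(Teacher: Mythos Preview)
Your approach is the same as the paper's: apply Proposition~\ref{Prop.: Staircase complex} to $\lambda = \theta_{k,km}(m-1)$ and twist back by $\cO(1-m)$.

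The paper's combinatorial verification is shorter than the path analysis you anticipate. Rather than tracking interleaved staircases, the paper observes that every $\alpha_i$ is a subdiagram of $\theta_{k,km}$: for $1\le i\le (k-1)(m-1)$ one obtains $\alpha_i$ from $\theta_{k,km}$ by removing boxes along its border stripe, and for larger $i$ by removing some of its first columns. Upper-triangularity is then automatic (a subdiagram of an upper-triangular diagram is upper-triangular), and lex-minimality in the orbit follows because the first row of $\alpha_i$ is strictly shorter than that of every other element of its cyclic orbit. You already noticed the containment $\alpha_i \subsetneq \theta_{k,km}$ for the twist-zero terms, so you are one step away from this cleaner argument.

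Your final paragraph restricts the deduction of~\eqref{eq:cmu-inclusion} to the case $k = p$ prime, invoking that $\theta_{p,pm}$ is then the unique short diagram so that $\alpha_i \ne \theta_{p,pm}$ forces $o(\alpha_i) = n$. The lemma, however, is stated for all $k$; to place the middle terms in $\fa(t_i)$ or $\fa_{\theta_{k,km}}^-$ one needs the $\alpha_i$ to have full orbit, and for composite $k$ there are short minimal upper-triangular diagrams other than $\theta_{k,km}$ (for instance $(2,2,0,0)\in\sYmu_{4,8}$), so your shortcut does not extend. This is a gap relative to the lemma's full statement, though only the prime case is actually invoked later in the proof of Theorem~\ref{Theorem}.
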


\begin{proof}
We consider the staricase complex~\eqref{Eq.: Staircase complex general} for the diagram $\lambda = \theta_{k,km}(m-1)$
(the twist is necessary to satisfy the condition $\lambda_1 = n - k = k(m-1)$)
and then twist it back by $\cO(1-m)$.

We have
\begin{equation*}
\begin{aligned}
& \alpha_1 &&=&& \mu_1(1-m) \\
& \dots \\
&  \alpha_{(k-1)(m-1)} &&=&& \mu_{(k-1)(m-1)}(1-m) \\
&  \alpha_{(k-1)(m-1)+1} &&=&& \mu_{(k-1)(m-1)+1}(2-m) \\
& \dots \\
&  \alpha_{k(m-1)} &&=&& \mu_{k(m-1)},
\end{aligned}
\end{equation*}
and to finish the proof of the first part of the lemma we need to show that all diagrams~$\alpha_i$ are contained in  $\sYmu_{k,km}$.

For this we just note that for $1 \le i \le (k-1)(m-1)$ the diagram $\alpha_i$ is obtained from~$\theta_{k,km}$
by removing some boxes from the stripe of width 1 going along its border,
while for~$(k-1)(m-1) < i \le k(m-1)$ the diagram $\alpha_i$ is obtained from $\theta_{k,km}$
by removing several of its first columns.
In particular, $\alpha_i {}\subseteq{} \theta_{k,km}$ for each $i$, hence is upper-triangular.

To show that $\alpha_i$ is minimal in its cyclic orbit, just note that the first row of $\alpha_i$ has length less than $(k-1)m$
and the first row of any other (not necessarily upper-triangular) Young diagram in the orbit of $\alpha_i$
has length  greater or equal than $(k-1)m$.

Let us show that~\eqref{eq:cmu-inclusion} holds for $\mu = \theta_{k,km}$.
Comparing the definition of the object~$\rC_{\theta_{k,km}}$ in~\eqref{eq:cmu} with the staircase complex~\eqref{Eq.: Staircase complex special}
we see that $\rC_{\theta_{k,km}}$ is quasiisomorphic to the complex
\begin{multline*}
\Big\{
\Lambda^{\num_{k(m-1)}} V^* \otimes \Sigma^{\alpha_{k(m-1)}} \cU^*(1-m)  \to  \dots \to
\Lambda^{\num_{(k-1)(m-1)+1}} V^* \otimes \Sigma^{\alpha_{(k-1)(m-1)+1}} \cU^*(-1) \to \\
\to \Lambda^{\num_{(k-1)(m-1)}} V^* \otimes \Sigma^{\alpha_{(k-1)(m-1)}} \cU^*
\to \dots  \to \Lambda^{\num_1} V^* \otimes \Sigma^{\alpha_1} \cU^*
\Big\}
\end{multline*}
(that is obtained from~\eqref{Eq.: Staircase complex special} by dropping the first and the last terms).
The terms of its first line are contained in the subcategories $\fa(1-m)$, \dots, $\fa(-1)$
since the corresponding Young diagrams $\alpha_i$ are minimal upper triangular,
and the terms in the second line are all contained in $\fa_{\theta_{k,km}}^-$,
since all of them are obtained from $\theta_{k,km}$ by removing some boxes.
\end{proof}

Now we can give a proof of Theorem~\ref{Theorem}.

\begin{proof}[Proof of Theorem~\textup{\ref{Theorem}}]
First note that $\theta_{p,pm}$ is the only short diagram in $\sY_{p,pm}$;
indeed, we have $o(\theta_{p,pm}) = m$ and at the same time $R_{p,pm} = m$ by~\eqref{eq:rkn-number}.
Moreover, the inclusion~\eqref{eq:cmu-inclusion} holds for $\theta_{p,pm}$ by Lemma~\ref{lemma:theta-km}.
So, the first part of the theorem follows from Corollary~\ref{corollary:rkn-orthogonal}.

Since the action of the polarization of the residual category is described in Proposition~\ref{proposition:serre-rkn},
to finish the proof of the theorem it remains to show that the objects $F_{\theta_{p,pm}}^i$, where $0 \le i < m$,
that form the completely orthogonal exceptional collections in $\cR_{p,pm}$, are shifts of vector bundles.
For this just note, that the defining triangle~\eqref{eq:fmu-right} for these objects shows that
$F_{\theta_{p,pm}}^i$ is quasiisomorphic to the complex
\begin{multline*}
\Big\{
\Lambda^{\num_{(p-1)(m-1)+i}} V^* \otimes \Sigma^{\alpha_{(p-1)(m-1)+i}} \cU^*  \to  \dots \to
\Lambda^{\num_{(p-1)(m-1)+1}} V^* \otimes \Sigma^{\alpha_{(p-1)(m-1)+1}} \cU^*(i-1) \to \\
\to \Lambda^{\num_{(p-1)(m-1)}} V^* \otimes \Sigma^{\alpha_{(p-1)(m-1)}} \cU^*(i)
\to \dots  \to \Lambda^{\num_1} V^* \otimes \Sigma^{\alpha_1} \cU^*(i)
\to \Sigma^{\theta_{p,pm}} \cU^*(i)
\Big\}.
\end{multline*}
This complex is a truncation of the exact sequence~\eqref{Eq.: Staircase complex special}, hence
its only cohomology sheaf is in the leftmost term, and the complex gives a right locally free resolution for this cohomology sheaf.
Therefore, this cohomology sheaf is a vector bundle, hence $F_{\theta_{p,pm}}^i$ is a shift of a vector bundle.
\end{proof}

\begin{rmk}
\label{remark:g48}
Let us sketch a description of the residual category $\cR_{4,8}$ for the Grassmannian $\G(4,8)$.
It is easy to see that the set $\sY_{4,8}$ contains only two short diagrams: $\theta_{4,8} = (3,2,1,0)$ and $(2,2,0,0)$.
Since the inclusion~\eqref{eq:cmu-inclusion} is proved for $\theta_{4,8}$ in Lemma~\ref{lemma:theta-km},
it remains to prove~\eqref{eq:cmu-inclusion} for $\mu = (2,2,0,0)$.
To abbreviate notation we trim zeros at the end of Young diagrams;
for instance we write $(2,2)$ instead of $(2,2,0,0)$.

Combining the self-dual exact sequence
\begin{multline*}
0
\to  \Sigma^{(2,2)} \cU(-2)
\to V \otimes \Sigma^{(2,1)} \cU (-2)
\to   S^2 V \otimes \Lambda^2 \cU (-2) \oplus \Lambda^2 V \otimes  S^2 \cU(-2)  \\
\to \Sigma^{(2,1)} V \otimes \cU(-2)
\to  \Sigma^{(2,2)} V \otimes \cO(-2)
\to \Sigma^{(2,2)} V^* \otimes \cO
\to \Sigma^{(2,1)} V^* \otimes \cU^* \\
\to  S^2V^* \otimes \Lambda^2 \cU^* \oplus \Lambda^2 V^* \otimes S^2 \cU^*
\to V^* \otimes  \Sigma^{(2,1)} \cU^*
\to  \Sigma^{(2,2) }\cU^*  \to 0,
\end{multline*}
with the self-dual exact sequence
\begin{multline*}
0 \to S^2\cU(-2) \to V \otimes \cU(-2) \to \Lambda^2 V \otimes \cO(-2) \\
\to \Lambda^2V^* \otimes \cO(-1) \to V^* \otimes \cU^*(-1) \to S^2\cU^*(-1) \to 0
\end{multline*}
tensored by $\Lambda^2 V$, and using natural identifications
\begin{align*}
\Sigma^{(2,2)} \cU(-2) &\cong \Sigma^{(2,2)} \cU^*(-4),
&
\Sigma^{(2,1)} \cU (-2) &\cong \Sigma^{(2,2,1)} \cU^* (-4), \\
\Lambda^2 \cU (-2) &\cong \Lambda^2 \cU^* (-3),
&
\cU(-2) &\cong \Lambda^3 \cU^*(-3),
\end{align*}
we obtain an exact sequence
\begin{equation*}
\begin{aligned}
0 \to  \Sigma^{(2,2)} \cU^*(-4)
& \to V \otimes \Sigma^{(2,2,1)} \cU^* (-4) \\
& \to   S^2 V \otimes \Lambda^2 \cU^* (-3) \oplus \Lambda^3 V\otimes  \Lambda^3 \cU^*(-3)  \\
& \to V \otimes \Lambda^3 V \otimes \cO(-2)  \\
& \to  \Lambda^2 V^* \otimes \Lambda^2 V \otimes \cO(-1) \\
& \to  \Lambda^2 V \otimes V^* \otimes \cU^*(-1) \oplus \Sigma^{(2,2)} V^* \otimes \cO  \\
& \to \Lambda^2 V \otimes S^2\cU^*(-1) \oplus  \Sigma^{(2,1)} V^* \otimes \cU^* \\
& \to  S^2V^* \otimes \Lambda^2 \cU^* \oplus \Lambda^2 V^* \otimes S^2 \cU^* \\
& \to V^* \otimes  \Sigma^{(2,1)} \cU^* \\
& \to  \Sigma^{(2,2)}\cU^*  \to 0,
\end{aligned}
\end{equation*}
which shows that~\eqref{eq:cmu-inclusion} holds for $\mu = (2,2)$.
This proves Conjecture~\ref{conjecture:technical} for $\G(4,8)$
and by Corollary~\ref{corollary:rkn-orthogonal} gives a description of its residual category.

Similar complexes can be constructed in the case of $\G(6,12)$. However, they are too long and complicated to be written here.
We expect that these complexes would lead to a proof of Conjecture~\ref{conjecture:technical} for~$\G(6,12)$,
but we haven't verified all details.
\end{rmk}

\appendix

\section{Fullness for $\G(3,3m)$}
\label{Sec.: Completeness for G(3,3m)}

The goal of this appendix is to establish fullness of Fonarev's collection on $\G(3,n)$.
Recall that the case of $n$ coprime to~3 is covered by Theorem~\ref{Thm.: SOD Fonarev},
so we assume $n = 3m$.

\begin{prop}
\label{Completeness G(3,3m)}
On $\G(3,3m)$ the collection~\eqref{Eq.: SOD Fonarev I} is full.
In other words, the phantom category $\cC_{3,3m}$ vanishes.
\end{prop}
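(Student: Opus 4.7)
The plan is to show that every Kapranov bundle $\Sigma^\lambda \cU^*$ for $\lambda \in \sY_{3,3m}$ lies in the subcategory $\cA \subseteq \Db(\G(3,3m))$ generated by the Fonarev collection~\eqref{Eq.: SOD Fonarev I}; since the Kapranov bundles form a full exceptional collection (Theorem~\ref{theorem:kapranov}), this will give $\cA = \Db(\G(3,3m))$ and therefore $\cC_{3,3m} = 0$.

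First I would analyse the orbit structure of $\sY_{3,3m}$ under the cyclic action. Since $\gcd(3,3m) = 3$, each orbit has length either $3m$ (free) or $m$ (short). A direct count via the binary-word bijection used in the proof of Lemma~\ref{lemma:rkn} shows there is exactly one short orbit, namely the orbit of $\theta = \theta_{3,3m} = (2m-2,\, m-1,\, 0)$, whose $m$ elements are $\theta(i)$ for $0 \le i \le m-1$. The corresponding bundles $\Sigma^{\theta(i)} \cU^* = \Sigma^\theta \cU^*(i)$ are Fonarev generators sitting in the blocks $\cA_i(i)$, so the Kapranov bundles associated with the short orbit lie in $\cA$ automatically.

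Next, for a free orbit with minimal upper triangular representative $\mu$ (so $o(\mu) = 3m$), the bundles $\Sigma^\mu \cU^*(i)$ for $0 \le i \le 3m-1$ are Fonarev generators in the successive blocks $\cA_i(i)$. To establish $\Sigma^{\mu^{(j)}} \cU^* \in \cA$ for every orbit member $\mu^{(j)}$, I would iterate along the orbit using Fonarev's staircase complex (Proposition~\ref{Prop.: Staircase complex}) at each wrap step. Applied at a wrap point $\lambda = \mu^{(j-1)}$ with $\lambda_1 = 3m-3$ and suitably twisted by $\cO(1)$, the staircase complex yields an exact sequence with $\Sigma^{\mu^{(j)}} \cU^*$ at the left end, $\Sigma^{\mu^{(j-1)}} \cU^*(1)$ at the right end, and intermediate terms $\Lambda^{\num_i} V^* \otimes \Sigma^{\nu_i} \cU^*(1)$. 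Since $\Sigma^{\mu^{(j-1)}} \cU^*(1)$ is, up to a further application of the staircase complex, a twist of a Fonarev generator already placed in $\cA$, this reduces the problem to handling the intermediates.

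The main obstacle is the combinatorial verification that every intermediate diagram $\nu_i$ appearing in all such twisted staircase complexes gives a Schur bundle in $\cA$, together with bookkeeping the twists so that the propagation around an orbit closes up consistently. For $k = 3$ the jump-path description of the $\nu_i$ is explicit: each $\nu_i$ has at most three rows and is obtained from $\mu^{(j-1)}$ by removing a prescribed number of boxes along a specific border path, yielding a diagram strictly smaller in size than $\mu^{(j-1)}$; this opens the door to a well-founded induction on $|\mu|$, with the short orbit of $\theta$ and the smallest free orbits as base cases. Combining this inductive combinatorial check with the staircase-complex propagation around each free orbit, every Kapranov bundle lands in $\cA$, and we conclude $\cC_{3,3m} = 0$.
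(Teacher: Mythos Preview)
Your overall strategy---showing that every Kapranov bundle lies in~$\cA$---would certainly prove fullness, but the inductive scheme you sketch does not close. Take the staircase complex at a wrap point $\lambda = \mu^{(j-1)}$ with $\lambda_1 = 3m-3$, twisted by~$\cO(1)$ so that $\Sigma^{\mu^{(j)}}\cU^*$ sits at the left end. The right-end term is $\Sigma^{\mu^{(j-1)}}\cU^*(1)$, whose first row has length $3m-2$; this is \emph{not} a Kapranov bundle, and your claim that it is ``up to a further application of the staircase complex, a twist of a Fonarev generator'' is unjustified---in general the normalized diagram $(\lambda_1-\lambda_3,\lambda_2-\lambda_3,0)$ is not upper triangular. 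Worse, the intermediate terms $\Sigma^{\nu_i(1)}\cU^*$ satisfy $|\nu_i(1)| = |\nu_i|+3$, and since $|\mu^{(j)}| = |\mu^{(j-1)}| - (3m-3)$ while $|\nu_i| \ge |\mu^{(j-1)}| - (3m-2)$, one gets $|\nu_i(1)| > |\mu^{(j)}|$. So your ``induction on~$|\mu|$'' runs the wrong way: you are expressing a small diagram in terms of larger ones. Equivalently, if you work with the untwisted staircase for~$\lambda$, the intermediates $\Sigma^{\nu_i}\cU^*$ are fine by size-induction, but the left-end term $\Sigma^{\lambda'}\cU^*(-1)$ carries a negative twist, and knowing $\Sigma^{\lambda'}\cU^* \in \cA$ does not give $\Sigma^{\lambda'}\cU^*(-1) \in \cA$.

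The paper sidesteps this twist-bookkeeping problem by invoking \cite[Theorem~4.1]{Fo}, which reduces fullness to the statement that $\Sigma^\lambda\cU^*(t) \in \cA$ for every \emph{upper triangular} $\lambda$ and every $0 \le t \le 3m-1$. This allows one to work with all twists at once, rather than just the untwisted Kapranov bundles. Since minimal upper triangular diagrams are handled directly, only the $m-1$ non-minimal ones $(2(m-1),i,0)$ with $0 \le i \le m-2$ remain. For these the paper writes down explicit staircase complexes and proves a tailored inductive lemma (on the second row of certain auxiliary diagrams $(a,b,0)$ with $b \ge m$ and $a-b \le m-1$) that keeps all twists in the window $[0,3m-1]$. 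The point is that the inductive quantity is chosen so that the staircase produces \emph{strictly smaller} instances after normalization of twists---something your size-induction on Kapranov diagrams cannot achieve.
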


We start with a couple of lemmas.

\begin{lemma}\label{Lemma G(3,3m) staicase for non-minimal}
$(i)$ Non-minimal upper triangular diagrams in $\sY_{3,3m}$ are given by
\begin{equation*}
\lambda_i= (2(m-1),i,0) \quad \text{for} \quad 0 \leq  i \leq m-2.
\end{equation*}

\noindent$(ii)$ For any $\lambda$ as above there is a staircase complex
\begin{multline*}
0 \to \Sigma^{(m-1+i,m-1,0)} \cU^*(-m) \to \dots \to \Lambda^{\num_{2(m-1)+1}} V^* \otimes \Sigma^{\mu_{2(m-1)+1}} \cU^*(-1) \to  \\
\to \Lambda^{\num_{2(m-1)}} V^* \otimes \Sigma^{\mu_{2(m-1)}} \cU^* \to  \dots  \to V^* \otimes \Sigma^{\mu_1} \cU^* \to
\Sigma^{\lambda} \cU^* \to 0,
\end{multline*}
where all $\mu_i$, as well as the leftmost diagram $(m-1+i,m-1,0)$, are in $\sYmu_{3,3m}$.
\end{lemma}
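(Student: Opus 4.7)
My plan for the proof of Lemma~\ref{Lemma G(3,3m) staicase for non-minimal} is as follows.

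For part (i), I would classify non-minimal upper triangular diagrams by direct analysis of the cyclic action \eqref{Eq.: Cyclic action II}. Any $\lambda \in \sYu_{3,3m}$ has $\lambda_3 = 0$, $\lambda_2 \le m - 1$, and $\lambda_1 \le 2(m-1)$. Iterating rule~1 of \eqref{Eq.: Cyclic action II} from $\lambda$ increases all entries by $1$ until $\lambda_1 + j$ reaches $n - k = 3(m-1)$, at which point rule~2 collapses the triple. A direct computation shows the orbit returns to $\sYu_{3,3m}$ after this step if and only if $\lambda_1 = 2(m-1)$, and in that case the next upper triangular representative in the orbit equals $(\lambda_2 + m - 1,\, m - 1,\, 0)$. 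Comparing lexicographically with $\lambda$, the diagram $(2(m-1), \lambda_2, 0)$ fails to be the lex-minimum of its orbit precisely when $\lambda_2 \le m - 2$; the boundary case $\lambda_2 = m - 1$ recovers $\theta_{3,3m}$, which is minimal (as the unique upper triangular representative of its short orbit). This gives exactly the list $\lambda_i = (2(m-1), i, 0)$ for $0 \le i \le m - 2$.

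For part (ii), I would apply Fonarev's staircase complex (Proposition~\ref{Prop.: Staircase complex}) to the twisted diagram $\lambda_i(m-1) = (3(m-1),\, i + m - 1,\, m - 1)$, whose first row equals $n - k$. Its cyclic image is $(\lambda_i(m-1))' = (i + m - 1, m - 1, 0)$, and the resulting exact sequence has leftmost term $\Sigma^{(i+m-2,\, m-2,\, -1)}\cU^*$ and rightmost term $\Sigma^{\lambda_i(m-1)}\cU^*$. Tensoring the whole sequence with $\cO(1 - m)$ and using $\Sigma^{(a,b,c)}\cU^* \otimes \cO(t) = \Sigma^{(a+t,b+t,c+t)}\cU^*$ (extracting suitable factors of $\cO$ whenever the last entry becomes negative), the leftmost term rewrites as $\Sigma^{(i + m - 1,\, m - 1,\, 0)}\cU^*(-m)$ and the rightmost as $\Sigma^{\lambda_i}\cU^*$, matching the statement.

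For the intermediate diagrams $\mu_j$ produced by Proposition~\ref{Prop.: Staircase complex}, I would use the geometric description: the path of $\mu_j$ coincides with the path of $\lambda_i(m-1)$ up to abscissa $n - k - j$ and then jumps up to the path of $(\lambda_i(m-1))'(-1)$. An explicit computation shows that the bottom row of $\mu_j$ equals $m - 1$ as long as the jump occurs on the topmost horizontal segment of $\lambda_i(m-1)$'s path (corresponding to $j \le 2(m-1)$) and drops to $0$ otherwise. Under the untwist by $\cO(1-m)$ this dichotomy produces exactly the two twist levels $\cO(0)$ and $\cO(-1)$ stated in the lemma, with the break precisely at $j = 2(m-1)$. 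To show every untwisted intermediate diagram lies in $\sYmu_{3,3m}$, I would verify upper-triangularity from the fact that the path of $\mu_j$ is sandwiched within the strip bounded by $\lambda_i$ and $(i + m - 1,\, m - 1,\, 0)$, and then invoke part~(i) to reduce minimality to the explicit check that no $\mu_j$ takes the forbidden form $(2(m-1), \ell, 0)$ with $0 \le \ell \le m - 2$ — which follows from the explicit row-length formulas.

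The main obstacle I expect is the combinatorial bookkeeping for the $\mu_j$: confirming the precise breakpoint at $j = 2(m-1)$, correctly handling the $\cO(-1)$ factor extraction when $(\mu_j)_3 = 0$, and managing the degenerate cases $i = 0$ (where the middle horizontal segment of $\lambda_i(m-1)$'s path has length zero) and the left/right ends of the complex where the path description needs to be adapted. None of these steps is conceptually hard, but each requires careful case analysis of the geometry of two lattice paths forming a strip of width one in the $3 \times (3m-3)$ rectangle.
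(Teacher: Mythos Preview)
The paper's own proof is literally ``The first is evident, the second is straightforward,'' so your detailed outline \emph{is} the proof, and it follows exactly the template used for $\theta_{k,km}$ in Lemma~\ref{lemma:theta-km}.

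One correction to part~(ii): your claim that the untwisted complex exhibits only the two twist levels $\cO(0)$ and $\cO(-1)$ misreads the displayed formula. For $j>2(m-1)$ the jump occurs on the bottom horizontal segment of $\lambda_i(m-1)$'s path, and the (pre-untwist) staircase diagram is
\[
\mu_j=(i+m-2,\ m-2,\ 3(m-1)-j),
\]
so after tensoring by $\cO(1-m)$ the residual twist is $\cO(2(m-1)-j)$, which runs from $\cO(-1)$ at $j=2m-1$ down to $\cO(1-m)$ at $j=3(m-1)$; the lemma's ellipsis between the leftmost term and the $(2(m-1)+1)$-st term hides these intermediate twists (compare the analogous range $0,\dots,1-m$ in Lemma~\ref{lemma:theta-km}). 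This does not disturb your verification of the main claim: the resulting untwisted diagrams $(i+j-2m+1,\ j-2m+1,\ 0)$ are strictly upper triangular and hence lie in $\sYmu_{3,3m}$, and the check for $j\le 2(m-1)$ goes through as you describe.
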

\begin{proof}
The first is evident, the second is straightforward.
\end{proof}

\begin{lemma}\label{Lemma G(3,3m) (a,b,0) induction}
Let $(a,b,0) \in \sY_{3,3m}$ be a Young diagram such that $b \geq m$ and $a-b \leq m-1$.
Then the bundle $\Sigma^{(a,b,0)} \cU^*$ is contained in the subcategory $\big \langle \cA_0 (b-m+1) , \dots , \cA_0 (b+1) \big \rangle $,
where $\cA_0$ was defined in \eqref{Eq.: Subcategories A_i}.
\end{lemma}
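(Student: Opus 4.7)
The plan is to apply the staircase complex of Proposition~\ref{Prop.: Staircase complex} to the Young diagram $\lambda = (3m-3, a, b) \in \sY_{3,3m}$. Its first part equals $n - k = 3m - 3$, so the complex applies and its leftmost term is $\Sigma^{\lambda'(-1)}\cU^* = \Sigma^{(a,b,0)}\cU^*(-1)$. Tensoring by $\cO(1)$ yields a right resolution
\begin{equation*}
0 \to \Sigma^{(a,b,0)}\cU^* \to \Lambda^{\num_{3m-3}} V^* \otimes \Sigma^{\mu_{3m-3}}\cU^*(1) \to \dots \to V^* \otimes \Sigma^{\mu_1}\cU^*(1) \to \Sigma^{(3m-3,a,b)}\cU^*(1) \to 0,
\end{equation*}
so it suffices to show that every intermediate term $\Sigma^{\mu_i}\cU^*(1)$ and the rightmost term $\Sigma^{(3m-3,a,b)}\cU^*(1)$ lies in $\langle \cA_0(b-m+1), \dots, \cA_0(b+1)\rangle$. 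The overall argument proceeds by induction on $b$, the base case being $b = m$.

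\textbf{The first two ranges and the rightmost term.} The path description splits the index $i$ into three natural ranges according to which horizontal segment of the $\lambda$-path the jump lies on: $\mu_i = (3m-3-i, a, b)$ for $1 \le i \le 3m-3-a$, $\mu_i = (a-1, 3m-3-i, b)$ for $3m-2-a \le i \le 3m-3-b$, and $\mu_i = (a-1, b-1, 3m-3-i)$ for $3m-2-b \le i \le 3m-3$. In the first two ranges I rewrite $\Sigma^{\mu_i + (1,1,1)}\cU^* = \Sigma^{\nu}\cU^*(b+1)$ with $\nu$ having third part zero, and verify that $\nu$ is minimal upper triangular in $\sY_{3,3m}$; the hypothesis $a - b \le m - 1$ is what excludes the only possible non-minimal form $(2m-2, j, 0)$, so the term lies in $\cA_0(b+1)$. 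The rightmost term rewrites as $\Sigma^{(3m-3-b,\,a-b,\,0)}\cU^*(b+1)$, and the diagram $(3m-3-b, a-b, 0)$ is minimal upper triangular for exactly the same reason, placing it in $\cA_0(b+1)$.

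\textbf{The third range and induction.} In the third range set $t := 3m-2-i \in [1, b]$, so that $\Sigma^{\mu_i}\cU^*(1) = \Sigma^{(a-t,\,b-t,\,0)}\cU^*(t)$. If $t \ge b - m + 1$ then $(a-t, b-t, 0)$ is minimal upper triangular by the same argument, so the term lies in $\cA_0(t) \subseteq \langle \cA_0(b-m+1), \dots, \cA_0(b+1)\rangle$. Otherwise $t \le b - m$, and $(a-t, b-t, 0)$ satisfies the hypotheses of the lemma with the strictly smaller value $b' = b - t \ge m$ and the same $a'- b' = a - b$; the inductive hypothesis places $\Sigma^{(a-t,\,b-t,\,0)}\cU^* \in \langle \cA_0(b'-m+1), \dots, \cA_0(b'+1)\rangle$, and twisting by $\cO(t)$ lands $\Sigma^{\mu_i}\cU^*(1)$ inside $\langle \cA_0(b-m+1), \dots, \cA_0(b+1)\rangle$.

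\textbf{Main obstacle.} The technical core is the combinatorial case analysis of the three forms of $\mu_i$ together with the verification that after extracting the determinant factor the resulting Young diagram is minimal upper triangular with the twist exponent inside the narrow window $[b-m+1, b+1]$. The constraint $a - b \le m - 1$ is essential throughout: it excludes the non-minimal upper triangular shape $(2m-2, j, 0)$ handled by Lemma~\ref{Lemma G(3,3m) staicase for non-minimal}, and it is preserved by the inductive step, where $b$ decreases to $b - t$ while $a - b$ stays fixed.
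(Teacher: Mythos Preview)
Your proof is correct and follows essentially the same approach as the paper: both apply the staircase complex to $\lambda = (3(m-1),a,b)$ and induct on $b$, splitting the intermediate terms $\mu_i$ into those whose normalized diagram is already minimal upper triangular (your first two ranges, the rightmost term, and the part of the third range with $t \ge b-m+1$) versus those requiring the inductive hypothesis (the third range with $t \le b-m$); the paper presents the same split via its tabular ``above/below the double line'' organization, and your twist by $\cO(1)$ is a purely cosmetic normalization. One small expository inaccuracy: in the first range and for the rightmost term it is the hypothesis $b \ge m$ (giving $\nu_1 \le 3m-4-b \le 2m-4$) rather than $a-b \le m-1$ that rules out the non-minimal shape $(2m-2,j,0)$, though both hypotheses are of course in force.
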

\begin{proof}
Consider the staircase complex for $\Sigma^{(3(m-1),a,b)} \cU^*$.
The next table lists the Young diagrams that appear in it;
we distinguish two cases, $a \ne b$ and $a = b$:
\begin{equation*}
\begin{array}{l|l|ll}
			& a \neq b   		& a = b  		\\\hline
\lambda 		& (3(m-1), a, b)     	& (3(m-1), a, a) 	\\
\mu_1 			& (3(m-1)-1, a, b)  	& (3(m-1)-1, a, a) 	\\
\dots			& \dots  		& \dots 		\\
\mu_{3(m-1)-a}		& (a,a,b)  		& (a,a,a) 		\\
\mu_{3(m-1)-a+1}	& (a-1,a-1,b)  		& (a-1,a-1,a-1) 	\\
\mu_{3(m-1)-a+2}	& (a-1,a-2,b)  		& (a-1,a-1,a-2) 	\\
\dots			& \dots   		& \dots 		\\
\mu_{3(m-1)-b}		& (a-1,b,b)  		& \dots 		\\
\mu_{3(m-1)-b+1}	& (a-1,b-1,b-1)  	& \dots 		\\
\mu_{3(m-1)-b+2}	& (a-1,b-1,b-2)  	& \dots 		\\
\dots			& \dots 		& \dots 		\\
\mu_{3(m-1)-b+m}	& (a-1,b-1,b-m) 	& (a-1,a-1,a-m) 	\\\hline\hline
\dots			& \dots 		& \dots 		\\
\lambda'(-1)		& (a-1, b-1, -1)	& (a-1, a-1, -1)
\end{array}
\end{equation*}
Note that for diagrams $\alpha = (\alpha_1,\alpha_2,\alpha_3)$ contained in the part of the table above the double horizontal line
the bundles $\Sigma^\alpha\cU^*$ are contained in $\cA_0(\alpha_3)$.

In the case $b = m$ there is only one row below the double line, namely $\lambda'(-1)$, therefore
\begin{equation*}
\Sigma^{(a,b,0)} \cU^* (-1) = \Sigma^{\lambda'(-1)}\cU^* \in \big \langle \cA_0 (b-m), \dots , \cA_0(b) \big \rangle,
\end{equation*}
which is equivalent to the original claim of the lemma.

Next, we argue by induction on $b$, taking the case $b = m$ as the base.
So, assume that the statement of the lemma is known for any  $\Sigma^{(a',b',0)} \cU^*$ with $m \leq b' < b$.
Now we look at the diagrams in the staircase complex that lie below the double line in the table.
As they are of the form
\begin{equation*}
(a-1, b-1, b-m-j)  \quad \text{for} \quad 1 \leq j \leq b-m,
\end{equation*}
by the induction hypothesis we conclude that
\begin{align*}
\Sigma^{(a-1, b-1, b-m-j)} \cU^* \in \big \langle \cA_0 (b-m), \dots , \cA_0(b) \big \rangle.
\end{align*}
This gives the induction step and finishes the proof.
\end{proof}

\begin{proof}[Proof of Proposition~\textup{\ref{Completeness G(3,3m)}}]
By \cite[Theorem 4.1]{Fo} it is enough to show that for any $\lambda \in \sYu_{3,3m}$ we have
\begin{align*}
\Sigma^\lambda \cU^*(t)  \in \cA \quad \quad  \text{for} \quad  0 \leq  t \leq 3m-1,
\end{align*}
where $\cA$ is the full triangulated subcategory of $\Db(\G(3,3m))$ generated by the Fonarev's collection
\begin{equation*}
\cA= \Big \langle \cA_0 , \cA_1(1) , \dots , \cA_{3m-1}(3m-1) \Big \rangle.
\end{equation*}

Lemma \ref{Lemma G(3,3m) staicase for non-minimal}$(i)$ describes non-minimal upper triangular diagrams in $\sY_{3,3m}$.
Thus, we only need to consider bundles
\begin{align*}
\Sigma^{(2(m-1),i,0)} \cU^*(t) \quad \text{for} \quad 0 \leq  i \leq m-2,\quad 0 \le t \le 3m - 1.
\end{align*}
The staircase complex of Lemma \ref{Lemma G(3,3m) staicase for non-minimal}$(ii)$ twisted by $\cO(t)$ for $ m \leq  t \leq 3m-1$ implies
\begin{align*}
\Sigma^{(2(m-1),i,0)} \cU^*(t)  \in \cA \quad \quad  \text{for} \quad  m \leq  t \leq 3m-1.
\end{align*}

To treat the cases with $0 \leq  t \leq m-1$ we consider the staircase complex for the bundle $\Sigma^{(3(m-1),2(m-1),i)} \cU^*$.
The next table lists the Young diagrams that appear in it:
\begin{equation*}
\begin{array}{l|l}
\lambda			& (3(m-1),2(m-1),i) 	\\
\mu_1			& (3(m-1)-1,2(m-1),i) 	\\
\dots 			& \dots 		\\
\mu_{m-1}		& (2(m-1),2(m-1),i) 	\\
\mu_m			& (2(m-1)-1,2(m-1)-1,i) \\
\mu_{m+1}		& (2(m-1)-1,2(m-1)-2,i) \\
\dots 			& \dots 		\\\hline\hline
\mu_{2(m-1)-i}		& (2(m-1)-1,m-1+i,i) 	\\
\dots 			& \dots 		\\
\mu_{3(m-1)-i}		& (2(m-1)-1,i,i) 	\\
\mu_{3(m-1)-i+1}	& (2(m-1)-1,i-1,i-1) 	\\
\mu_{3(m-1)-i+2}	& (2(m-1)-1,i-1,i-2) 	\\
\dots 			& \dots 		\\\hline\hline
\lambda'(-1)		& (2(m-1)-1,i-1,-1)
\end{array}
\end{equation*}
Note that for diagrams $\alpha = (\alpha_1,\alpha_2,\alpha_3)$ contained in the part of the table between the double horizontal lines
the bundles $\Sigma^\alpha\cU^*$ are contained in $\cA_0(\alpha_3)$.
Thus, all of them together are contained in  $\big \langle \cA_0, \dots , \cA_0(i) \big \rangle$.

On the other hand, for the diagrams contained in the part of the table above the upper double horizontal line
the conditions of Lemma~\ref{Lemma G(3,3m) (a,b,0) induction} are satisfied,
and we conclude that all of them together are contained in $\big \langle \cA_0(i+1), \dots , \cA_0(2(m-1)+1) \big \rangle$.
Therefore, we obtain
\begin{equation*}
\Sigma^{(2(m-1),i,0)} \cU^*(-1)  \in \big \langle \cA_0 , \dots , \cA_0(2(m-1)+1) \big \rangle.
\end{equation*}
Twisting the above inclusion we obtain
\begin{equation*}
\Sigma^{(2(m-1),i,0)} \cU^*(t)  \in \cA \quad \quad  \text{for} \quad  0 \leq  t \leq m - 1.
\end{equation*}
This finishes the proof of Proposition \ref{Completeness G(3,3m)}.
\end{proof}

\bibliographystyle{plain}
\bibliography{refs-new}

\end{document}